 \newtheorem{theoA}{Theorem}
\newtheorem{propA}[theoA]{Proposition}
\newtheorem{lemA}[theoA]{Lemma} 
\newtheorem{coroA}[theoA]{Corollary}
\renewcommand{\theequation}{\thesection.\arabic{equation}}
\newtheorem{thm}{Theorem} 
\newtheorem{lem}{Lemma}
\newtheorem{prop}{Proposition} 
\newtheorem{coro}{Corollary}
\newtheorem{rmk}{Remark}
\newtheorem{expl}{Example}
\newtheorem{notation}{Notations} 
\declaretheoremstyle[notefont=\bfseries,notebraces={}{},%
headpunct={},postheadspace=1em]{mystyle}
\declaretheorem[style=mystyle,numbered=no,name=Theorem]{thm-hand}
\newcommand{\eqnsection}{
  \renewcommand{\theequation}{\thesection.\arabic{equation}}
  \makeatletter \csname @addtoreset\endcsname{equation}{section}
  \makeatother} \eqnsection
\declaretheoremstyle[notefont=\bfseries,notebraces={}{},%
headpunct={},postheadspace=1em]{mystyle}
\DeclareMathOperator{\trace}{Trace}
\def\P{{\mathbb{P}}} \def\E{{\mathbb{E}}} \def\R{{\mathbb{R}}}
\def\N{{\mathbb{N}}}  \def\demi{{1\over 2}}
 \def\B{{\mathbb{B}}} \def\fff{{\mathcal F}}
\def\aaa{{\mathcal A}}
\newtheorem{lem}{Lemma} \newtheorem{thm}{Theorem}%[section]
 \newtheorem{coro}{Corollary}%[section]
\newtheorem{prop}{Proposition}%[section]
\newtheorem{rmk}{Remarks}%[section]
\newtheorem{theoA}{Theorem}
\declaretheoremstyle[notefont=\bfseries,notebraces={}{},%
headpunct={},postheadspace=1em]{mystyle}
\declaretheorem[style=mystyle,numbered=no,name=Theorem]{thm-hand}
\begin{document}
\date{}
% \title{A multivariate generalization of the relation between inverse
% Gaussian laws and hitting times of Brownian motions}
\title[Hitting times of interacting drifted BM and the VRJP]{Hitting
  times of interacting drifted Brownian motions and the Vertex
  Reinforced Jump Process} \author[C. Sabot]{Christophe SABOT}
\address{Universit\'e de Lyon, Universit\'e Lyon 1, Institut Camille
  Jordan, CNRS UMR 5208, 43, Boulevard du 11 novembre 1918, 69622
  Villeurbanne Cedex, France} \email{sabot@math.univ-lyon1.fr}
\author[X. Zeng]{Xiaolin Zeng} \thanks{This work was supported by the LABEX MILYON (ANR-10-LABX-0070) of Universit\'e de Lyon, within the program "Investissements d'Avenir" (ANR-11-IDEX-0007) operated by the French National Research Agency (ANR), and by the ANR/FNS project MALIN (ANR-16-CE93-0003). The second author is supported
  by ERC Starting Grant 678520.}  \address{School of
  mathematics, Tel aviv university, 108 Schreiber building,
  P.O.B. 39040, Ramat aviv, Tel aviv 69978, Israel}
\email{xiaolinzeng@mail.tau.ac.il}
\begin{abstract}
Consider a negatively drifted one dimensional Brownian motion starting
at positive initial position, its first hitting time to 0 has the
inverse Gaussian law. Moreover, conditionally on this hitting time,
the Brownian motion up to that time has the law of a 3-dimensional
Bessel bridge. In this paper, we give a generalization of this result
to a family of Brownian motions with interacting drifts, indexed by
the vertices of a conductance network. The hitting times are equal in
law to the inverse of a random potential that appears in the analysis
of a self-interacting process called the Vertex Reinforced Jump
Process (\cite{STZ15,SZ15}). These Brownian motions with interacting drifts
have remarkable properties with respect to restriction and
conditioning, showing hidden Markov properties. This family of
processes are closely related to the martingale
that plays a crucial role in the analysis of the vertex reinforced
jump process and edge reinforced random walk (\cite{SZ15}) on infinite
graphs.
\end{abstract}
\maketitle
\section{Introduction}
We first recall some classic facts about hitting times of standard
Brownian motion.  Let \((B_t)_{t\ge 0}\) be a standard Brownian motion
and
\[
X(t)= \theta+ B(t),
\]
be a Brownian motion starting from initial position \(\theta>0\). It
is well-known that the first hitting time of 0
\begin{equation}\label{Brownian}
T=\inf\{t\ge 0, \; X(t)=0\}
\end{equation}
has the law of the inverse of a Gamma random variable with parameter
\((\demi, {\theta^2\over 2})\).  Moreover, conditionally on \(T\),
\((X_{t})_{0\le t\le T}\) has the law of a 3-dimensional Bessel bridge
from \(\theta\) to \(0\) on time interval \([0,T]\)\footnote{The 3-dimensional Bessel bridge from \(\theta\) to 0 on
  time interval \([0,T]\) can be represented by the following S.D.E. 
  \[
  X(t)= \theta+B(t)+\int_0^t\left({1\over X(s)}-{X(s)\over T
      -s}\right)ds, \ 0\le t\le T
  \]
}.  More generally, if
\begin{equation}\label{drifted}
X(t)= \theta+ B(t)-\eta t,
\end{equation}
is a drifted Brownian motion with negative drift \(-\eta<0\) starting
at \(\theta>0\), then \(T\) has the inverse Gaussian distribution
% \footnote{The inverse Gaussian law
% with parameter \((\mu,\lambda)\), is the distribution on \(\R_+\)
% with density
% \[
% f(t)=\left({\lambda\over 2\pi x^3}\right)^\demi
% \exp\left(-{\lambda\left(x-\mu\right)^2\over 2\mu^2x}\right)
% \mathds{1}_{x>0} dx
% \]
% }
with parameters \(({\theta\over \eta}, {\theta^2})\), i.e. \(T\) has
density
\[
f(t)=\frac{\theta}{\sqrt{2\pi t^{3}}}\exp\left(-\demi\left({\theta^2\over t}+{\eta^2 t}-2\eta\theta\right)
\right) \mathds{1}_{t>0} dt.
\]
Moreover, conditionally on \(T\), \((X_{t})_{0\le t\le T}\) has the law of a
3-dimensional Bessel bridge from \(\theta\) to \(0\) on time interval
\([0,T]\). (See
\cite{MR0350881}, Theorem 3.1, or \cite{Revuz-Yor}, p.~317 Corollary~4.6, and \cite{MR0375485,MR1117781} for complements)

This paper aims at giving a generalization of these statements on a conductance network, namely for a family of Brownian motions with interacting drifts indexed by the vertices
of the network.  The distribution of hitting times of these
processes will be given by a multivariate exponential family of
distributions introduced by Sabot, Tarr\`es and Zeng~\cite{STZ15}, and generalized in \cite{Letac,letac2017multivariate}, which
appeared in the context of self-interacting processes and random
Schr\"odinger operators. This family of distributions is also intimately
related to the supersymmetric hyperbolic sigma model introduced by
Zirnbauer~\cite{Zirnbauer91} and investigated by Disertori, Spencer,
Zirnbauer~\cite{DSZ06,DS10}, and plays a crucial role in the
analysis of the edge reinforced random walk (ERRW) and the vertex
reinforced jump process (VRJP)~\cite{ST15,disertori2014transience,SZ15}.

The generalization of the one dimensional statement presented in this introduction
% representation of the exponential family of \cite{STZ15} by a S.D.E
was hinted by the martingales that appear in \cite{SZ15}. This martingale has played
an important role in the analysis of the ERRW and the VRJP on infinite
graphs. In Section \ref{link_VRJP}, we
explain the relations between the stochastic differential equations
(S.D.E.s) defined in this paper and the VRJP and in Section~\ref{sec_martingale} we relate the martingales that appear in the study of VRJP to the S.D.E.s.

Note that the computations done in this paper seem to have many similarities with computations done for exponential functional of the Brownian motion in dimension one
(see in particular Matsumoto, Yor~\cite{MR2203675,MR2203676,MR1833628}). More precisely, it would be possible to write an analogue of the Lamperti transformation that changes
the S.D.E. \eqref{SDE0} presented below in its exponential functional counterpart with $\mu=\demi$ (see the Matsumoto Yor opposite drift theorem \cite{MR1833628}): the counterpart of the representation of Theorem~\ref{thm_main1} would correspond to a representation of the S.D.E. with 
a Brownian motions with opposite drifts as in \cite{MR1833628}. In fact, in dimension one (i.e. one vertex), the Inverse Gaussian distribution corresponds to \(\mu=\frac{1}{2}\), and the Generalized Inverse Gaussian (GIG) distribution corresponds to general \(\mu\in \mathbb{R}\), see \cite{barndorff1978hyperbolic} and \cite{MR1117781}. On a conductance network (i.e. multidimensional), the case \(\mu=\frac{1}{2}\) can be carried out by explicit computation, for general \(\mu\), one will have to use Bessel K functions as normalizing constant. We plan to develop these aspects in a further work.

It might not be a coincidence that the GIG distribution was initially called generalized hyperbolic distribution, and the distribution we considered here stems from a supersymmetric hyperbolic sigma model, where one considered spin systems with spins taking values on a super hyperbolic space. Interested readers can check \cite{barndorff1978hyperbolic} and \cite{spencer2012susy} for more details.

Another related direction goes back to Vallois, where GIG is conceived as the exit law of some one dimensional diffusion. In \cite{MR3418540}, Chhaibi explicitly computed the exit law of certain hypoelliptic Brownian motion on a solvable Lie group, where e.g. he recovered the Matsumoto Yor opposite drift theorem, by taking the group to be $\mathfrak{sl}_{2}$. It is very likely that there is a connection with our work. Note also that the integral of a geometric Brownian motion is closely related to the study of Asian option. At last, some related open questions are listed in Section 4.5 of \cite{letac2017multivariate}.

\section{Statement of the main results}
\subsection{The multivariate generalization of inverse Gaussian law :
  the random potential associated with the VRJP}
Let \(N\) be a positive integer and \(V=\{1, \ldots, N\}\). Given a symmetric matrix
\[
W=(W_{i,j})_{i,j=1, \ldots, N}
\]
with non negative coefficients \(W_{i,j}=W_{j,i}\ge 0\).  We denote by
\(\mathcal{G}=(V,E)\) the associated graph with:
\[V=\{1 ,\ldots,N\} \text{ and }E=\{\{i,j\},\; i\neq j, \; W_{i,j}>0\}.
\]
We always assume that the matrix \(W\) is irreducible, i.e. the
graph \(\mathcal{G}\) is connected.  If \((\beta_i)_{i\in V}\) is a vector indexed by the vertices, we set
\begin{equation}\label{Hbeta}
H_\beta= 2\beta -W,
\end{equation}
where \(2\beta\) represents the operator of multiplication by the
vector \((2\beta_i)\) (or equivalently the diagonal matrix with
diagonal coefficients \((2\beta_i)_{i\in V}\)).  We always write
\(H_\beta>0\) to mean that \(H_\beta\) is positive definite.  Remark
that when \(H_\beta>0\), all the entries of
\((H_\beta)^{-1}\) are positive (since \(\mathcal{G}\) is connected and \(H_{\beta}\) is an M-matrix, see e.g.  \cite{berman1994nonnegative}, Proposition 3).

The following distribution was introduced in \cite{STZ15}, and
generalized in \cite{Letac,letac2017multivariate}.
\begin{lemA}\label{lem_beta}
Let \((\theta_i)_{i\in V}\in (\R_+^*)^V\) be a positive vector indexed
by \(V\). Let \((\eta_i)_{i\in V}\in (\R_+)^V\) be a non negative
vector indexed by \(V\).  The measure
\begin{equation} \label{eq:1} \nu_V^{W,\theta,\eta}(d\beta):=
\mathds{1}_{H_{\beta}>0}\left(\frac{2}{\pi}\right)^{|V|/2}\exp\left(-\demi\left<\theta,
    H_\beta \theta\right> -\demi\left<\eta, H_\beta^{-1}
    \eta\right>+\left<\eta,\theta\right>\right)\frac{\prod_{i\in
    V}{{\theta_i}}}{\sqrt{\det H_{\beta}}} d\beta
\end{equation}
is a probability distribution on \(\R^V\), where
\( \mathds{1}_{H_{\beta}>0}\) is the indicator function that the
operator \(H_\beta\) (defined in \eqref{Hbeta}) is positive definite, \(\left< \cdot,\cdot \right>\) is the usual inner product on \(\mathbb{R}^{V}\), and
\(d\beta=\prod_{i\in V} d\beta_i\).  When \(\eta=0\), we simply write
\(\nu^{W,\theta}_V\) for \(\nu^{W,\theta,0}_V\).

Moreover, the Laplace transform of (\ref{eq:1}) is explicitly given by
\begin{equation}
\label{eq:2}
\int e^{-\left<\lambda, \beta\right>} \nu^{W,\theta,\eta}_V(d\beta) =
e^{-\demi\left<\sqrt{\theta^2+\lambda},W\sqrt{\theta^2+\lambda}\right>+\demi\left<\theta,W\theta\right>
  +\left<\eta, \theta-\sqrt{\theta^2+\lambda}\right>}\prod_{i\in V}\frac{\theta_i}{\sqrt{\theta_i^2+\lambda_{i}}}
% \exp\left(-\demi\sum_{i\in V}\sum_{j\in
%   V}W_{i,j}\left(\sqrt{(\theta_i^2+\lambda_{i})(\theta_i^2+\lambda_{j})}-\theta_i\theta_j\right)\right)\prod_{V}\frac{1}{\sqrt{\theta_i^2+\lambda_{i}}}.
\end{equation}
for all $(\lambda_i)_{i\in V}$ such that \(\lambda_{i}+\theta_{i}^{2}> 0\),  \(\forall i\in V\).
%, and where \(\theta-\sqrt{\theta^{2}+\lambda}=(\theta_{i}-\sqrt{\theta_{i}^{2}+\lambda_{i}})_{i\in V}\).
\end{lemA}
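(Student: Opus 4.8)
The plan is to reduce the Laplace transform identity \eqref{eq:2} to the single assertion that $\nu_V^{W,\theta,\eta}$ is a probability measure, and to prove the latter by induction on $N=|V|$ by integrating out one vertex at a time.

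For the reduction, fix $\lambda$ with $\lambda_i+\theta_i^2>0$ and set $\gamma_i:=\sqrt{\theta_i^2+\lambda_i}>0$. Because $H_\beta=2\beta-W$, one has $\langle\lambda,\beta\rangle+\demi\langle\theta,H_\beta\theta\rangle=\sum_i\beta_i\gamma_i^2-\demi\langle\theta,W\theta\rangle=\demi\langle\gamma,H_\beta\gamma\rangle+\demi\langle\gamma,W\gamma\rangle-\demi\langle\theta,W\theta\rangle$, so the density of $e^{-\langle\lambda,\beta\rangle}\nu_V^{W,\theta,\eta}(d\beta)$ equals that of $\nu_V^{W,\gamma,\eta}(d\beta)$ times the \emph{$\beta$-independent} factor $e^{\demi\langle\theta,W\theta\rangle-\demi\langle\gamma,W\gamma\rangle+\langle\eta,\theta-\gamma\rangle}\prod_i(\theta_i/\gamma_i)$. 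Integrating and using $\int\nu_V^{W,\gamma,\eta}=1$ yields exactly the right-hand side of \eqref{eq:2}; taking $\lambda=0$ recovers the normalization. (All these integrals are finite because $\theta_i>0$ forces $\langle\theta,H_\beta\theta\rangle\to+\infty$ as any coordinate $\beta_i\to+\infty$ inside $\{H_\beta>0\}$.) Thus it remains to prove $\int\nu_V^{W,\theta,\eta}(d\beta)=1$ for all admissible $(W,\theta,\eta)$.

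For $N=1$ there are no edges, $H_\beta=2\beta_1$, and the claim is the one-dimensional computation $\int_0^\infty\sqrt{2/\pi}\,\theta_1(2\beta_1)^{-1/2}\exp(-\beta_1\theta_1^2-\eta_1^2/(4\beta_1)+\eta_1\theta_1)\,d\beta_1=1$, which follows from the classical identity $\int_0^\infty t^{-1/2}e^{-at-b/t}\,dt=\sqrt{\pi/a}\,e^{-2\sqrt{ab}}$ (equivalently $\beta_1=1/(2T)$ with $T$ inverse Gaussian, as recalled in the introduction). For the inductive step, assume the statement for all graphs on $\le N-1$ vertices --- it is harmless to allow disconnected graphs here, the measure factoring as a product over connected components, or one may simply remove a vertex that is a leaf of a spanning tree. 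Given $|V|=N$, put $V'=V\setminus\{N\}$, let $b:=(W_{iN})_{i\in V'}$, and let $H':=(2\beta-W)|_{V'\times V'}$, $\theta':=\theta|_{V'}$, $\eta':=\eta|_{V'}$. A Schur-complement computation gives $H_\beta>0\iff H'>0$ and $\sigma:=2\beta_N-\langle b,(H')^{-1}b\rangle>0$, together with $\det H_\beta=\sigma\det H'$, $\langle\theta,H_\beta\theta\rangle=\langle\theta',H'\theta'\rangle+\sigma\theta_N^2+\theta_N^2\langle b,(H')^{-1}b\rangle-2\theta_N\langle b,\theta'\rangle$, and $\langle\eta,H_\beta^{-1}\eta\rangle=\langle\eta',(H')^{-1}\eta'\rangle+\sigma^{-1}(\langle\eta',(H')^{-1}b\rangle+\eta_N)^2$. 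Changing variable $\beta_N\mapsto\sigma\in(0,\infty)$ (note $\langle b,(H')^{-1}b\rangle$ does not involve $\beta_N$), the inner integral over $\sigma$ has the base-case shape and, after the same one-dimensional identity, contributes elementary factors in $\theta_N,\eta_N,\langle b,\theta'\rangle$ together with a term $-\demi\theta_N^2\langle b,(H')^{-1}b\rangle$ in the exponent; meanwhile, completing the square $\langle\eta',(H')^{-1}\eta'\rangle+2\theta_N\langle\eta',(H')^{-1}b\rangle=\langle\tilde\eta',(H')^{-1}\tilde\eta'\rangle-\theta_N^2\langle b,(H')^{-1}b\rangle$ with $\tilde\eta':=\eta'+\theta_N b$ contributes $+\demi\theta_N^2\langle b,(H')^{-1}b\rangle$. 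These two $\beta'$-dependent "effective conductance" terms cancel exactly, leaving the remaining $\beta'$-integral equal to $\int\nu_{V'}^{\,W|_{V'\times V'},\,\theta',\,\tilde\eta'}(d\beta')$ --- the conductances are merely restricted, not star-mesh transformed --- which is $1$ by the inductive hypothesis. Bookkeeping the leftover elementary factors (using $\langle\tilde\eta',\theta'\rangle=\langle\eta',\theta'\rangle+\theta_N\langle b,\theta'\rangle$ and the constants $(2/\pi)^{N/2}$, $(\pi/2)^{N/2}$) gives total mass $1$ for $|V|=N$, closing the induction.

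The main obstacle is precisely this computation: producing the correct Schur-complement expressions for $\det H_\beta$, $\langle\theta,H_\beta\theta\rangle$ and $\langle\eta,H_\beta^{-1}\eta\rangle$, and then recognizing the exact cancellation of the $\theta_N^2\langle b,(H')^{-1}b\rangle$ terms --- it is this cancellation that allows the inductive hypothesis to be applied to an honest measure $\nu_{V'}^{\,\cdot}$ with unmodified conductances rather than to some $\beta_N$-dependent perturbation. One must also check the sign in the $\sigma$-integral, i.e.\ that $\langle\eta',(H')^{-1}b\rangle+\eta_N\ge 0$, which uses that $(H')^{-1}$ has nonnegative entries (Proposition 3 of \cite{berman1994nonnegative}) together with $b,\eta'\ge 0$.
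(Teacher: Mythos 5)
Your proof is correct, but it takes a genuinely different route from the paper: the paper does not reprove Lemma~\ref{lem_beta} from scratch, it invokes the case $\theta\equiv 1$ from \cite{SZ15} (third arXiv version, Lemma~3; see also \cite{letac2017multivariate}) and reduces general $\theta$ to it by the change of variables $\beta_i'=\theta_i^2\beta_i$, $W'_{i,j}=\theta_i\theta_j W_{i,j}$, $\eta_i'=\theta_i\eta_i$, whereas you give a self-contained argument. Your two steps both check out: absorbing $e^{-\left<\lambda,\beta\right>}$ into the density with $\gamma=\sqrt{\theta^2+\lambda}$ does reduce \eqref{eq:2} to the normalization statement (for all positive $\gamma$), and the inductive step --- Schur complement at one vertex, the one-dimensional identity $\int_0^\infty t^{-1/2}e^{-at-b/t}dt=\sqrt{\pi/a}\,e^{-2\sqrt{ab}}$, completion of the square with $\tilde\eta'=\eta'+\theta_N b$, and the exact cancellation of the two $\frac{1}{2}\theta_N^2\left<b,(H')^{-1}b\right>$ terms --- is precisely a proof of the one-vertex case of the marginalization Lemma~\ref{lem_marg_cond}~\eqref{item-2}, iterated; this is close in spirit to Letac's proof that the paper cites, and what it buys is independence from external references, at the price of redoing the marginal computation. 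The sign check $\left<\eta',(H')^{-1}b\right>+\eta_N\ge 0$ needed to identify $e^{-2\sqrt{ac}}$ with $e^{-\theta_N(\left<\eta',(H')^{-1}b\right>+\eta_N)}$ is correctly sourced from inverse-M-matrix nonnegativity, and since the integrand is nonnegative, Tonelli legitimizes the iterated integration without the a priori finiteness remark. Two small points to adjust in the write-up: the paper allows $W_{i,i}\ge 0$, so the base case should read $H_\beta=2\beta_1-W_{1,1}$ and the pivot should be $\sigma=2\beta_N-W_{N,N}-\left<b,(H')^{-1}b\right>$ (or first translate the diagonal away, which the paper notes is always possible); and when passing to $W_{V',V'}$ the graph may disconnect, which, as you note, is harmless because the measure factorizes over connected components (or can be avoided by deleting a leaf of a spanning tree).
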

\begin{rmk}
The probability distribution \(\nu_V^{W,\theta,\eta}\) was initially defined in
\cite{STZ15} in the case \(\eta=0\).
%, and it is closely related to the supersymmetric hyperbolic sigma field that appears in \cite{. 
In \cite{Letac, letac2017multivariate}, Letac gave a shorter proof of the fact that \(\nu^{W,\theta}_V\) is a
probability and remarked that the family can be generalized to the
family \(\nu_V^{W,\theta,\eta}\) above.  It appears, see forthcoming
Lemma~\ref{lem_marg_cond}, that the general family \(\nu_V^{W,\theta,\eta}\)
can be obtained from the family  \(\nu_V^{W,\theta}\) by taking marginal laws.
\end{rmk}
\begin{rmk} The definition of \(\nu_V^{W,\theta}\) is not strictly the
same as \(\nu_{V}^{W,\theta}\) in \cite{STZ15}.  Firstly, compared with the definition of
\cite{STZ15}, the parameter \(\theta_i\) above corresponds to
\(\sqrt{\theta_i}\) in \cite{STZ15}. It is in fact simpler to write
the formula as in \eqref{eq:2} since the quadratic form
\(\left<\theta, H_\beta \theta\right>\) appears naturally in the
density and since \(\theta_i\) will play the role of the initial value
in the forthcoming S.D.E.  Secondly, we do not assume here that the
diagonal coefficients of \(W\) are zero. It is obvious that the two
definitions are equivalent up to a translation of \(\beta_i\) by
\(W_{i,i}\). It will be more convenient here to allow this generality.
\end{rmk}
% *********** Voir probl\`eme avec \(W_{i,i}>0\)
% ci-dessous. ***************
\begin{notation}\label{not_1}
To simplify notations, in the sequel, for any function
\(\zeta:V\mapsto \R\) and any subset \(U\subset V\), we write
\(\zeta_U\) for the restriction of \(\zeta\) to the subset \(U\). We
write \(d\beta_{U}=\prod_{i\in U}d\beta_{i}\) to denote integration on
variables in \(\beta_U\). Similarly, if \(A\) is a \(V\times V\) matrix
and \(U\subset V\), \(U'\subset V\), we write \(A_{U,U'}\) for its
restriction to the block \(U\times U'\). Note also that when $(\xi_i)_{i\in V}$ is in $\R^V$, we sometimes simply write $\xi$ for the operator of
multiplication by $\xi$, (i.e. the diagonal matrix with diagonal coefficients \((\xi_i)_{i\in V}\)), 
as it is done in formula \eqref{Hbeta}. It will be clear from the context and considerations of dimension if it denotes a vector or the operator of multiplication.
Finally, we write
$\nu_U^{W,\theta,\eta}$ for $\nu_U^{W_{U,U},\theta_U,\eta_U}$ when
$U\subset V$ is a subset of $V$ and $W$ (resp. $\theta$, $\eta$) is a
$V\times V$ matrix (resp. vectors in $\R^V$).
\end{notation}

We state the counterpart of Proposition~1 of \cite{STZ15} in the
context of the measure \(\nu_V^{W,\theta, \eta}\).
\begin{coroA}\label{marginals-beta}
%Let \((\beta_i)_{i\in V}\) be a random vector with law \(\nu_V^{W,\theta,\eta}\), here we abusively used the same letter \(\beta\) as in \(\nu_V^{W,\theta,\eta}(d\beta)\), but there is no ambiguity in this context.
Under the probability distribution $\nu_V^W(d\beta)$, 
\begin{enumerate}[(i)]
\item the random variable \( {1\over 2\beta_i-W_{i,i}}\) follows an
inverse Gaussian distribution with parameters
\(({\theta_i\over \eta_i+\sum_{j\neq i} W_{i,j} \theta_j},
\theta_i^2)\), for all \(i\in V\),
\item the random vector \((\beta_i)\) is 1-dependent, i.e. for any
subsets \(V_1\subset V\), and \(V_2\subset V\) such that the distance
in the graph \(\mathcal{G}\) between \(V_1\) and \(V_2\) is strictly larger
than 1, then the random variables \(\beta_{V_1}\) and \(\beta_{V_2}\) are independent.
\end{enumerate}
\end{coroA}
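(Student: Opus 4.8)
The plan is to read off both statements directly from the explicit Laplace transform \eqref{eq:2} established in Lemma~\ref{lem_beta}, using throughout that \eqref{eq:2} makes the Laplace transform of $\nu_V^{W,\theta,\eta}$ finite on a neighbourhood of $0$, so that the law of any sub-vector of coordinates is determined by its Laplace transform.

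\textbf{Part (i).} Fix $i\in V$ and specialise \eqref{eq:2} to $\lambda_j=0$ for all $j\neq i$, keeping $\lambda_i>-\theta_i^2$ free. With $s:=\sqrt{\theta_i^2+\lambda_i}$ one has $\sqrt{\theta^2+\lambda}=\theta+(s-\theta_i)e_i$, where $e_i$ is the $i$-th coordinate vector, so that, using the symmetry of $W$,
\[
-\demi\langle\sqrt{\theta^2+\lambda},W\sqrt{\theta^2+\lambda}\rangle+\demi\langle\theta,W\theta\rangle+\langle\eta,\theta-\sqrt{\theta^2+\lambda}\rangle
=-(s-\theta_i)\Big(\eta_i+\sum_{j}W_{i,j}\theta_j\Big)-\demi(s-\theta_i)^2W_{i,i}.
\]
Separating the $W_{i,i}$-terms and invoking the identity $(s-\theta_i)W_{i,i}\theta_i+\demi(s-\theta_i)^2W_{i,i}=\demi W_{i,i}(s^2-\theta_i^2)=\demi W_{i,i}\lambda_i$, together with the prefactor $\prod_{j}\theta_j/\sqrt{\theta_j^2+\lambda_j}=\theta_i/s$, formula \eqref{eq:2} collapses to
\[
\int e^{-\lambda_i\beta_i}\,\nu_V^{W,\theta,\eta}(d\beta)=\frac{\theta_i}{\sqrt{\theta_i^2+\lambda_i}}\,\exp\Big(-\demi W_{i,i}\lambda_i-c_i\big(\sqrt{\theta_i^2+\lambda_i}-\theta_i\big)\Big),\qquad c_i:=\eta_i+\sum_{j\neq i}W_{i,j}\theta_j .
\]
This is exactly the value \eqref{eq:2} assigns to the one-vertex measure $\nu_{\{i\}}^{W_{i,i},\theta_i,c_i}$, so by uniqueness of Laplace transforms the marginal law of $\beta_i$ under $\nu_V^{W,\theta,\eta}$ equals $\nu_{\{i\}}^{W_{i,i},\theta_i,c_i}$; and $c_i>0$ since $\theta_j>0$ and connectedness of $\mathcal{G}$ gives $i$ a neighbour, so $(\theta_i/c_i,\theta_i^2)$ is an admissible inverse-Gaussian parameter. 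It then remains to treat the one-vertex case, where the change of variable $t=1/(2\beta_i-W_{i,i})$ turns the density \eqref{eq:1} of $\nu_{\{i\}}^{W_{i,i},\theta_i,c_i}$ into $\frac{\theta_i}{\sqrt{2\pi t^{3}}}\exp\big(-\demi(\theta_i^2/t+c_i^2 t-2c_i\theta_i)\big)\mathds{1}_{t>0}\,dt$, which is precisely the inverse Gaussian density with parameters $(\theta_i/c_i,\theta_i^2)$ recalled in the Introduction. This proves (i).

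\textbf{Part (ii).} Let $V_1,V_2\subset V$ be at graph distance strictly larger than $1$, so that $V_1\cap V_2=\emptyset$ and $W_{i,j}=0$ whenever $i\in V_1$, $j\in V_2$. Evaluate \eqref{eq:2} at $\lambda\in\R^V$ supported on $V_1\cup V_2$ and write $\sqrt{\theta^2+\lambda}=\theta+\delta_1+\delta_2$, where $\delta_k$ is supported on $V_k$ and depends only on $\lambda_{V_k}$. Expanding $\langle\sqrt{\theta^2+\lambda},W\sqrt{\theta^2+\lambda}\rangle$, the sole term coupling $\delta_1$ with $\delta_2$ is $2\langle\delta_1,W\delta_2\rangle=2\sum_{i\in V_1,\,j\in V_2}\delta_{1,i}W_{i,j}\delta_{2,j}$, which vanishes by hypothesis; likewise $\langle\eta,\theta-\sqrt{\theta^2+\lambda}\rangle=-\langle\eta,\delta_1\rangle-\langle\eta,\delta_2\rangle$ and $\prod_{i}\theta_i/\sqrt{\theta_i^2+\lambda_i}$ factorises over $V_1$ and $V_2$. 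Hence the right-hand side of \eqref{eq:2} is a product $F_1(\lambda_{V_1})\,F_2(\lambda_{V_2})$; setting $\lambda_{V_2}=0$ identifies $F_1$ with the Laplace transform of the marginal of $\beta_{V_1}$, and symmetrically for $F_2$, so the joint Laplace transform of $(\beta_{V_1},\beta_{V_2})$ is the product of the two marginal ones and $\beta_{V_1}\perp\beta_{V_2}$.

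\textbf{Main obstacle.} Given Lemma~\ref{lem_beta} the argument is mostly bookkeeping; the one point that requires care in (i) is the cancellation showing that the shift $\beta_i\mapsto\beta_i-\demi W_{i,i}$ removes all dependence on the diagonal weight $W_{i,i}$ — this is exactly what reconciles the convention here (allowing $W_{i,i}\neq0$) with the inverse-Gaussian statement. In (ii) one must only check that after setting $\lambda_{V_2}=0$ the factor $F_1$ is genuinely a marginal Laplace transform (immediate, since the factorisation is an identity in the full variable $\lambda$ on $V_1\cup V_2$), and use the finiteness of the Laplace transform near $0$ to pass from its factorisation to independence.
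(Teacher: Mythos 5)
Your proof is correct and follows essentially the same route as the paper: both read the corollary off the explicit Laplace transform \eqref{eq:2} of Lemma~\ref{lem_beta}, specialising $\lambda$ to a single coordinate for (i) (the paper compares directly with the Laplace transform of the inverse of an inverse-Gaussian variable, while you match the one-vertex measure and change variables in its density) and, for (ii), using the factorisation of \eqref{eq:2} when $W_{V_1,V_2}=0$, which is exactly the "direct consequence of the Laplace transform" the paper invokes without spelling it out.
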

The following lemma was proved independently in the 3rd arxiv version of
\cite{STZ15} and in \cite{letac2017multivariate}. (The result is stated in the case of \(\theta=1\) in \cite{STZ15},  Lemma 4, but it can
be easily extended to the case of general \(\theta\), see Section \ref{sec_results_distrib}).
\begin{lemA}\label{lem_marg_cond}
%Assume that \(\beta\) is a random potential distributed according to
%\(\nu_V^{W,\theta,\eta}\). 
Let \(U\subset V\). Under the probability distribution \(\nu_V^{W,\theta,\eta}(d\beta)\),
\begin{enumerate}[(i)]
\item
\label{item-2}
% Define \((\widehat\eta_i)\in (\R_+)^{V_1}\) by
% \begin{align}\label{eta}
%\widehat\eta= \eta_{V_1}+ P_{V_1,V_1^c} 1_{|V_1^c},
%\end{align}
%where \(P_{V_1,V_1^c}\) is the \(V_1\times V_1^c\) block of the
% matrix \(P\).
 \(\beta_{U}\) is distributed according to
$\nu_U^{W,\theta,\eta}$ (i.e. \( \nu_{U}^{W_{U,U},\theta_U,\widehat\eta}\), c.f. Notations~\ref{not_1}) where
% where \(W_{U,U}\) is the restriction to \(U\times U\) of \(W\) and
\begin{align}\label{eta}
  \widehat\eta= \eta_{U}+ W_{U,U^c} (\theta_{U^c}).
\end{align}
\item
\label{lem_conditioning}
conditionally on \(\beta_{U}\), \(\beta_{U^c}\) is distributed according to
\(\nu_{U^c}^{\widecheck W,\theta,\widecheck\eta}\) where
\[
\widecheck W=W_{U^c,U^c}+ W_{U^c,U} \left((H_\beta)_{U,U}\right)^{-1}
W_{U,U^c}, \;\;\; \widecheck \eta=\eta_{U^c}+W_{U^c,U}
\left((H_\beta)_{U,U}\right)^{-1}(\eta_{U}).
\]
\end{enumerate}
\end{lemA}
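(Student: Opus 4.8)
The plan is to prove both statements simultaneously by writing the density $\nu_V^{W,\theta,\eta}(d\beta)$ explicitly and integrating out the block $\beta_{U^c}$, using a Schur-complement identity to recognize the resulting expressions as the claimed densities. First I would record the basic block-matrix facts for $H_\beta = 2\beta - W$: writing $H = H_\beta$ and splitting $V = U \sqcup U^c$, we have
\begin{equation}\label{schur}
\det H = \det H_{U^c,U^c}\cdot \det\bigl(H_{U,U} - H_{U,U^c} H_{U^c,U^c}^{-1} H_{U^c,U}\bigr),
\end{equation}
and the $U\times U$ block of $H^{-1}$ is the inverse of the Schur complement appearing in \eqref{schur}. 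The key algebraic observation is that, since $H_{U,U^c} = -W_{U,U^c}$ and $H_{U^c,U^c}$ restricted to the $U^c$-block equals $(H_\beta)_{U^c,U^c}$ which, viewed as $H$ for the graph on $U^c$, is itself of the form $2\beta_{U^c} - (\text{something})$, the Schur complement $H_{U,U} - W_{U,U^c}(H_\beta)_{U^c,U^c}^{-1}W_{U^c,U}$ is exactly $2\beta_U - \widecheck W$ with $\widecheck W = W_{U,U} + W_{U,U^c}(H_\beta)_{U^c,U^c}^{-1} W_{U^c,U}$ — i.e. it has the same structure, with the ``effective conductances'' $\widecheck W$ obtained by the standard electrical-network reduction (Schur reduction) over the erased vertices. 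This is the point where one must be careful about which block is being conditioned on: statement (i) integrates out $\beta_{U^c}$ and keeps $\beta_U$, while statement (ii) does the reverse; the two are symmetric once \eqref{schur} is in hand.

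Next I would substitute these identities into the exponent of \eqref{eq:1}. The quadratic form $\langle\theta, H_\beta\theta\rangle$ splits as $\langle\theta_U, H_{U,U}\theta_U\rangle + 2\langle\theta_U, H_{U,U^c}\theta_{U^c}\rangle + \langle\theta_{U^c}, H_{U^c,U^c}\theta_{U^c}\rangle$; the cross term $-2\langle\theta_U, W_{U,U^c}\theta_{U^c}\rangle$ combines with the $\langle\eta,\theta\rangle$ term and with the change $\eta \rightsquigarrow \widehat\eta = \eta_U + W_{U,U^c}\theta_{U^c}$ to reproduce, in the variables on $U$, precisely the combination $-\tfrac12\langle\theta_U, H_{U,U}^{(\widecheck W)}\theta_U\rangle - \tfrac12\langle\widehat\eta, (\cdot)^{-1}\widehat\eta\rangle + \langle\widehat\eta,\theta_U\rangle$ appearing in $\nu_U^{W_{U,U},\theta_U,\widehat\eta}$, with the remaining factors being exactly the density $\nu_{U^c}^{\widecheck W,\theta,\widecheck\eta}(d\beta_{U^c})$ evaluated at the conditional parameters. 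Crucially, the total mass of the latter is $1$ by the already-established fact (Lemma~\ref{lem_beta}) that each $\nu$ is a probability measure — so integrating $d\beta_{U^c}$ leaves exactly $\nu_U^{W_{U,U},\theta_U,\widehat\eta}(d\beta_U)$, giving (i); and the conditional density is by construction $\nu_{U^c}^{\widecheck W,\theta,\widecheck\eta}$, giving (ii). The bookkeeping of the $\eta$-dependent terms — matching $-\tfrac12\langle\eta_{U^c},(H_\beta)_{U^c,U^c}^{-1}\eta_{U^c}\rangle$ against the $\widecheck\eta$ self-term and verifying the linear term $\langle\widecheck\eta,\theta_{U^c}\rangle$ — is the most delicate part of the computation, though it is a finite-dimensional identity that unwinds by completing the square.

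The main obstacle I anticipate is the algebraic verification that, after the Schur substitution, the cross terms and the quadratic form $\langle\eta, H_\beta^{-1}\eta\rangle$ recombine exactly into the $\widehat\eta$- and $\widecheck\eta$-parametrized forms with no leftover. This requires the block inversion formula for $H_\beta^{-1}$ (expressing $(H^{-1})_{U^c,U^c}$, $(H^{-1})_{U,U^c}$, etc. via the Schur complement) and then a patient expansion; it is routine but error-prone. An alternative, cleaner route that I would consider as a cross-check is to verify the claim through the Laplace transform \eqref{eq:2}: compute $\int e^{-\langle\lambda_U,\beta_U\rangle}\nu_V^{W,\theta,\eta}(d\beta)$ by first applying \eqref{eq:2} with $\lambda_{U^c}=0$ and then checking that the result coincides with the Laplace transform of $\nu_U^{W_{U,U},\theta_U,\widehat\eta}$; since $\sqrt{\theta_{U^c}^2 + 0} = \theta_{U^c}$ the $U^c$-contributions telescope and one reads off $\widehat\eta$ directly from the term $\langle\eta,\theta - \sqrt{\theta^2+\lambda}\rangle$. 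This avoids block-matrix inversions entirely at the cost of only proving (i); statement (ii) would then follow from (i) applied with roles of $U$ and $U^c$ exchanged together with the already-known structure of the joint density, or by a direct density computation as above.
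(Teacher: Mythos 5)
Your plan is mathematically sound, but it is not the route this paper takes: in Section~\ref{sec_results_distrib} the authors simply invoke the already-published case \(\theta\equiv 1\) (Lemma~4 of the third arXiv version of \cite{STZ15}; see also \cite{letac2017multivariate}) and reduce general \(\theta\) to it by the change of variables \(\beta_i'=\theta_i^2\beta_i\), \(W'_{i,j}=\theta_i\theta_j W_{i,j}\), \(\eta_i'=\theta_i\eta_i\), under which \(\nu_V^{W,\theta,\eta}\) becomes \(\nu_V^{W',1,\eta'}\). You instead redo the computation from scratch: factor the joint density \eqref{eq:1} as (marginal on \(U\)) \(\times\) (conditional on \(U^c\)) via a Schur-complement identity, and use the normalization statement of Lemma~\ref{lem_beta} to see that the conditional block integrates to \(1\). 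Your Laplace-transform shortcut for (i) — set \(\lambda_{U^c}=0\) in \eqref{eq:2}, use \(\sqrt{\theta_{U^c}^2+0}=\theta_{U^c}\), and read off \(\widehat\eta=\eta_U+W_{U,U^c}\theta_{U^c}\) — is also valid, since \eqref{eq:2} holds on an open neighbourhood of \(\lambda=0\) and hence characterizes the law. What each approach buys: the paper's argument is two lines but outsources the actual computation to the references, whereas yours is self-contained and delivers (i) and (ii) simultaneously from a single factorization of the density (the Laplace route gives (i) with essentially no matrix algebra, but not (ii)).

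One caution on your execution: the Schur data you display are taken with respect to the wrong block for the factorization you need. To split the density as \(\nu_U^{W_{U,U},\theta_U,\widehat\eta}(d\beta_U)\,\nu_{U^c}^{\widecheck W,\theta_{U^c},\widecheck\eta}(d\beta_{U^c})\) one must use the complement of the \(U\)-block, namely
\(\det H_\beta=\det\bigl((H_\beta)_{U,U}\bigr)\det\bigl(2\beta_{U^c}-\widecheck W\bigr)\),
\((H_\beta^{-1})_{U^c,U^c}=\bigl(2\beta_{U^c}-\widecheck W\bigr)^{-1}\), and
\(\bigl<\eta,H_\beta^{-1}\eta\bigr>=\bigl<\eta_U,((H_\beta)_{U,U})^{-1}\eta_U\bigr>+\bigl<\widecheck\eta,(2\beta_{U^c}-\widecheck W)^{-1}\widecheck\eta\bigr>\),
together with the splitting of the indicator \(\mathds{1}_{H_\beta>0}=\mathds{1}_{(H_\beta)_{U,U}>0}\,\mathds{1}_{2\beta_{U^c}-\widecheck W>0}\) — not the identity involving \(H_{U^c,U^c}^{-1}\) that you wrote (you flag the symmetry yourself, but the bookkeeping has to be carried out consistently in this direction). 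You should also record that on the relevant event \(((H_\beta)_{U,U})^{-1}\) has nonnegative entries (it is the inverse of a positive definite M-matrix), so that \(\widecheck W\) and \(\widecheck\eta\) are admissible parameters and Lemma~\ref{lem_beta} indeed applies to give total mass \(1\) for the conditional factor.
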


\subsection{Brownian motions with interacting drifts: main results}\label{subsec_results}
Let \(t^0=(t^0_{i})_{i\in V} \in (\R_+)^V\) be a nonnegative
vector. We set
% We set
% \[
% \beta^0={1\over 2t^0}\in (\R_+\cup\{+\infty\})^V,
% \]
% and
\[
K_{t^0}=\operatorname{Id}-t^0W,
\]
where \(t^0\) denotes the operator of multiplication by \(t^0\) (or
equivalently the diagonal matrix with diagonal coefficients
\((t^0_i)\)).  Note that when \(t_{i}^{0}> 0\),  \(\forall i\in V\), we have
\(K_{t^0}=t^0(H_{{1\over 2t^0}})\), with notation \eqref{Hbeta} and \(\frac{1}{2t^{0}}=\left( \frac{1}{2t^{0}_{i}} \right)_{i\in V}\).

For \(T=(T_{i})_{i\in V}\in (\mathbb{R}_{+}\cup\{+\infty\})^{V}\) and
\(t\in \R_+\) we write \(t\wedge T\) for the vector
\((t\wedge T_i)_{i\in V}\), where for reals \(x,y\), \(x\wedge y=\min(x,y)\).

The following lemma introduces the processes which are the main objects of study of this paper as solution to a S.D.E.
\begin{lem}\label{def-SDE0}

Let \(\theta=(\theta_{i})_{i\in V}\in (\R_+)^V\) and
\(\eta=(\eta_{i})_{i\in V}\in (\R_+)^V\) be non-negative vectors. Denote \(|V|=N\), 
% assume that \(\theta_i>0\) for \(i\in V_1\).
let \((B_i(t))_{i\in V}\) be a standard \(N\)-dimensional Brownian
motion.
\begin{enumerate}[(i)]
\item
\label{lem_i}
The following stochastic differential equation is well-defined for all
\(t\ge 0\) and has a unique pathwise solution :
\begin{gather*}\label{SDE0}\tag{\(E^{W,\theta,\eta}_{V}(Y)\)}
% \begin{cases}
Y_{i}(t)= \theta_i+
% &=d((Id-(t\wedge T) P)\psi_{t})
% \\
% &=-\mathds{1}_{t\wedge T}(P\psi_{t})dt+(I-(t\wedge T)P)K_{t}^{-1}\mathds{1}_{t<T}dB\\
\int_0^t \mathds{1}_{s<T_i}dB_i(s)-\int_0^t \mathds{1}_{s<T_i}
(W\psi(s))_ids,\;\;\; \forall i\in V,
% &\hbox{ if \(i\in V_1\)}
% \\
% X_i(t)=\theta_i \;\;\; \forall t\ge 0, &\hbox{ if
% \(i\in V\setminus V_1\)}
% \end{cases}
\end{gather*}
% and \(X_i(t)=\theta_i\) for all \(t\ge 0\), if
% \(i\in V\setminus V_1\),
where \(T=(T_{i})_{i\in V}\) is the random vector of stopping times
defined by
\begin{equation*}
%\label{T}
T_{i}=\inf\{t\ge 0;\ Y_i(t)-t\eta_i=0\},\ \ \ \forall i\in V.
\end{equation*}
Also, \(\forall t\), \(K_{t\wedge T}\) is positive definite, and
\begin{align}
\label{psi0}
  \psi(t)=K_{t\wedge T}^{-1}\ Y(t)
\end{align}
Moreover, \(T_i<+\infty\) a.s. for all \(i\in V\), and \(K_T>0\) is positive definite.

\item
\label{lem_ii}
Denote \(X(t)=Y(t)-(t\wedge T)\eta\). The previous S.D.E is equivalent to
the following
\begin{gather*}\label{SDE0X}\tag{\(E^{W,\theta,\eta}_{V}(X)\)}
X_{i}(t)= \theta_i+ \int_0^t \mathds{1}_{s<T_i}dB_i(s)-\int_0^t
\mathds{1}_{s<T_i} ((W\psi)(s)+\eta)_ids,\;\;\; \forall i\in V,
\end{gather*}
with
\begin{align}
\label{psi0X}
  \psi(t)=K_{t\wedge T}^{-1}(X(t)+(t\wedge T)\eta)
\end{align}
and \(T_i\) is identified to the first hitting time of 0 by \(X_i(t)\).

\item
\label{lem_iii}
The process \(\psi(t)\) is a continuous vectorial martingale, it can be written as  (recall
that \(\mathds{1}_{s< T}\) is the operator of multiplication by
\(\mathds{1}_{s< T_{i}}\)) :
% , solution of the following SDE
\begin{gather}\label{SDE2}\tag{\(E^{W,\theta,\eta}_{V}(\psi)\)}
\psi(t)= \theta+\int_0^t K_{s\wedge T}^{-1} \left(\mathds{1}_{s<T}
  dB(s)\right).
\end{gather}
% with
% \begin{equation}\label{Tbis}
%T_{k}=\inf\{t\ge 0;\ (K_{t\wedge T}\psi(t))_k-t\eta_k=0\},\ \ \  k=1, \ldots, n.
%\end{equation}
Moreover,
% for \(i\in V\), \((K_{t\wedge T}\psi(t))_i=0\) for all \(t\ge T_i\),
% and
the quadratic variation of \(\psi(t)\) is given by, for all \(t\ge 0\) (with convention that \(\frac{1}{\infty}=0,\frac{1}{0}=\infty\)),
\[
\left< \psi,\psi \right>_{t}= \left( H_{{1\over 2(t\wedge
      T)}}\right)^{-1}.
\]
\end{enumerate}
\end{lem}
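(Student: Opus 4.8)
The plan is to construct the solution of \eqref{SDE0} on the successive (at most $|V|$) time intervals delimited by the hitting times, passing throughout to the variable $\psi$, and to recognize on each such \emph{phase} a linear S.D.E.\ with smooth time‑dependent coefficients. Set $\sigma_1:=\sup\{t:\ \operatorname{Id}-tW>0\}=1/\lambda_{\max}(W)$ (we may assume $\lambda_{\max}(W)>0$, the complementary case being the one‑vertex Brownian statement recalled in the introduction). As long as no coordinate has been killed, $K_{t\wedge T}=\operatorname{Id}-tW$, and \eqref{SDE0} reads $dY=\mathds{1}\,dB-W(\operatorname{Id}-tW)^{-1}Y\,dt$, a linear S.D.E.\ whose coefficients are real‑analytic in $t$ on $[0,\sigma_1)$; it has a unique strong solution there. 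Putting $\psi=(\operatorname{Id}-tW)^{-1}Y$, It\^o's formula together with $\frac{d}{dt}(\operatorname{Id}-tW)^{-1}=(\operatorname{Id}-tW)^{-1}W(\operatorname{Id}-tW)^{-1}$ makes the two drift terms cancel and gives $d\psi=(\operatorname{Id}-tW)^{-1}\mathds{1}\,dB$; thus on this first phase $\psi$ is a continuous local martingale.

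The crucial point, reused in every later phase, is that a.s.\ some coordinate is absorbed before $\operatorname{Id}-tW$ degenerates, i.e.\ $T_{(1)}:=\min_i T_i<\sigma_1$. Suppose not: on an event $E$ of positive probability, $X_i(t)=Y_i(t)-t\eta_i>0$ for all $i$ and all $t<\sigma_1$, and on $E$ the formulas above are valid on all of $[0,\sigma_1)$. Let $u\in(\R_+^*)^V$ be the Perron eigenvector of $W$, $Wu=\lambda_{\max}(W)u$, so that $(\operatorname{Id}-tW)u=(1-t/\sigma_1)u$ and $(\operatorname{Id}-tW)^{-1}u=(1-t/\sigma_1)^{-1}u$. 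Then, on $E$, $\langle u,\psi(t)\rangle=\langle u,\theta\rangle+\int_0^t(1-s/\sigma_1)^{-1}\langle u,dB_s\rangle$ is a continuous local martingale whose bracket $|u|^2\int_0^t(1-s/\sigma_1)^{-2}\,ds$ tends to $+\infty$ as $t\uparrow\sigma_1$, so by Dambis--Dubins--Schwarz $\liminf_{t\uparrow\sigma_1}\langle u,\psi(t)\rangle=-\infty$; but $\langle u,X(t)+t\eta\rangle=(1-t/\sigma_1)\langle u,\psi(t)\rangle$ is strictly positive on $E$ (since $u_i>0$, $X_i(t)>0$, $\eta_i\ge0$) while $1-t/\sigma_1>0$ --- a contradiction. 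Hence $T_{(1)}<\sigma_1$, the degeneracy is never reached, and \eqref{SDE0} has a unique solution on $[0,T_{(1)}]$ with $K_{t\wedge T}=\operatorname{Id}-tW>0$ on that interval.

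At $T_{(1)}$ the coordinate(s) realizing the minimum hit $0$ and stay frozen ($Y_i(t)\equiv T_i\eta_i$ for $t\ge T_i$, as $\mathds{1}_{s<T_i}\equiv0$ there). Eliminating the frozen components from the identity $K_{t\wedge T}\psi(t)=X(t)+(t\wedge T)\eta$ (equivalent to \eqref{psi0X}) is precisely the Schur‑complement computation of Lemma~\ref{lem_marg_cond}(ii) with $\beta_i=\frac1{2T_i}$, and shows that, from time $T_{(1)}$ on, the surviving coordinates $X_U$, $U=\{i:\ T_i>T_{(1)}\}$, obey an equation of the very same type on the subgraph $U$, with the conductance matrix $\widecheck W$ and vector $\widecheck\eta$ of that lemma --- $\widecheck W$ again symmetric, nonnegative and irreducible, erasing vertices preserving connectedness --- and with $\psi_U=(\operatorname{Id}_U-t\widecheck W)^{-1}(X_U+t\widecheck\eta)$; moreover $\operatorname{Id}_U-T_{(1)}\widecheck W>0$, being the Schur complement of the positive definite matrix $H_{\frac1{2T_{(1)}}}$. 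One then repeats the two previous paragraphs on $[T_{(1)},\sigma_2)$, $\sigma_2=1/\lambda_{\max}(\widecheck W)$, with the Perron eigenvector of $\widecheck W$, obtaining a further absorption strictly before $\sigma_2$. Since there are at most $|V|$ distinct hitting times, after finitely many phases every coordinate is frozen; this gives $T_i<\infty$ a.s., a globally defined solution, unique on $\R_+$ by uniqueness of each linear piece, $K_{t\wedge T}=(t\wedge T)\,H_{\frac1{2(t\wedge T)}}>0$ for all $t$ (each phase ending strictly before its degeneracy time keeps $H_{\frac1{2(t\wedge T)}}$ positive definite), and in particular $K_T>0$.

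Part (ii) is the algebraic identity $X=Y-(t\wedge T)\eta$, together with $\frac{d}{dt}(t\wedge T_i)=\mathds{1}_{t<T_i}$ and $T_i=\inf\{t:\ X_i(t)=0\}$. For part (iii), from $Y=K_{t\wedge T}\psi$, the finite‑variation relation $dK_{t\wedge T}=-\mathds{1}_{t<T}W\,dt$, and continuity of $\psi$ and of the (always invertible) $K_{t\wedge T}$, It\^o's formula yields $dY=-\mathds{1}_{t<T}W\psi\,dt+K_{t\wedge T}\,d\psi$; comparing with \eqref{SDE0} gives $K_{t\wedge T}\,d\psi=\mathds{1}_{t<T}\,dB$, i.e.\ \eqref{SDE2}, valid through the phase boundaries, so $\psi$ is a continuous local martingale (and a genuine martingale once one knows $\E[\langle\psi_i,\psi_i\rangle_t]<\infty$, which follows from the bracket computed next and a moment bound for the law of $T$). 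For that bracket, set $H_t:=H_{\frac1{2(t\wedge T)}}=\operatorname{diag}(1/(t\wedge T_i))-W$; then $K_{t\wedge T}^{-1}=H_t^{-1}(t\wedge T)^{-1}$ and $(t\wedge T)^{-1}\mathds{1}_{t<T}(t\wedge T)^{-1}=\frac1{t^2}\mathds{1}_{t<T}$, so by \eqref{SDE2}, $\frac{d}{dt}\langle\psi,\psi\rangle_t=K_{t\wedge T}^{-1}\mathds{1}_{t<T}(K_{t\wedge T}^{-1})^{T}=\frac1{t^2}H_t^{-1}\mathds{1}_{t<T}H_t^{-1}$; on the other hand $\frac{d}{dt}H_t=-\frac1{t^2}\mathds{1}_{t<T}$, whence $\frac{d}{dt}H_t^{-1}=-H_t^{-1}(\frac{d}{dt}H_t)H_t^{-1}=\frac1{t^2}H_t^{-1}\mathds{1}_{t<T}H_t^{-1}$, the very same quantity. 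Since $\langle\psi,\psi\rangle_0=0=\lim_{t\downarrow0}H_t^{-1}$ (with the conventions $1/0=\infty$, $1/\infty=0$) and both sides are continuous through the hitting times, integration gives $\langle\psi,\psi\rangle_t=(H_{\frac1{2(t\wedge T)}})^{-1}$ for all $t\ge0$. The single step requiring a real idea is the second paragraph and its repetitions --- ruling out that $\operatorname{Id}-tW$, respectively $H_{\frac1{2(t\wedge T)}}$, goes singular before the relevant coordinates reach $0$ --- everything else being the standard theory of linear S.D.E.s with smooth coefficients, the Schur‑complement bookkeeping already present in Lemma~\ref{lem_marg_cond}, and a direct It\^o computation.
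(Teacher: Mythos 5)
Your argument is correct, and it reaches the conclusion by a genuinely different route from the paper on the two non-routine points. For the key fact that some coordinate is absorbed strictly before $K$ degenerates, the paper (Lemma~\ref{lem-finite-lc}) uses a deterministic time change sending $[0,h)$ to $[0,\infty)$, the blow-up of the entries of $K_s^{-1}$ as $s\to h$ (which makes the drift of the transformed process eventually negative), and a comparison with Brownian motion; you instead project onto the Perron eigenvector $u$ of $W$, observe that $\left<u,\psi(t)\right>=\left<u,\theta\right>+\int_0^t(1-s/\sigma_1)^{-1}\left<u,dB(s)\right>$ has a deterministically divergent bracket, and use Dambis--Dubins--Schwarz oscillation to contradict the positivity $\left<u,X(t)+t\eta\right>=(1-t/\sigma_1)\left<u,\psi(t)\right>>0$; this is a clean and self-contained alternative (note only that the DDS statement should be applied to the globally defined integral $\int_0^t(1-s/\sigma_1)^{-1}\left<u,dB(s)\right>$, and then restricted to the event of no absorption). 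For the continuation past the first absorption, the paper shifts time back to $0$ and runs an induction on $|V|$ with the deformed matrix $\widetilde W=WK_\tau^{-1}$ and $\widetilde\eta=\eta+\widetilde W(\tau\eta)$ (using Lemma~\ref{Kt+s}), whereas you keep the original time variable and eliminate the frozen coordinates by the Schur complement, getting $\psi_U=(\operatorname{Id}_U-t\widecheck W)^{-1}(X_U+t\widecheck\eta)$ with the $\widecheck W,\widecheck\eta$ of Lemma~\ref{lem_marg_cond}; I checked this identity, and the positive-definiteness of $\operatorname{Id}_U-T_{(1)}\widecheck W$ as a Schur complement is correct, so the phase-by-phase scheme closes (one should just say explicitly that the later Perron/DDS steps are performed conditionally on the past at the phase-starting time, since $\widecheck W$, $u$ and $\sigma_2$ are then random but measurable with respect to that past, and the post-phase Brownian increments are independent of it). Your proof of (ii) and of (iii), including the bracket identity via $\frac{d}{dt}H_t^{-1}=\frac{1}{t^2}H_t^{-1}\mathds{1}_{t<T}H_t^{-1}$, is essentially the paper's computation. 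One small caveat: your parenthetical claim that $\psi$ is a true (not merely local) martingale because of ``a moment bound for the law of $T$'' is not available at this stage (the law of $T$ is only identified in Theorem~\ref{thm_main1}, which uses this lemma), and integrability of $(H_{1/(2(t\wedge T))}^{-1})_{ii}$ is not obvious a priori; the paper does not address this either, and nothing later requires more than the local martingale property together with the explicit bracket, so this is a cosmetic rather than a substantive issue.
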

%Let us justify why we called these processes "Brownian motions with interacting drifts". One reason is obviously since the equation 
It may not seem obvious at this point why we call these processes ``Brownian motions with interacting drifts''. The explanation will come at the end of this section as a consequence of the Abelian property Theorem~\ref{thm_abelian}: under the condition that the diagonal terms of $W$ are null, we will show that the marginals $(X_{i}(t))_{t\ge 0}$ are Brownian motions with constant negative drift stopped at their first hitting time of 0, see Corollary~\ref{Cor_marginals}.

%In the above notation, one should think about a collection of Brownian
%martingales \((Y_{i}(t))\) indexed by \(V\), where \(\theta\) is their
%initial positions, \(\eta\) is their coordinatewise constant drifts,
%and \(W\) play as the strength of the (nearest neighbor) interactive
%drifts; finally, \(\psi_{i}(t)\) is the (random) interactive drift
%build up by coordinate \(i\), which is constant in average, moreover,
%\(\psi_{i}(t)\) is a martingale. 
%Having the family of Brownian motion with interacting drifts defined as solution of S.D.E.s in \ref{def-SDE0}, 
Our first main result concerns the distribution of its hitting time:
\begin{thm}\label{thm_main1}
Let \((\theta_i)_{i\in V}\in (\R_+^*)^V\),
\((\eta_i)_{i\in V}\in (\R_+)^V\) and \((Y_i(t))_{i\in V}\), \((X_i(t))_{i\in V}\),
\((T_i)_{i\in V}\) be as in Lemma \ref{def-SDE0}.
\begin{enumerate}[(i)]
\item\label{thm_main_i} The random vector
\(\left(\frac{1}{2T_i}\right)_{i\in V}\)
% \stackrel{law}{=} \beta_{|V_1},\)
has law \(\nu^{W,\theta,\eta}_V\),
% where where \((\beta_i)_{i\in V}\) has the law \(\nu^{W,\theta}\)
% and \(W_{V_1}\) and \(\theta_{V_1}\) are the restriction to
% \(V_1\times V_1\) and \(V_1\) of \(W\) and \(\theta\) and \(\eta\)
% is given by \eqref{eta}.  \( \eta= P_{1,2} \theta_{|V_2}, \) where
% \(P_{1,2}\) is the block \(V_1\times V_2\) of the matrix \(P\).  In
% particular, when \(\theta_i>0\) for all \(i\), then
% \(\left(\frac{1}{2T_i}\right)_{i\in V_1}\) has the marginal law on
% \(V_1\) of \(\nu^{W,\theta}_V\).
\item\label{thm_main_ii} Conditionally on \((T_i)_{i\in
  V}\), %\(((Y_{i}(t)-(t\wedge T_i)\eta_i)_{t\ge 0})_{i=1, \ldots, n}\)
\(((X_{i}(t))_{0\le t\le T_i})_{i\in V}\) are independent
3-dimensional Bessel bridges from \(\theta_{i}\) to \(0\) on time
interval \([0,T_{i}]\).
\end{enumerate}
\end{thm}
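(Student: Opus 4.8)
The plan is to establish Theorem~\ref{thm_main1} by computing the joint Laplace transform of $(1/(2T_i))_{i\in V}$ and matching it with \eqref{eq:2}, while simultaneously extracting the conditional law of the paths; the cleanest route is to exhibit an explicit martingale built from the process $\psi$ and apply optional stopping. First, I would fix $\lambda=(\lambda_i)_{i\in V}$ with $\lambda_i+\theta_i^2>0$ and look for a space-time harmonic functional of the form $M(t)=F\bigl(\psi(t\wedge T),\, \langle\psi,\psi\rangle_{t}\bigr)$, i.e.\ a function of the martingale $\psi$ and its (deterministic-given-$T$, but in general $\psi$-measurable) quadratic variation $(H_{1/(2(t\wedge T))})^{-1}$ from Lemma~\ref{def-SDE0}\eqref{lem_iii}. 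By It\^o's formula applied to \eqref{SDE2}, the drift-cancellation condition becomes a linear PDE in the matrix variable; the one-dimensional identities recalled in the introduction (inverse Gaussian law, $3$-d Bessel bridge) suggest the ansatz $M(t)=\prod_{i}\tfrac{\theta_i}{\sqrt{\theta_i^2+\lambda_i(t)}}\exp(\text{quadratic in }\psi)$ where $\lambda_i(t)$ solves an ODE driven by $\langle\psi,\psi\rangle_t$. I expect $M$ to be, up to the right normalization, exactly the exponential martingale whose terminal value at $t=\max_i T_i$ is $\exp(-\langle\lambda,\beta\rangle)$ with $\beta_i=1/(2T_i)$, times the Radon--Nikodym density that tilts each stopped path into a Bessel bridge.

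The key steps, in order: (1) verify that $K_{t\wedge T}>0$ for all $t$ and $K_T>0$ a.s.\ (already granted by Lemma~\ref{def-SDE0}), so that $\psi$ and all inverse matrices are well-defined up to and including the hitting times; (2) write down the candidate martingale $M(t)$ and check via It\^o and \eqref{SDE2} that it is a genuine local martingale — this is the computational heart and the place where the specific algebraic form of $H_\beta=2\beta-W$ and the Schur-complement structure behind Lemma~\ref{lem_marg_cond}\eqref{lem_conditioning} must conspire to kill the drift; (3) justify that $M$ is a true martingale (uniform integrability, or a localization plus monotone/dominated convergence as $t\to\infty$, using $T_i<\infty$ a.s.\ and $K_T>0$); (4) evaluate $\E[M(\infty)]=M(0)$ to read off $\E[e^{-\langle\lambda,\beta\rangle}]$, and observe it equals the right-hand side of \eqref{eq:2}, giving part~\eqref{thm_main_i} by injectivity of the Laplace transform on the relevant domain; (5) for part~\eqref{thm_main_ii}, note that the same $M$, restricted to test functionals of the paths $(X_i(t))_{0\le t\le T_i}$, performs a Girsanov change of measure under which, conditionally on $T$, the $X_i$ decouple across $i$ and each becomes the diffusion $dX_i = dB_i + (1/X_i - X_i/(T_i-t))\,dt$ — i.e.\ the $3$-d Bessel bridge from $\theta_i$ to $0$ on $[0,T_i]$ — exactly as in the one-vertex case recalled from \cite{MR0350881,Revuz-Yor}.

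An alternative, possibly more robust, organization is inductive on $|V|$: use Lemma~\ref{lem_marg_cond} to peel off one vertex $i_0$, observe that the remaining coordinates, conditioned on $\beta_{\{i_0\}}$ (equivalently on $T_{i_0}$), satisfy an S.D.E.\ of the same type \eqref{SDE0} on $V\setminus\{i_0\}$ with modified conductances $\widecheck W$ and drift $\widecheck\eta$ — this is where the \emph{restriction and conditioning} ``hidden Markov'' properties advertised in the abstract do the work — and then invoke the induction hypothesis together with the one-dimensional base case. Either way, the main obstacle I anticipate is step~(2): proving that the natural exponential/algebraic candidate is drift-free requires identifying the correct functional of the full matrix $\langle\psi,\psi\rangle_t$ and checking a matrix identity that is essentially the statement that $\det K_{t\wedge T}$ and the quadratic form $\langle\theta,H_\beta\theta\rangle$ evolve compatibly under the dynamics; getting the normalizing factor $\prod_i \theta_i/\sqrt{\theta_i^2+\lambda_i}$ and the cross term $\langle\eta,\theta-\sqrt{\theta^2+\lambda}\rangle$ to come out precisely as in \eqref{eq:2} is where the bookkeeping is delicate. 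The continuity and positivity statements from Lemma~\ref{def-SDE0}, and the fact that $\psi$ is an honest martingale with the stated bracket, are exactly the tools that make steps (3)–(5) routine once (2) is in hand.
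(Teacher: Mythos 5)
Your main route is, in essence, the argument the authors themselves present in Section~\ref{sec_proof_main_thm} as ``convincing but incomplete'', and the gap is exactly at your steps (2) and (4). The martingale that actually comes out of \eqref{SDE2} is
\[
\exp\left(-\left<\lambda,\psi(t)\right>-\demi\left<\lambda,\Bigl(H_{{1\over 2(t\wedge T)}}\Bigr)^{-1}\lambda\right>\right),
\]
and optional stopping/dominated convergence gives only the identity
\(\E\bigl[\exp(-\left<\lambda, H_{1/(2T)}^{-1}\eta\right>-\demi\left<\lambda,H_{1/(2T)}^{-1}\lambda\right>)\bigr]=e^{-\left<\lambda,\theta\right>}\), in which \(\beta=1/(2T)\) enters nonlinearly through \(H_\beta^{-1}\); this is \emph{not} the Laplace transform \(\E[e^{-\left<\lambda,\beta\right>}]\), and the paper states explicitly that it is not known whether this functional identity characterizes \(\nu_V^{W,\theta,\eta}\). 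Your richer ansatz with time-dependent \(\lambda_i(t)\) driven by \(\left<\psi,\psi\right>_t\), whose terminal value would be \(e^{-\left<\lambda,\beta\right>}\) times a Bessel-bridge tilting density, is never constructed, and verifying the drift cancellation you defer to step (2) is precisely the missing mathematics --- nobody has produced such a martingale, so steps (4) and (5) (including the claim that ``the same \(M\)'' performs the Girsanov tilt yielding part~\eqref{thm_main_ii}) have no support. Your alternative inductive route is circular as stated: the assertion that, after peeling off a vertex and conditioning, the remaining coordinates solve an S.D.E.\ of the same type with parameters \(\widecheck W,\widecheck\eta\) is Theorem~\ref{thm_abelian}~\eqref{thm_abelian-ii}, which the paper \emph{deduces} from Theorem~\ref{thm_main1} together with Lemma~\ref{lem_marg_cond}; it is not available as an independent input.

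The paper's rigorous proof runs in the opposite direction and crucially exploits that \(\nu_V^{W,\theta,\eta}\) is a probability with explicit normalizing constant: one \emph{defines} the candidate law \(\overline\P_V^{W,\theta,\eta}\) as the mixture of independent 3-dimensional Bessel bridges over \(T=1/(2\beta)\), \(\beta\sim\nu_V^{W,\theta,\eta}\), computes its Radon--Nikodym derivative with respect to independent stopped Brownian motions \(\P_{V,\theta}\) via the one-dimensional identity \eqref{one-dim}, and then shows by an It\^o computation (Lemma~\ref{calcul-Ito}, resting on the matrix identities of Lemma~\ref{Kt+s}) that conditioning on the path up to time \(t\) factorizes this density into an explicit Girsanov exponential times the total mass of a shifted mixture \(\overline\P_{V(t)}^{\widetilde W^{(t)},X(t),\widetilde\eta^{(t)}}\), which equals \(1\) precisely because the family \(\nu\) is normalized. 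Girsanov's theorem plus the pathwise uniqueness from Lemma~\ref{def-SDE0} then identify \(\overline\P_V^{W,\theta,\eta}\) with the law of the solution of \eqref{SDE0X}, giving (i) and (ii) simultaneously. If you want to salvage your forward approach you would need either a proof that the functional identity \eqref{equation_lambda} determines the law, or an explicit new martingale with terminal value \(e^{-\left<\lambda,\beta\right>}\); absent either, the proposal does not prove the theorem.
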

%\begin{rmk}
%\label{cas-dim-1}
Remark that when \(V=\{1\}\) is a single point and \(W_{1,1}=0\), then
\(X_1(t)=Y_1(t)-t\eta_1\) is a drifted Brownian motion with initial value \(\theta_1>0\) and negative drift $-\eta_1$ stopped at 
its first hitting time of 0. Hence, it corresponds to the problem
presented in \eqref{drifted}; in particular \(\eta_{1}=0\) corresponds
to \eqref{Brownian}.  

When \(V=\{1\}\) and \(W_{1,1}>0\), \((Y_1(t))_{t\ge 0}\)
is the solution of the S.D.E.
\begin{equation}\label{drifted-bridge}
dY_1(t)=\mathds{1}_{t<T_1}\left(dB_1(t)-{W_{1,1}\over 1-tW_{1,1}}Y(t)dt\right)
\end{equation}
with initial condition \(Y_{1}(0)=\theta_{1}\). It implies that $Y_1(t)-t\eta_1$ has the law of a drifted Brownian bridge from \(\theta_1\) to \(0\) on time interval
\([0,{1/W_{1,1}}]\) with constant negative drift $-\eta_1$, and stopped at its first hitting of 0. By drifted Brownian bridge from \(\theta_1\) to \(0\) on time interval
\([0,{1/W_{1,1}}]\) with constant negative drift $-\eta_1$ we mean the process $Z_t-t\eta_1$ where $(Z_t)_{t\in [0,1/W_{1,1}]}$ is the Brownian bridge. (It may also be viewed as a Brownian bridge from $\theta_1$ to $-{\eta_1\over W_{1,1}}$ on time interval \([0,{1/W_{1,1}}]\).)
%is a
%Brownian bridge from \(\theta_1\) to \(0\) on time interval
%\([0,{1/W_{1,1}}]\), stopped at the first hitting of 0 by $Y_1(t)-t\eta_1$. 
Consequently, \(Y_1(t)\) has the same law as
\((1-tW_{1,1})B_1({{t\over 1-tW_{1,1}}})\) up to time $T_1$, see
e.g. \cite{Revuz-Yor}~p154, and \(T_1\) has the same law as
\({1\over 1+\tau W_{1,1}}\) where \(\tau\) is the first hitting time
of 0 by a Brownian motion with drift \(-\eta_1\). 
Therefore,  \({1\over {1\over T_1}-W_{1,1}}\) follows an Inverse Gaussian law with
parameters \(({\theta_1\over \eta_1}, {\theta_1^2})\), and it is
coherent with the expression of marginal law of \(\beta_i\) in
Corollary~\ref{marginals-beta}.
%\end{rmk}

% When \(V_{1}\subset V\) and \(V_1=\{i_1\}\) is a single point and
% \(W_{i_1,i_1}=0\), then \(X_{i_1}(t)\) is a Brownian motion with a
% constant drift \((P\theta)_{i_1}\). Indeed, \(\psi(t)\) is constant
% on \(V\setminus V_1\) and \((P\psi_t)_{i_1}=(P\theta)_1\) for all
% \(t\ge 0\).  It then corresponds to the problem mentioned in
% \eqref{drifted}.

The next result
shows some "abelianity" of the process, in the sense that times on each
coordinates can be run somehow independently. The first two statements are counterparts of the two statements of Lemma~\ref{lem_marg_cond}.
%, this is the processus
%version of the fact that any marginal or conditional marginal of the
%distribution \(\nu_{V}^{W,\theta,\eta}\) is still in the exponential
%family of distributions
%\(\{\nu_{V}^{W,\theta,\eta}:\ V,W,\theta,\eta\}\), see
%Lemma~\ref{lem_marg_cond}.
\begin{thm}[Abelian properties]
\label{thm_abelian}
Let \((X(t))\) be the solution of \eqref{SDE0X}. Denote
\(\beta={1\over 2T}\).
\begin{enumerate}[(i)]
\item\label{thm_abelian-i} {\it (Restriction)} Let \(U\subset
V\). Then, \((X_{U}(t))\) has the same law as the solution of
\(E_{U}^{W_{U,U},\theta_U,\widehat \eta}(X)\), where
% we understand here that \(W\) and \(\theta\) stands for their
% restriction to \(U\times U\) and \(U\) and where
\[
\widehat \eta=\eta_{U}+W_{U,U^c}(\theta_{U^c}).
\]
% where \(W_{U,U^c}\) is the block \(U\times U^c\) of the matrix
% \(W\).
% \end{thm}
% \begin{thm}[Conditionning]
\item\label{thm_abelian-ii}{\it (Conditionning on a subset)} Let \(U\subset V\).
% denote by \(H^{(1)}_{\beta}\) the restriction of \(H_\beta\) to
% \(U\times U\) (which is \(\beta_{|U}\) measurable).
Then, conditionally on \((X_{U}(t))_{t\ge 0}\),
\((X_{U^c}(t))_{t\ge 0}\) has the law of the solutions of the
S.D.E.
\hyperref[SDE0]{\(E_{U^c}^{\widecheck W,\theta_{U^c},\widecheck \eta}(X)\)},
where
% we understand here that \(\theta\) stands for its restriction to
% \(U\) and where
\[
\widecheck
W=W_{U^c,U^c}+W_{U^c,U}\left((H_{\beta})_{U,U}\right)^{-1}W_{U,U^c},
\;\;\; \widecheck
\eta=\eta_{U^c}+W_{U^c,U}\left((H_{\beta})_{U,U}\right)^{-1}(\eta_{U}).
\]
% where \((H_{\beta})_{U,U}\) is the restriction to
\item\label{thm_abelian-iii} {\it (Markov property)} Consider
\(t^0=(t^0_{i})_{i\in V}\in (\R_+)^V\). Denote by
\[ {\mathcal F}^X(t^0)=\sigma\{(X_k(s))_{s\le t^0_k}, \;k\in V\},
\]
the filtration generated by the past of the trajectories before time
\((t^0_k)_{k\in V}\). 
% (*****peut probablement etre remplace par temps d'arret*******). 
Then, consider for \(t\ge 0\),
\[
\widetilde X(t)=X(t^0+t)\;\left( =(X_i(t^0_i+t))_{i\in V}\right),
\]
the process shifted by times \((t^0_i)_{i\in V}\). (Note that the shift in time is not
necessarily the same for each coordinate).  Conditionally on
\({\mathcal F}^X(t^0)\), the process \((\widetilde X(t))_{t\ge 0}\) has the same law as the solution of the equation 
%\hyperref[SDE0Xt]{\(E_{V,t^0}^{W,X(t^0),\eta}(X)\)} (defined in Proposition \ref{statio}), hence of that of
\hyperref[SDE0X]{\(E_{V}^{\widetilde W^{(t^0)},X(t^0),\widetilde \eta^{(t^0)}}(X)\)}, with
\[
\widetilde W^{(t^0)}=W(K_{t^0\wedge T})^{-1}, \;\;\; \widetilde \eta^{(t^0)}=\eta+\widetilde
W^{(t^0)}((t^0\wedge T)\eta),
\]
where in the second expression, $t^0\wedge T$ denotes the operator of multiplication by $(t_i^0\wedge T_i)$.
In particular, if \(V(t^0)=\{i\in V, \; T_i>t^0_i\}\), conditionally
on \({\mathcal{F}}(t^0)\),
\(\left({1\over T_i-t^0_i}\right)_{i\in V(t^0)}\) has the law
\(\nu_{V(t^0)}^{\widetilde W^{(t^0)}, X(t^0), \widetilde\eta^{(t^0)}}\)
\item\label{thm_abelian-iiii}{\it (Strong Markov property)} Let \(T^0=(T^0_i)_{i\in V}\in (\mathbb{R}_+\cup\{\infty\})^V\) be a ``multi-stopping time'', that is, for all \(t^0\in (\mathbb{R}_+)^V\), the event \(\{T^0\le t^0\}:=\cap_{i\in V}\{T^0_i\le t^0_i\}\) is \(\mathcal{F}^X(t^0)\)-measurable. Denote by
\[\mathcal{F}^X(T^0)=\{A\in \mathcal{F}^X(\infty),\ \forall t^0\in (\mathbb{R}_+)^V,\ A\cap\{T^0\le t^0\}\in \mathcal{F}^X(t^0)\}\]
the filtration of events anterior to \(T^0\). Define for \(t\ge 0\),
\[\widetilde{X}(t)=X(T^0+t)\]
the process shifted at times \((T^0_i)_{i\in V}\). On the event \(\{T^0_i< \infty, \; \forall i\in V\}\), conditionally on  \(T^0\) and \(\mathcal{F}^X(T^0)\), the process \(\widetilde{X}(t)\) has the same law as the solution of the  S.D.E. \hyperref[SDE0X]{\(E_V^{\widetilde{W}^{(T^0)},X(T^0),\widetilde{\eta}^{(T^0)}}(X)\)}, where
\[\widetilde{W}^{(T^0)}=W(K_{T^0\wedge T})^{-1},\ \ \widetilde{\eta}^{(T^0)}=\eta+\widetilde{W}^{(T^0)}((T^0\wedge T)\eta),\]
where in the second expression, $T^0\wedge T$ denotes the operator of multiplication by $(T_i^0\wedge T_i)$.
\end{enumerate}
\end{thm}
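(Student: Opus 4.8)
The plan is to deduce the strong Markov property (Theorem~\ref{thm_abelian}\eqref{thm_abelian-iiii}) from the ordinary Markov property (Theorem~\ref{thm_abelian}\eqref{thm_abelian-iii}) by the standard discretization-of-stopping-times argument, adapted to the multidimensional ``multi-stopping time'' setting. First I would reduce to bounded multi-stopping times by intersecting with the event $\{T^0\le n\mathbf{1}\}$ and letting $n\to\infty$ (using that on $\{T^0_i<\infty\ \forall i\}$ this exhausts the relevant event), so that we may assume each $T^0_i$ takes values in a fixed bounded interval. Then for each integer $n$ I would set $T^{0,(n)}_i=2^{-n}\lceil 2^n T^0_i\rceil$, the coordinatewise dyadic approximation from above; the key point to check is that $T^{0,(n)}$ is again a multi-stopping time, i.e. $\{T^{0,(n)}\le t^0\}\in\mathcal{F}^X(t^0)$, which holds because $\{T^{0,(n)}_i\le t^0_i\}=\{T^0_i\le 2^{-n}\lfloor 2^n t^0_i\rfloor\}$ is a condition on $T^0_i$ at a dyadic time $\le t^0_i$, hence $\mathcal{F}^X(t^0)$-measurable; and $\mathcal{F}^X(T^0)\subset\mathcal{F}^X(T^{0,(n)})$.

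Next I would prove the result for a multi-stopping time $S$ taking values in a countable set $D=\prod_i D_i$ (each $D_i$ countable) by conditioning on the value of $S$: on the atom $\{S=s\}$ for $s\in D$, the event is $\mathcal{F}^X(s)$-measurable by the multi-stopping-time property (applied with $t^0=s$ and using that $\{S=s\}=\{S\le s\}\setminus\bigcup\{S\le s'\}$ over the finitely-or-countably-many $s'$ obtained by decreasing one coordinate), and on that atom the conditioning-at-a-deterministic-time statement \eqref{thm_abelian-iii} applies with $t^0=s$; one then checks that the parameters $\widetilde W^{(s)}=W(K_{s\wedge T})^{-1}$ and $\widetilde\eta^{(s)}$ assemble measurably into $\widetilde W^{(S)},\widetilde\eta^{(S)}$, so that summing over $s\in D$ against a test functional gives the claim for $S$. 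Apply this to $S=T^{0,(n)}$.

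Finally I would pass to the limit $n\to\infty$. Here $T^{0,(n)}\downarrow T^0$ coordinatewise, and I want: (a) $\widetilde X^{(n)}(t):=X(T^{0,(n)}+t)\to X(T^0+t)$ uniformly on compacts, which follows from path-continuity of $X$ (each coordinate is continuous, being a Bessel-bridge-type process stopped at $T_i$, with $X$ constant equal to $0$ after $T_i$); (b) the parameters converge, $K_{T^{0,(n)}\wedge T}\to K_{T^0\wedge T}$ hence $\widetilde W^{(n)}\to\widetilde W^{(T^0)}$ and $\widetilde\eta^{(n)}\to\widetilde\eta^{(T^0)}$, by continuity of $t^0\mapsto K_{t^0\wedge T}$ and of matrix inversion on the (open, by Lemma~\ref{def-SDE0}\eqref{lem_i}) set where $K_{T^0\wedge T}>0$; and (c) the law of the solution of $E_V^{\widetilde W,X(T^0),\widetilde\eta}(X)$ depends continuously (in distribution) on the parameters $(\widetilde W,X(T^0),\widetilde\eta)$ — this last is the pathwise-uniqueness/stability statement for the S.D.E.\ of Lemma~\ref{def-SDE0}, which can be obtained by a Gronwall estimate on the driving coefficients since they are locally Lipschitz in the state and parameters away from the hitting times. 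Combining, for any bounded continuous $F$ and bounded $\mathcal{F}^X(T^0)$-measurable $G$, $\mathbb{E}[G\,F(\widetilde X)]=\lim_n\mathbb{E}[G\,F(\widetilde X^{(n)})]=\lim_n\mathbb{E}[G\,\mathbb{E}_{\widetilde W^{(n)},X(T^{0,(n)}),\widetilde\eta^{(n)}}[F]]=\mathbb{E}[G\,\mathbb{E}_{\widetilde W^{(T^0)},X(T^0),\widetilde\eta^{(T^0)}}[F]]$, which is the assertion; the final clause about $\nu_{V(T^0)}$ then follows by applying Theorem~\ref{thm_main1}\eqref{thm_main_i} to the shifted S.D.E.\ restricted to the coordinates $i$ with $T^0_i<T_i$.

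I expect the main obstacle to be the continuity-in-distribution of the S.D.E.\ solution as the parameters vary together with the shifting of initial times: the drift $(W\psi)(s)+\eta$ blows up as $K_{s\wedge T}$ degenerates near $s=T_i$, so the stability estimate cannot be purely global and must be localized away from the (random) hitting times, exploiting that after $T_i$ the $i$-th coordinate is frozen at $0$ and drops out of the equation; one convenient route is to run the argument coordinate-group by coordinate-group using the restriction property \eqref{thm_abelian-i} to peel off already-absorbed coordinates, or alternatively to invoke the explicit Bessel-bridge representation of Theorem~\ref{thm_main1}\eqref{thm_main_ii} conditionally on $T$ to get continuity of the conditional law and then average. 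A secondary subtlety is purely bookkeeping: verifying carefully that coordinatewise dyadic rounding preserves the multi-stopping-time property and that $\mathcal{F}^X(T^{0,(n)})\downarrow\mathcal{F}^X(T^0)$ in the sense needed to push $G$ through the limit, which is the multidimensional analogue of the classical right-continuity-of-the-filtration step.
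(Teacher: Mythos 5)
There is a genuine gap: your proposal only proves part~\eqref{thm_abelian-iiii}, and it does so by \emph{assuming} part~\eqref{thm_abelian-iii}, which is itself one of the assertions of the theorem — indeed its core. Parts \eqref{thm_abelian-i} and \eqref{thm_abelian-ii} are also left unaddressed (in the paper they follow quickly from Theorem~\ref{thm_main1} combined with Lemma~\ref{lem_marg_cond}, but they still have to be said). The missing part~\eqref{thm_abelian-iii} is not a routine Markov-property check: because the shifts $t^0_i$ differ from coordinate to coordinate, the shifted process is \emph{not} a solution of the equation driven by the shifted Brownian motion (that only happens when all $t^0_i$ are equal, which is the elementary computation of Proposition~\ref{statio}), so no direct S.D.E.\ manipulation gives it. The paper's proof of \eqref{thm_abelian-iii} goes through the representation of Theorem~\ref{thm_main1}: it writes the law of $X$ as the mixture $\int \bigl(\otimes_i \B^{3,T_i}_{\theta_i,0}\bigr)\,\overline\nu_V^{W,\theta,\eta}(dT)$, uses the Markov property and explicit transition kernel \eqref{Bessel_kernel} of the $3$-dimensional Bessel bridge, and then performs the change of variables $T_U\mapsto \widetilde T_U=T_U-t^0_U$, showing via the algebraic identities \eqref{eta-H} and \eqref{eq-det} of Lemma~\ref{Kt+s} that the product of the bridge kernels with $\overline\nu_V^{W,\theta,\eta}(dT)$ factorizes into a function of the past times the measure $\nu_U^{\widetilde W^{(t^0)},x,\widetilde\eta^{(t^0)}}(d\widetilde T)$; a test-function trick (formula \eqref{equation-affreuse} applied twice) then identifies the conditional law. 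None of this machinery, nor any substitute for it, appears in your proposal, so the theorem is not proved.

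For the part you do treat, your route coincides in substance with the paper's: dyadic rounding of $T^0$ from above, verification of the multi-stopping-time property (do it jointly, $\{T^{0,(n)}\le t^0\}=\{T^0\le s\}$ with $s_i=2^{-n}\lfloor 2^n t^0_i\rfloor$, rather than coordinatewise), application of \eqref{thm_abelian-iii} at deterministic dyadic times, and a passage to the limit. The limit step in the paper is handled by Lemma~\ref{Feller}, i.e.\ continuity of $(W,\theta,\eta)\mapsto \E_V^{W,\theta,\eta}\bigl(g\bigr)$, proved exactly by the second route you mention (the Bessel-bridge mixture representation plus a domination of the density $\nu_V^{W,\theta}$ after the change of variables of \cite{STZ15}); your first suggested route, a Gronwall stability estimate for the S.D.E., would run into the degenerating drift near the hitting times that you yourself flag, and also into the fact that the coefficients depend on the random times $T$, so it is not a viable shortcut as stated.
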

\begin{rmk}\label{rmk-prop}
Assertions \eqref{thm_abelian-i} and \eqref{thm_abelian-ii} of the Theorem are 
%the S.D.E. counterpart of the restriction, conditioning
%properties of 
%Lemma~\ref{lem_marg_cond}, and the proof is a simple 
direct consequence of Theorem~\ref{thm_main1} and Lemma~\ref{lem_marg_cond}. 
The assertion \eqref{thm_abelian-iii} is more involved. The extension to the strong Markov property \eqref{thm_abelian-iiii} follows rather standard arguments. See the proofs in Section~\ref{sec_abelian}.
\end{rmk}

\begin{rmk} In all these statements, the restricted (or conditioned)
process that appears is not in general solution of the S.D.E.  with
the original shifted Brownian motion, but with a different one, which is a
priori not a Brownian motion in the original filtration.
% ( this is why we intentionally say that these processes are equal in
% law to solutions of S.D.E. and not solution of this S.D.E.).  we
% intentionally say that the restriction (resp. conditioned) processes
% that appear are equal in law to solutions of S.D.E of the
% type~\eqref{SDE0X} and not solutions of these S.D.E since they are
% not solutions of these S.D.E with the same Brownian but with
% different ones that in general are not Brownian in the same
% filtration as the original ones.
%In particular, \eqref{abelian_iii} is much stronger and  of
%different nature than the result of stationarity stated in
%Proposition~\ref{statio}~\eqref{statio_ii} : indeed,
%Proposition~\ref{statio} is the result of a simple computation and the
%shifted process \((X(s+t))_{s\ge 0}\) is solution of the S.D.E with
%the same shifted Brownian motion. This is in general not the case when
%the time shifts are different for each coordinate.
Nevertheless, when all the $t_0^i$ are equal to the same real $s$, then it is the case : $(X(t+s))_{t\ge 0}$ is solution of the S.D.E. with the shifted
Brownian motion $(B(s+t))_{t\ge 0}$, c.f. forthcoming Proposition~\ref{statio}. The result in the latter case is much simpler and is a consequence of a plain computation, whereas the general
case uses the representation of Theorem~\ref{thm_main1}.
\end{rmk}

Note that this allows to identify the law of marginals and conditional marginals.
\begin{coro}
\label{Cor_marginals}
Consider $(X(t))_{t\ge 0}$ solution of \eqref{SDE0X}. Fix \(i_{0}\in V\).
{\hspace*{-3.45em}}\begin{enumerate}[i)]
\item
\label{coro_i}
If \(W_{i_{0},i_{0}}=0\) (resp. \(W_{i_{0},i_{0}}>0\)), the marginal \((X_{i_{0}}(t))_{t\ge 0}\) has the law of a drifted Brownian motion starting
at \(\theta_{i_{0}}\) (resp. drifted Brownian bridge from \(\theta_{i_{0}}\) to 0 on
time interval \([0,{1\over W_{i_{0},i_{0}}}]\), with the meaning given in the discussion of equation \eqref{drifted-bridge}) with constant drift
\[-\widehat\eta_{i_{0}}=-(\eta_{i_{0}}+\sum_{j\neq i_{0}} W_{i_{0},j}\theta_j)\] and stopped
at its first hitting time of 0.
\item
\label{coro_ii}
Conditionally on
\(((X_k(t))_{t\ge 0})_{k\neq i_{0}}\), the process \((X_{i_{0}}(t))_{t\ge 0}\) has the law of
a drifted Brownian bridge from \(\theta_{i_{0}}\) to 0 on time interval
\([0,{1\over \widecheck W_{i_{0},i_{0}}}]\) with constant drift \(-\widecheck\eta_{i_{0}}\) and
stopped at its first hitting time of 0, where, with
\(U=V\setminus\{i_{0}\}\),
\[
\widecheck W_{i_{0},i_{0}}= W_{i_{0},i_{0}}+W_{i_{0},U}( (H_{\beta})_{U,U})^{-1}W_{U,i_{0}},\;\;\;
\widecheck\eta_{i_{0}} =\eta_{i_{0}} +W_{i_{0},U}( (H_{\beta})_{U,U})^{-1}(\eta_U).
\]
\end{enumerate}
\end{coro}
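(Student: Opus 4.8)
The plan is to derive Corollary~\ref{Cor_marginals} as a direct specialization of Theorem~\ref{thm_abelian} to the case where the subset $U$ (or its complement) is a single vertex, combined with the one-vertex base case already discussed around equation \eqref{drifted-bridge}. For part~\ref{coro_i}, I would take the subset in the Restriction property \eqref{thm_abelian-i} to be $U=\{i_0\}$: by that assertion, $(X_{i_0}(t))_{t\ge 0}$ has the same law as the solution of $E^{W_{i_0,i_0},\theta_{i_0},\widehat\eta}_{\{i_0\}}(X)$ with $\widehat\eta=\eta_{i_0}+W_{\{i_0\},\{i_0\}^c}(\theta_{\{i_0\}^c})=\eta_{i_0}+\sum_{j\neq i_0}W_{i_0,j}\theta_j$. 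Then I invoke the explicit description of the one-vertex S.D.E.: when $W_{i_0,i_0}=0$ this is $X_{i_0}(t)=\theta_{i_0}+B_{i_0}(t\wedge T_{i_0})-\widehat\eta_{i_0}(t\wedge T_{i_0})$, a drifted Brownian motion with drift $-\widehat\eta_{i_0}$ stopped at its first hitting time of $0$; when $W_{i_0,i_0}>0$, equation \eqref{drifted-bridge} (applied with the interacting drift folded into $\widehat\eta$) identifies it as a drifted Brownian bridge from $\theta_{i_0}$ to $0$ on $[0,1/W_{i_0,i_0}]$ with drift $-\widehat\eta_{i_0}$, stopped at its hitting time of $0$, exactly as in the discussion following \eqref{drifted-bridge}.

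For part~\ref{coro_ii}, I would instead use the Conditioning property \eqref{thm_abelian-ii} with $U=V\setminus\{i_0\}$, so that $U^c=\{i_0\}$. That assertion says that conditionally on $(X_U(t))_{t\ge 0}$ — equivalently, conditionally on $((X_k(t))_{t\ge 0})_{k\neq i_0}$ — the process $(X_{i_0}(t))_{t\ge 0}$ has the law of the solution of $E^{\widecheck W,\theta_{i_0},\widecheck\eta}_{\{i_0\}}(X)$, where $\widecheck W$ and $\widecheck\eta$ are the scalars $\widecheck W_{i_0,i_0}=W_{i_0,i_0}+W_{i_0,U}((H_\beta)_{U,U})^{-1}W_{U,i_0}$ and $\widecheck\eta_{i_0}=\eta_{i_0}+W_{i_0,U}((H_\beta)_{U,U})^{-1}(\eta_U)$, with $\beta=\frac{1}{2T}$ as in the theorem. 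Since $\widecheck W_{i_0,i_0}$ is a sum of $W_{i_0,i_0}\ge 0$ and the manifestly nonnegative term $W_{i_0,U}((H_\beta)_{U,U})^{-1}W_{U,i_0}$ (nonnegative because $(H_\beta)_{U,U}$ is positive definite with nonnegative inverse entries, as recalled after \eqref{Hbeta} and guaranteed by $K_T>0$ in Lemma~\ref{def-SDE0}), one would typically have $\widecheck W_{i_0,i_0}>0$, and again equation \eqref{drifted-bridge} identifies the solution as a drifted Brownian bridge from $\theta_{i_0}$ to $0$ on $[0,1/\widecheck W_{i_0,i_0}]$ with drift $-\widecheck\eta_{i_0}$, stopped at its first hitting of $0$. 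The edge case $\widecheck W_{i_0,i_0}=0$ (which forces $W_{i_0,i_0}=0$ and $W_{i_0,U}((H_\beta)_{U,U})^{-1}W_{U,i_0}=0$, i.e. $i_0$ is isolated) degenerates to a pure drifted Brownian motion and should be mentioned as the limiting case $1/\widecheck W_{i_0,i_0}=+\infty$.

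The main subtlety — rather than a genuine obstacle — is bookkeeping around the fact that in part~\ref{coro_ii} the conditioning coefficients $\widecheck W_{i_0,i_0}$ and $\widecheck\eta_{i_0}$ are themselves functions of the random matrix $(H_\beta)_{U,U}$, hence random; one must read Theorem~\ref{thm_abelian}\eqref{thm_abelian-ii} as providing a regular conditional law given $(X_U(t))_{t\ge 0}$ (which determines $\beta_U=(1/2T_i)_{i\in U}$ and thus $(H_\beta)_{U,U}$), and then apply the one-vertex description \eqref{drifted-bridge} with these frozen parameter values. The only other point requiring a word of care is the consistency check: the drift $-\widehat\eta_{i_0}=-(\eta_{i_0}+\sum_{j\neq i_0}W_{i_0,j}\theta_j)$ and the law of $1/(2T_{i_0})$ it produces must match the marginal inverse Gaussian law from Corollary~\ref{marginals-beta}(i), which is already the content of the paragraph following Theorem~\ref{thm_main1}; I would simply note this agreement to close the argument. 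No new computation is needed beyond what is quoted above.
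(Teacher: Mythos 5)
Your proposal is correct and follows essentially the same route as the paper: the paper's own proof is precisely to apply Theorem~\ref{thm_abelian}~\eqref{thm_abelian-i} with $U=\{i_0\}$ for part~i) and Theorem~\ref{thm_abelian}~\eqref{thm_abelian-ii} with $U=\{i_0\}^c$ for part~ii), together with the one-vertex discussion following Theorem~\ref{thm_main1} around \eqref{drifted-bridge}. Your extra remarks (reading \eqref{thm_abelian-ii} as a regular conditional law with the random coefficients frozen, and the consistency check with Corollary~\ref{marginals-beta}) are sound elaborations of the same argument rather than a different approach.
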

\begin{proof}
Apply Theorem~\ref{thm_abelian}~\eqref{thm_abelian-i} to the case \(U=\{i_{0}\}\) for \eqref{coro_i} and Theorem~\eqref{thm_abelian}~\eqref{thm_abelian-ii} to $U=\{i_0\}^c$ for \eqref{coro_ii}, and the considerations following Theorem~\ref{thm_main1}.
\end{proof}
In particular, it means that the marginal \((X_{i_{0}}(t))_{t\ge 0}\) is a
diffusion process, as well as the (conditional) marginal
\((X_{i_{0}}(t))_{t\ge 0}\) conditioned on
\(((X_k(t))_{t\ge 0})_{\{ k: k\ne i_{0}\}}\).  This Markov property is not
obvious in the initial equation \ref{SDE0X}. Indeed, the process
\((X_{i_{0}}(u))_{u\le s}\) before time \(s\) affects the drifts of
\((X_{\{ k: k\ne i_{0}\}}(u))_{u\le s}\), and so the values
\(X_{\{ k: k\ne i_{0}\}}(s)\), which themselves affect the drift of
\(X_{i_{0}}(s)\).

More generally, there are hidden Markov properties in the restricted
process \((X_U(t))_{t\ge 0}\). Indeed, the law of the future path
\((X_U(t))_{t\ge s}\) only depends on the past of
\((X_{U}(u))_{u\le s}\) through the values of \(X_U(s)\) and
\((s\wedge T)_{U}\). This is not obvious from the initial equation
\ref{SDE0X}.  The same is true for the process
\((X_{U^c}(t))_{t\ge 0}\) conditioned on \((X_U(t))_{t\ge 0}\).

\subsection{Relation with the Vertex Reinforced Jump
  Process}\label{link_VRJP}

% The Vertex Reinforced Jump Process (VRJP) is as self-interacting
% process introduced in \cite{?} and investigated in
% \cite{?,?,ST15,STZ15,SZ15}.
Let us describe the VRJP in its "exchangeable" time scale introduced
in \cite{ST15}. We consider the VRJP with a general initial local time, as in \cite{STZ15}, Section~3.1. 
The VRJP, with initial local time $(\theta_i)_{i\in V}$, 
is the self-interacting process
\((Z_t)_{t\ge 0}\) that, conditionally on its past at time \(t\),
jumps from a vertex \(i\) to \(j\) with rate
\[
W_{i,j}{\sqrt{\theta_j+\ell^{Z}_j(t)} \over \sqrt{\theta_i+\ell^{Z}_i(t)}},
\]
where \(\ell^{Z}_j(t)=\int_0^t\mathds{1}_{Z_s=i}ds\) denotes the local
time of \(Z\) at site \(i\). In \cite{ST15}, it was proved that this
process is a mixture of Markov Jump Processes and that the mixing law
can be represented by a marginal of a supersymmetric \(\sigma\)-field
investigated by Disertori, Spencer, Zirnbauer in
\cite{Zirnbauer91,DSZ06,DS10}. In \cite{STZ15}, it was related to the
random potential \(\beta\) of Lemma~\ref{lem_beta}.
\begin{theoA}[\cite{ST15} Theorem 2~\cite{STZ15} Theorem~3]
\label{VRJP}
Let \(\delta\in V\) where \(V\) is finite, and
\(U=V\setminus\{\delta\}\). Let \((\theta_i)_{i\in V}\in (\R_+^*)^V\)
be a positive vector.  Consider \(\beta=(\beta_j)_{j\in V}\) sampled
with distribution \(\nu^{W,\theta}_V\).  Define \( (\psi_j)_{j\in V}\)
as the unique solution of
\[
\begin{cases}
\psi(\delta)=1,\\
H_\beta(\psi)_{|U}=0.
\end{cases}
\]
Then, the VRJP starting at vertex \(\delta\) and initial local times
\((\theta_i)_{i\in V}\) is a mixture of Markov jump processes with
jumping rates
\begin{align}\label{jump_rate}
  \demi W_{i,j}{\psi_j\over \psi_i}.
\end{align}
More precisely, it means that
\[
\P^{\operatorname{VRJP},\theta}_\delta(\cdot)=\int
P^\psi_\delta(\cdot)\nu^{W,\theta}_V(d\beta),
\]
where \(\P^{\operatorname{VRJP},\theta}_\delta\) is the law of the VRJP starting at vertex
\(\delta\) and initial local times \((\theta_i)_{i\in V}\) and
\(P^\psi_\delta\) is the law of the Markov jump process with jumping
rates \eqref{jump_rate} starting at vertex $\delta$.
\end{theoA}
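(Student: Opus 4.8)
The plan is to split the statement into the two results it cites: (A) in the exchangeable time scale the VRJP started at $\delta$ is a mixture of Markov jump processes with an explicit random environment (this is \cite{ST15}, Theorem~2), and (B) that environment is the push-forward of $\nu_V^{W,\theta}$ under $\beta\mapsto\psi$, where $\psi$ is the $H_\beta$-harmonic function on $U$ with $\psi(\delta)=1$ (this is \cite{STZ15}, Theorem~3). I would carry out (A) and then (B). An alternative route, which I would keep in reserve, is an induction on $|V|$ using the base case $|V|=2$ together with the restriction/conditioning structure of $\nu_V^{W,\theta,\eta}$ (Lemma~\ref{lem_marg_cond}) matched against the corresponding trace/conditioning operations on the VRJP; but this forces one to work with the $\eta$-enriched family throughout (killing/boundary VRJP), so I would fall back on it only if the direct change of variables in (B) turned out intractable.

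\textbf{Stage (A).} First fix the time change defining the exchangeable time scale and note that on the finite connected graph $\mathcal{G}$ the VRJP is recurrent, so every local time $\theta_i+\ell_i^{Z}(t)$ diverges and the time change is globally well defined. Next compute, with respect to Lebesgue measure on the holding times, the density of a finite trajectory of $Z$ — a discrete skeleton $(\sigma_0=\delta,\sigma_1,\dots,\sigma_n)$ together with holding times $t_0,\dots,t_{n-1}$ at $\sigma_0,\dots,\sigma_{n-1}$. The key algebraic fact is that this density can be written as $\int(\prod_{k=0}^{n-1}Q_{\sigma_k,\sigma_{k+1}}e^{-\lambda_{\sigma_k}t_k})\,m(dQ)$ for an explicit probability measure $m$ on jump-rate matrices $Q$ (with exit rates $\lambda_i=\sum_j Q_{ij}$); equivalently, after integrating out the holding times, the skeleton weight depends only on the crossing numbers of the unoriented edges and on $(\sigma_0,\sigma_n)$, which is exactly the Diaconis--Freedman partial-exchangeability criterion for being a mixture of Markov chains. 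Since trajectory densities determine the law, this establishes that $Z$ is a mixture of Markov jump processes; reparametrising, $m$ is carried by the matrices $\demi W_{i,j}\psi_j/\psi_i$ with $\psi\in(\R_+^*)^U$, $\psi_\delta=1$, and one reads off its density in the $\psi$-coordinates. (The only substantial work here is the trajectory-weight computation and the de Finetti step; I would be content to cite \cite{ST15} for it.)

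\textbf{Stage (B).} It remains to identify the density of $m$ in the $\psi$-variables with the law of $\psi_i=((H_\beta)^{-1})_{i\delta}/((H_\beta)^{-1})_{\delta\delta}$ when $\beta\sim\nu_V^{W,\theta}$ (this is the unique $H_\beta$-harmonic function on $U$ normalised by $\psi_\delta=1$, as one checks from $H_\beta\psi|_U=0$). On the domain $\{H_\beta>0\}$ I would introduce the change of variables $\beta\mapsto(\psi_U,\gamma)\in\R^U\times\R_+^*$, where $\gamma$ is the remaining scalar degree of freedom, most naturally the Schur complement of the $U$-block of $H_\beta$ at $\delta$, i.e. $\gamma=(H_\beta)_{\delta\delta}-(H_\beta)_{\delta U}((H_\beta)_{UU})^{-1}(H_\beta)_{U\delta}=1/((H_\beta)^{-1})_{\delta\delta}$. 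One checks this is a diffeomorphism and computes its Jacobian. Substituting into \eqref{eq:1}: using $H_\beta\psi|_U=0$ and the definition of $\gamma$, the quadratic form $\langle\theta,H_\beta\theta\rangle$ splits into a term depending only on $(\psi_U,\theta)$ plus a term quadratic in a Gaussian-type variable built from $\gamma$, while $\sqrt{\det H_\beta}$, the product $\prod_i\theta_i$ and the Jacobian recombine; the $\gamma$-integral is then an explicit one-dimensional Gaussian (equivalently Gamma) integral, and what survives is precisely the Stage~(A) density in $\psi_U$. The normalisations automatically agree since both $\nu_V^{W,\theta}$ (Lemma~\ref{lem_beta}) and $m$ are probability measures. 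Finally, a rate matrix of the form $\demi W_{i,j}\psi_j/\psi_i$ determines $\psi$ given $\psi_\delta=1$ and conversely, so the two mixture representations coincide, which is the assertion $\P_\delta^{\operatorname{VRJP},\theta}(\cdot)=\int P_\delta^\psi(\cdot)\,\nu_V^{W,\theta}(d\beta)$.

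\textbf{Main obstacle.} The delicate part is Stage~(B): choosing the complementary variable $\gamma$ so that the Jacobian and the exponential factors of $\nu_V^{W,\theta}(d\beta)$ reorganise cleanly, and pushing through the linear-algebra identities relating $H_\beta$, its inverse, the Schur complement at $\delta$ and the harmonic extension $\psi$ — in particular the identity that re-expresses $\langle\theta,H_\beta\theta\rangle$ in terms of $\psi$, $\gamma$ and $(H_\beta)_{UU}$ and isolates the Gaussian in $\gamma$. A secondary technical point, inside Stage~(A), is making the passage from ``mixture of Markov chains for the skeleton'' to ``mixture of Markov jump processes for the full continuous-time path'' rigorous, and justifying the global time change via recurrence; here one must check that the holding-time rates in the mixture are a genuinely fixed (random) function of the environment rather than path-dependent, which is what the explicit trajectory-density formula secures.
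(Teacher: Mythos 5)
This statement is quoted in the paper as Theorem A from \cite{ST15} (Theorem 2) and \cite{STZ15} (Theorem 3); the paper itself gives no proof, only the citations. Your two-stage plan — (A) the trajectory-density/partial-exchangeability argument showing the time-changed VRJP is a mixture of Markov jump processes, and (B) the change of variables $\beta\mapsto(\psi_U,\gamma)$ with $\gamma$ the Schur complement at $\delta$ (equivalently $1/G(\delta,\delta)$, with $H_\beta\psi=\gamma e_\delta$ and an explicit inverse map $2\beta_i=(\mathds{1}_{i=\delta}+\sum_j W_{ij}\psi_j G(\delta,\delta))/(\psi_i G(\delta,\delta))$), integrating out the Gamma$(1/2)$-distributed variable $\gamma$ — is essentially the published proof of those two results, so it is correct in outline and coincides with the approach the paper relies on.
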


Remark that the random variables \((\beta_j)_{j\in U}\) appear as
asymptotic holding times of the VRJP.  Indeed, let \(N_i(t)\) be the
number of visits of vertex \(i\) by \(Z\) before time \(t\). Then, by
Theorem~\ref{VRJP}, the empirical holding times converge
\(\P^{VRJP,\theta}_\delta\) a.s., i.e. the following limit exists a.s.,
% , i.e. there exists some random variables
% \((\tau_{i})_{i\in V}\in \R^{U}\) such that
\[
\lim_{t\to\infty} {N_i(t)\over \ell^Z_i(t)}=\demi\sum_{j\sim i}
W_{i,j}{\psi_j\over \psi_i}=\beta_i,\;\;\; \forall i\in U,
\]
% and \(\left({1\over 2{\tau}}\right)_{|U}\) has law
% \(\nu^{W,\theta}_{V}(d\beta_{U})\). Moreover, conditionally on
% \((\tau_i)_{i\in V}\), \((Z_t)_{t\ge 0}\) is a Markov jump process
% with jumping rates
% \[
% \demi W_{i,j}{\psi_j\over \psi_i},
% \]
% where \(\psi\) is the unique function such that
% \(\psi_\delta=\theta_\delta\) and \((K_\tau\psi)_{|U}=0\).
and, by Lemma~\ref{lem_marg_cond} \eqref{item-2}, \(\beta_U\) has
% the marginal law of \(\nu_V^{W,\theta}\) on \(U\), i.e. the
law \(\nu_{U}^{W,\theta,\eta}\) where
\(\eta=W_{U,\delta} \theta_\delta\). Moreover, conditionally on
\(\beta_U\), the VRJP is a Markov Jump Process with jump rates given by
\eqref{jump_rate}.

Consider now the
S.D.E. \hyperref[SDE0]{\(E^{W_{U,U},\theta_U,\eta}_{U}(Y)\)} with same
parameters.  From Theorem~\ref{thm_main1}, the law
\(({1\over T_i})_{i\in U}\) coincides with that of
\(\beta_{U}\). Moreover, if we set
\[
\psi_j(\infty):=\lim_{t\to\infty}\psi_j(t), \;\;\; \forall j\in U,
\]
then \(\psi(\infty)=\left((H_{1\over
    2T})_{U,U}\right)^{-1}\eta\). Hence, it means that
\(\psi(\infty)\) coincides with the \(\psi\) of Theorem~\ref{VRJP} if
we identify \(\beta_{U}\) and \({1\over 2T}\). Hence, 
\((\beta_{U},\psi)\) of Theorem~\ref{VRJP} has the same law as
\(({1\over 2T},\psi)\) arising in the
S.D.E. \hyperref[SDE0]{\(E^{W,\theta,\eta}_{U}(Y)\)}.

There are remarkable similarities between Theorem~\ref{thm_main1} and
Theorem~\ref{VRJP}. Firstly, \((\beta_i)_{i\in U}\) are homogeneous to
the inverse of  time, and have same distribution in both
cases. Secondly, in both cases, a type of exchangeability appears in
the sense that, conditionally on the limiting holding times or hitting
times, the processes are simpler : in the case of the VRJP, it
becomes Markov; in the case of the S.D.E., the marginals are
independent and diffusion processes (in fact Bessel bridges).

% Remark that the function \(\psi\) that appears in relation with the
% VRJP is the same as the limiting value of the function \(\psi(t)\)
% that appears in \eqref{Xt}.  It is remarkable that a very similar
% picture appears both for the VRJP and the S.D.E. \eqref{Xt}. Indeed,
% the limit law of the holding times of the VRJP and the hitting times
% of the S.D.E.  are the same. Moreover, in both case, conditionally
% on these asymptotic values, the processes are "simpler" :
% conditionally on \(\tau_{|U}\), the VRJP is Markov, and in
% \eqref{Xt}, conditionally on \((T_i)_{i\in U}\), the coordinates of
% \(X(t)\) are independent and Markov.

In Section~\ref{sec_martingale}, we push forward this relation, by
explaining the martingale property that appears in \cite{ST15}, and
the exponential martingale property that extends it in \cite{DMR15},
by Theorem~\ref{thm_main1} and the Abelian properties of
Theorem~\ref{thm_abelian}.

Nevertheless, we do not yet clearly understand the relation between
the VRJP and the S.D.E. \hyperref[SDE0]{\(E^{W,\theta,\eta}_V\)}
beyond these remarks.

\subsection{Organization of the paper}
% \subsection*{A version \`a la Matsumoto-Yor ??}
In Section~\ref{sec_results_distrib}, we prove the properties related
to the distribution $\nu_V^{W,\theta, \eta}$, Lemma~\ref{lem_beta},
Lemma~\ref{lem_marg_cond} and Corollary~\ref{marginals-beta}.  In
Section~\ref{sec_key_formulas}, we present some simple key
computations that are used several times in the proofs.  In
Section~\ref{sec_lemme_SDE}, we prove the results concerning existence
and uniqueness of pathwise solution of the S.D.E.,
Lemma~\ref{def-SDE0}, and state and prove Proposition~\ref{statio} mentioned in Remark~\ref{rmk-prop} above.  Section~\ref{sec_proof_main_thm} is devoted to
the proof of the main Theorem~\ref{thm_main1}.  In
Section~\ref{sec_abelian}, we prove the Abelian properties of
Theorem~\ref{thm_abelian}.  Finally, in Section~\ref{sec_martingale},
we explain the relation between the Abelian properties of
Theorem~\ref{thm_abelian} and the martingale that appears in
\cite{SZ15}.

\section{Proof of the results concerning the distribution
  \(\nu_V^{W,\theta,\eta}\) : Lemma~\ref{lem_beta},
   Lemma~\ref{lem_marg_cond} and Corollary~\ref{marginals-beta}}
\label{sec_results_distrib}

Lemma~\ref{lem_beta} and Lemma~\ref{lem_marg_cond} are proved in
\cite{SZ15} (third arXiv version) in the case \(\theta_i=1\) for all
\(i\in V\), see Lemma~3 and Lemma~4 therein (see also \cite{letac2017multivariate}).  The case of general
$\theta$ can be deduced from the special case $\theta=1$ by a change
of variables. More precisely, setting \(\beta'_i=\theta_i^2\beta_i\),
\(W'_{i,j}= \theta_i\theta_j W_{i,j}\), and
\(\eta'_i=\theta_i \eta_i\), then we have
\[
\left<\theta, H_\beta\theta\right>= \left<1, H'_{\beta'} 1\right>,
\;\; \left<\eta, H_\beta^{-1}\eta\right>=\left<\eta', (H'_{\beta'})^{-1} \eta'\right>,
\;\; \left<\eta,\theta\right>=\left<\eta',1\right>\] where
\(H'_{\beta'}=2\beta'-W'\), so that
\(\beta\sim \nu^{W,\theta,\eta}_V\) if and only if
\(\beta'\sim \nu_V^{W',1,\eta'}\).

%Alternatively, one can check that the proofs of \cite{SZ15} work
%exactly in the same way with a general \(\theta\) instead of 1.

Corollary~\ref{marginals-beta} is a direct consequence of the
expression of the Laplace transform. Indeed, under $\nu^{W,\theta,\eta}_V$, the Laplace of the
marginal \(\beta_i-W_{i,i}\) is given for \(\zeta\in\R_+\) by
\[
\int \exp\left(\zeta(\beta_i-\demi W_{i,i})\right)\nu^{W,\theta,\eta}_V(d\beta)={\theta_i\over
  \sqrt{\theta_i^2+\zeta}}\exp\left(-\left(\sqrt{\theta_i^2+\zeta}-\theta_i\right)
  \left(\eta_i +\sum_{j\neq i}W_{i,j}\theta_j\right)\right).
\]
It coincides with the Laplace transform of the inverse of the Inverse
Gaussian density. % Indeed,
% % the density of the Inverse Gaussian with $\mu={
% \begin{align*}
% &\int_0^\infty
% \exp\left(-{\zeta\over 2x}\right)
% \left({\lambda\over 2\pi x^3}\right)^\demi \exp\left(-{\lambda\left(x-\mu\right)^2\over 2\mu^2x}\right) dx
% \\
% =&
% \exp\left({\lambda\over \mu}-\sqrt{{\lambda\over \mu^2}}\sqrt{\zeta+\lambda}\right)
% \int_0^\infty
% \left({\lambda\over 2\pi x^3}\right)^\demi \exp\left(-\demi \left(\sqrt{{\lambda\over \mu^2}}x-\sqrt{\zeta+\lambda}{1\over x}\right)^2
% \right) dx
% \\
% =&
% \exp\left(\sqrt{{\lambda\over \mu^2}}\left(\sqrt{\lambda}-\sqrt{\zeta+\lambda}\right)\right)
% {\sqrt{\lambda}\over \sqrt{\zeta+\lambda}}
% \int_0^\infty
% \left({\lambda'\over 2\pi x^3}\right)^\demi \exp\left(-{\lambda'\left(x-\mu'\right)^2\over 2(\mu')^2x}\right) dx
% \end{align*}
% where in the last line we set $\lambda'=\zeta+{\lambda\over
% \mu}$ and $\mu'={\sqrt{\zeta+{\lambda\over \mu}}\over
% \sqrt{\lambda}}
% \mu$.  The last integral is equal to 1 since it is the density of
% the inverse Gaussian law with parameters $(\mu',\lambda')$.
More precisely, by changing the parameter of Inverse Gaussian
distribution, we have
\[
\int_0^\infty \exp\left(-{\zeta\over 2x}\right) \left({\lambda\over
    2\pi x^3}\right)^\demi
\exp\left(-{\lambda\left(x-\mu\right)^2\over 2\mu^2x}\right) dx=
{\sqrt{\lambda}\over \sqrt{\zeta+\lambda}} \exp\left(-\sqrt{{\lambda\over
      \mu^2}}\left(\sqrt{\zeta+\lambda}-\sqrt{\lambda}\right)\right)
\]
It means that the law of \(2\beta_i-W_{i,i}\) coincides with the law of
the inverse of an inverse Gaussian random variable with parameters
\((\lambda,\mu)\) such that \(\lambda=\theta_i^2\) and
\(\sqrt{{\lambda}\over \mu^2}= \eta_i +\sum_{j\neq
  i}W_{i,j}\theta_j\).

\section{Simple key formulas}\label{sec_key_formulas}
Let us start by a remark. If $(t_i)\in
(\R_+)^V$ and  $K_t >0$, then the operator $H^{-1}_{1\over
  2t}$ is well-defined even when some of the
$t_i$'s vanish: indeed, using the identity
$$
H^{-1}_{1\over 2t}= K_{t}^{-1} t,
$$
the right-hand side is perfectly well-defined when
$K_t$ is invertible. In all the sequel, we will implicitly consider
that $H^{-1}_{1\over
  2t}$ is defined by this formula when some of the $t_i$'s vanish.

We prove below some simple formulas that will be key tools in
forthcoming computations.
\begin{lem}\label{Kt+s}
Let \((t^0_i)_{i\in V}\) and \((t^1_i)_{i\in V}\) be vectors in
\(\R_+^V\) such that $K_{t^0+t^1}>0$.
\begin{enumerate}[(i)]
\item
% Let \((t^0_i)_{i\in V}\) and \((t^1_i)_{i\in V}\) be vectors in
% \(\R_+^V\).
We have,
\begin{eqnarray}\label{lemme1_i}
  K_{t^0+t^1}=\widetilde K_{t^1}K_{t^0},
\end{eqnarray}
with
\[
\widetilde K_{t^1}= \operatorname{Id} - t^1 \widetilde W,\;\hbox{ where, }\;
\widetilde W=WK_{t^0}^{-1}
\]
Hence, we also have, with
$\widetilde H_{1\over 2 t^1}= {1\over t^1}-\widetilde W$, (where \(|H|:=\det H\))
\begin{eqnarray}\label{eq-det} {\left\vert H_{1\over 2(
      t^0+t^1)}\right\vert\over \left\vert \widetilde H_{1\over 2
      t^1}\right\vert}= \left(\prod_{i\in V} {t^1_i\over
    t^0_i+t^1_i}\right) \left\vert K_{1\over 2 t^0}\right\vert
\end{eqnarray}
\item Let
\begin{eqnarray*}
  \widetilde \eta=\eta+\widetilde W (t^0\eta),
\end{eqnarray*}
then,
\begin{eqnarray}\label{eta-tilde}
  \widetilde \eta = (t^0)^{-1} H_{1\over 2t^0}^{-1} \eta.
\end{eqnarray}
and,
\begin{eqnarray}\label{eta-H}
  \left<\widetilde \eta, (\widetilde H_{{1\over 2t^1}})^{-1}\widetilde\eta\right>=\left<\eta , (H_{{1\over 2(t^0+t^1)}})^{-1} \eta\right>- \left<\eta, (H_{1\over 2 t^0})^{-1}\eta\right>
\end{eqnarray}
\end{enumerate}
\end{lem}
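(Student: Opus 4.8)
The plan is to treat part (i) as a purely algebraic identity about matrices and then deduce the determinant formula \eqref{eq-det}, and finally verify the two vector identities in part (ii) by direct manipulation. None of this requires probability; everything is linear algebra with the operators $K_t = \operatorname{Id} - tW$ and $H_{1/2t} = \frac{1}{t} - W$, using throughout the convention $H^{-1}_{1/2t} = K_t^{-1}t$ recorded just before the lemma so that vanishing $t_i$'s cause no trouble.

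\textbf{Step 1: the product formula \eqref{lemme1_i}.} I would compute $\widetilde K_{t^1} K_{t^0}$ directly. Write $\widetilde W = W K_{t^0}^{-1}$, so $\widetilde K_{t^1} = \operatorname{Id} - t^1 W K_{t^0}^{-1}$. Then
\[
\widetilde K_{t^1} K_{t^0} = K_{t^0} - t^1 W K_{t^0}^{-1} K_{t^0} = (\operatorname{Id} - t^0 W) - t^1 W = \operatorname{Id} - (t^0 + t^1) W = K_{t^0+t^1}.
\]
The one subtlety is that $K_{t^0}$ must be invertible for $\widetilde W$ to make sense; but $K_{t^0+t^1} > 0$ together with the structure of these matrices (they are M-matrices once $K>0$, by the remark in Section~2) forces $K_{s t^0}$ to be invertible for all $s \in [0,1]$, in particular $K_{t^0}$; I would state this monotonicity of invertibility along the segment rather than re-prove it. Note also $\widetilde K_{t^1} = \operatorname{Id} - t^1 \widetilde W$ is exactly $t^1 \widetilde H_{1/2t^1}$ when the $t^1_i$ don't vanish, consistent with the relation $K_t = t H_{1/2t}$ quoted in the text.

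\textbf{Step 2: the determinant identity \eqref{eq-det}.} Taking determinants in \eqref{lemme1_i} gives $\det K_{t^0+t^1} = \det \widetilde K_{t^1} \cdot \det K_{t^0}$. Now I convert each $K$ into the corresponding $H$: using $K_t = t\, H_{1/2t}$ entrywise on the diagonal multiplier, $\det K_{t^0+t^1} = \left(\prod_i (t^0_i+t^1_i)\right)\det H_{1/2(t^0+t^1)}$, $\det \widetilde K_{t^1} = \left(\prod_i t^1_i\right)\det \widetilde H_{1/2t^1}$, and I leave $\det K_{t^0} = \det K_{1/2t^0}$ as a harmless relabeling of the product factor (matching the paper's notation $|K_{1/2t^0}|$ — here I should double-check the paper's indexing convention, since $K_{1/2 t^0}$ presumably just means $\det K_{t^0}$ written with the $\frac{1}{2t^0}$ bookkeeping). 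Dividing, the $\prod_i t^1_i$ cancels against part of $\prod_i(t^0_i+t^1_i)$ and rearranging yields $\frac{|H_{1/2(t^0+t^1)}|}{|\widetilde H_{1/2t^1}|} = \left(\prod_i \frac{t^1_i}{t^0_i+t^1_i}\right)|K_{1/2t^0}|$, which is exactly \eqref{eq-det}. Care with the case where some $t^0_i$ or $t^1_i$ vanish: then one interprets these identities through the $K$-form (Step 1) and passes to the $H$-form only formally, exactly as the preamble instructs.

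\textbf{Step 3: the drift identities \eqref{eta-tilde} and \eqref{eta-H}.} For \eqref{eta-tilde}, start from $\widetilde\eta = \eta + \widetilde W(t^0\eta) = \eta + W K_{t^0}^{-1}(t^0\eta)$. I want to show this equals $(t^0)^{-1} H_{1/2t^0}^{-1}\eta = (t^0)^{-1} K_{t^0}^{-1} t^0 \eta$. Multiply the target by $K_{t^0}$ on the left after conjugating: it is cleanest to verify $K_{t^0}\big(\eta + WK_{t^0}^{-1}t^0\eta\big) = K_{t^0}(t^0)^{-1}K_{t^0}^{-1}t^0\eta$ — the left side is $(\operatorname{Id}-t^0W)\eta + Wt^0\eta = \eta - t^0 W\eta + W t^0 \eta$, and since $t^0$ and $W$ need not commute this is not obviously $\eta$, so I would instead argue more carefully: rewrite $\widetilde\eta = (\operatorname{Id} + WK_{t^0}^{-1}t^0)\eta$ and observe $\operatorname{Id} + WK_{t^0}^{-1}t^0 = K_{t^0}^{-1}(K_{t^0} + W t^0) = K_{t^0}^{-1}(\operatorname{Id} - t^0 W + W t^0)$, which again has the non-commuting nuisance term. \emph{This mismatch signals that the correct reading involves $(t^0)^{-1}$ acting on the left}: indeed $(t^0)^{-1}H_{1/2t^0}^{-1} = (t^0)^{-1}K_{t^0}^{-1}t^0$ and one checks $(t^0)^{-1}K_{t^0}^{-1}t^0 = (t^0)^{-1}(\operatorname{Id}-t^0W)^{-1}t^0 = ((t^0)^{-1}(\operatorname{Id}-t^0W)t^0)^{-1} = (\operatorname{Id} - (t^0)^{-1}t^0 W t^0)^{-1}$... the clean route is: $(t^0)^{-1}K_{t^0}^{-1}t^0\eta = \big(t^0 K_{t^0} (t^0)^{-1}\big)^{-1}\eta$ is awkward too. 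The honest plan is to expand $K_{t^0}^{-1} = \sum_{n\ge 0}(t^0W)^n$ (valid in a neighborhood, then by analytic continuation / density) and match term by term; I expect that to work once the $(t^0)^{-1}(\cdots)t^0$ conjugation is handled with care. This bookkeeping — keeping straight which side the diagonal multiplications act on, since $t^0$ and $W$ do not commute — is the main obstacle in the whole lemma.

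\textbf{Step 4: identity \eqref{eta-H}.} Given \eqref{eta-tilde} and the factorization $\widetilde H_{1/2t^1} = (t^1)^{-1}\widetilde K_{t^1} = (t^1)^{-1}\widetilde K_{t^1}$ with $\widetilde K_{t^1} = K_{t^0+t^1}K_{t^0}^{-1}$ (from Step 1, solving for $\widetilde K_{t^1}$ — note $\widetilde K_{t^1} = K_{t^0+t^1}K_{t^0}^{-1}$ on the left, not the right, so I must be consistent), I substitute everything into the left-hand side $\langle \widetilde\eta, (\widetilde H_{1/2t^1})^{-1}\widetilde\eta\rangle$ and simplify. Concretely, $(\widetilde H_{1/2t^1})^{-1} = \widetilde K_{t^1}^{-1} t^1 = K_{t^0}K_{t^0+t^1}^{-1}t^1$, and $\widetilde\eta = (t^0)^{-1}K_{t^0}^{-1}t^0\eta$, so the left-hand side becomes a product of the form $\langle \eta, t^0 K_{t^0}^{-T}(t^0)^{-1} \cdot K_{t^0} K_{t^0+t^1}^{-1} t^1 \cdot (t^0)^{-1}K_{t^0}^{-1}t^0\eta\rangle$; using symmetry of $W$ (hence of $K$ and $H$ in the appropriate weighted sense) and telescoping the $K_{t^0}$ factors, this should collapse to $\langle \eta, H_{1/2(t^0+t^1)}^{-1}\eta\rangle - \langle\eta, H_{1/2t^0}^{-1}\eta\rangle$, the subtracted term arising from the identity $K_{t^0+t^1}^{-1} = K_{t^0}^{-1} - K_{t^0}^{-1}(t^1 W)K_{t^0+t^1}^{-1}$ (a resolvent-type expansion) or equivalently from $\widetilde K_{t^1}^{-1} = \operatorname{Id} + \widetilde W (t^1)(\cdots)$. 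I would organize Step 4 so that it is a one-line consequence of \eqref{lemme1_i}, \eqref{eta-tilde}, and the symmetry of the relevant quadratic forms, pushing all the non-commutative care into Step 3 where it belongs.
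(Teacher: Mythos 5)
Steps 1 and 2 are fine and are exactly the paper's route (multiply out $\widetilde K_{t^1}K_{t^0}$; take determinants and use $K_t=t\,H_{1/2t}$, which also confirms your reading that $\vert K_{1/2t^0}\vert$ in \eqref{eq-det} just means $\det K_{t^0}$). The genuine gap is Step 3: you never prove \eqref{eta-tilde}. Worse, the two manipulations you do write down are false for precisely the non-commutativity reason you flag: $K_{t^0}WK_{t^0}^{-1}t^0\eta$ is not $Wt^0\eta$, and $\operatorname{Id}+WK_{t^0}^{-1}t^0\neq K_{t^0}^{-1}(K_{t^0}+Wt^0)$, so both of your attempted reductions are invalid as written, and ending with ``expand $K_{t^0}^{-1}$ in a series, I expect it to work'' is a deferral, not a proof. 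The missing observation is a one-liner (and it is the paper's proof): write $\widetilde\eta=(t^0)^{-1}\bigl(\operatorname{Id}+t^0WK_{t^0}^{-1}\bigr)t^0\eta$ and note that $\bigl(\operatorname{Id}+t^0WK_{t^0}^{-1}\bigr)K_{t^0}=K_{t^0}+t^0W=\operatorname{Id}$, so $\operatorname{Id}+t^0WK_{t^0}^{-1}=K_{t^0}^{-1}$ and $\widetilde\eta=(t^0)^{-1}K_{t^0}^{-1}t^0\eta=(t^0)^{-1}H_{1/2t^0}^{-1}\eta$. Ironically, the conjugation route you abandoned was one cancellation from done: your expression $(\operatorname{Id}-(t^0)^{-1}t^0Wt^0)^{-1}$ is literally $(\operatorname{Id}-Wt^0)^{-1}$, and $\operatorname{Id}+WK_{t^0}^{-1}t^0=\operatorname{Id}+(\operatorname{Id}-Wt^0)^{-1}Wt^0=(\operatorname{Id}-Wt^0)^{-1}$ gives \eqref{eta-tilde} directly.

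Step 4 is also only a sketch (``should collapse''), but the plan is sound and matches the paper once \eqref{eta-tilde} is in hand: the paper computes $\widetilde H_{1/2t^1}^{-1}=K_{t^0}K_{t^0+t^1}^{-1}t^1$ and, writing $t^1=(t^0+t^1)-t^0$, gets $\widetilde H_{1/2t^1}^{-1}=t^0H_{1/2t^0}H_{1/2(t^0+t^1)}^{-1}H_{1/2t^0}t^0-t^0H_{1/2t^0}t^0$, then applies this to $\widetilde\eta=(t^0)^{-1}H_{1/2t^0}^{-1}\eta$ and uses symmetry of $H_{1/2t^0}$ to obtain \eqref{eta-H}; you would still need to carry out this computation (your ``resolvent'' identity is the right ingredient, but the cancellations have to be exhibited, since this is where a sign or a transpose could go wrong). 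A minor remark on Step 1: invertibility of $K_{t^0}$ needs no M-matrix or segment argument — $H_{1/2t^0}$ is obtained from $H_{1/2(t^0+t^1)}>0$ by adding the nonnegative diagonal $\frac{1}{2t^0}-\frac{1}{2(t^0+t^1)}$, hence is positive definite, and $K_{t^0}=t^0H_{1/2t^0}$ (with the usual $K$-form convention handling vanishing coordinates).
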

\begin{rmk}
One should not confound the \(\widetilde{W}\) in Lemma~\ref{Kt+s} (which is deterministic) with the \(\widetilde{W}^{(t^0)}\) in Theorem~\ref{thm_abelian}, which should be consider as a process.
\end{rmk}
\begin{proof}(i) We can write
\[
K_{t^0+t^1}=K_{t^0}-t^1W=(\operatorname{Id}-t^1W K^{-1}_{t^0})K_{t^0}=
\widetilde K_{t^1}K_{t^0}.
\]

(ii) Formula \eqref{eta-tilde} follows from
$$
\widetilde \eta = (t^0)^{-1}(\operatorname{Id}+t^0 W K_{t^0}^{-1})t^0\eta
= (t^0)^{-1} K_{t^0}^{-1} t^0\eta=(t^0)^{-1} H_{{1\over 2t^0}}^{-1}
\eta
$$
Turning to Formula \eqref{eta-H}, using \eqref{lemme1_i}, we have
\begin{eqnarray*}
% \label{product-K2}
  K_{t^0+t^1}^{-1}=K_{t^0}^{-1}\widetilde{K}_{t^1}^{-1}
\end{eqnarray*}
and
\begin{equation}
\label{eq-tilhtilbeta-1}
\begin{aligned}
\widetilde H_{{1\over 2t^1}}^{-1} &= K_{t^0}K_{t^0+t^1}^{-1}t^1
\\
&= K_{t^0}K_{t^0+t^1}^{-1}(t^0+t^1)({1\over t^0}-{1\over t^0+t^1})t^0
\\
&= t^0H_{1\over 2t^0}H_{1\over 2(t^0+t^1)}^{-1}({1\over t^0}-{1\over
  t^0+t^1})t^0
\\
&= t^0H_{1\over 2t^0}H_{1\over 2(t^0+t^1)}^{-1}(H_{1\over 2
  t^0}-H_{1\over 2(t^0+t^1)})t^0
\\
&= t^0H_{1\over 2t^0}H_{1\over 2(t^0+t^1)}^{-1}H_{1\over 2 t^0} t^0-
t^0H_{1\over 2t^0} t^0
\end{aligned}
\end{equation}
Now, \eqref{eta-tilde} implies
$$
\widetilde H_{1\over 2 t^1} \widetilde \eta = t^0H_{1\over 2t^0}H_{1\over
  2(t^0+t^1)}^{-1}\eta - t^0 \eta.
$$
Since $H_{1\over 2t^0}$ is symmetric, we get \eqref{eta-H} by \eqref{eta-tilde}.
% \begin{eqnarray*}
%\left<\widetilde \eta, (\widetilde H_{t^1})^{-1}\widetilde\eta\right>&=&
%\left<\eta , (H_{t^0+t^1})^{-1} \eta\right>- \left<\eta, (H_{t^0})^{-1}\eta\right>
%\\
%\left<\widetilde \eta(t), (\widetilde H_{\widetilde \beta}(t))^{-1}\widetilde\eta(t)\right>
%&=&
%\left<\widetilde \eta(t), (t\wedge T)(\beta(t)-\beta)H_{\beta}^{-1} \eta\right>
%\\
%&=&
%\left<\widetilde \eta(t), (t\wedge T)(H_{\beta(t)}-H_\beta)H_{\beta}^{-1} \eta\right>
%\\
%&=&
%\left<H_{\beta(t)} (t\wedge T)\widetilde \eta(t), H_{\beta}^{-1} \eta\right> - \left<(t\wedge T) \widetilde \eta(t),  \eta\right>
%\\
%&=&\left<\eta , (H_\beta)^{-1} \eta\right>- \left<\eta, H_{\beta(t)}^{-1}\eta\right>
%\end{eqnarray*}
\end{proof}

\section{Proof of basic properties of the
  S.D.E. \hyperref[SDE0]{\(E^{W,\theta,\eta}_V\)} :
proof of  Lemma~\ref{def-SDE0}.
%  Proposition~\ref{statio}.
  }
  \label{sec_lemme_SDE}
\begin{comment}
We prove a simple formula that will be used several times.
\begin{lem}
\label{Kt+s}
Let \((t^0_i)_{i\in V}\) and \((t^1_i)_{i\in V}\) be vectors in
\(\R_+^V\). Then,
\[
K_{t^0+t^1}=\widetilde K_{t^1}K_{t^0},
\]
with
\[
\widetilde K_{t^1}= \operatorname{Id} - t^1 \widetilde W,\;\hbox{ where, }\;
\widetilde W=WK_{t^0}^{-1}
\]
\end{lem}
\begin{proof}
We have
\[
K_{t^0+t^1}=K_{t^0}-t^1W=(\operatorname{Id}-t^1W K^{-1}_{t^0})K_{t^0}=
\widetilde K_{t^1}K_{t^0}.
\]
\end{proof}
\end{comment}

Remark that \eqref{lem_i} and \eqref{lem_ii} of Lemma~\ref{def-SDE0}
are equivalent since \(dX(t)=dY(t)-\eta dt\). In order to prove the
existence and uniqueness of the pathwise solution of \ref{SDE0} (or
equivalently \ref{SDE0X}), we first consider a non stopped version of
the S.D.E. \eqref{SDE0}, for which the existence and uniqueness is
simpler.
\begin{lem}\label{lem-finite-lc}
% Let \(t^0\in \R^V\) be such that \(K_{t^0}>0\).
Let \((\theta_i)_{i\in V}\in \R_+^V\).  Let \(h>0\) be the smallest
positive real such that \(\det(K_h)=0\).  Then, the following S.D.E. is
well-defined on time interval \([0,h)\) and has a unique pathwise
solution
\begin{gather}\label{SDE-intermediate}
% \label{Xt}\tag{\(E^{W,\theta}_{V_1}\)}
\widetilde{Y}_{i}(t)= \theta_i+
% &=d((Id-(t\wedge T) P)\psi_{t})
% \\
% &=-\mathds{1}_{t\wedge T}(P\psi_{t})dt+(I-(t\wedge T)P)K_{t}^{-1}\mathds{1}_{t<T}dB\\
B_i(t)-\int_0^t (WK_s^{-1} \widetilde{Y}(s))_ids\ \ \ \ \forall i\in V.
\end{gather}
Moreover, there exists a time \(\tau<h\) such that
\(\widetilde{Y}_i(\tau)=\tau \eta_i\) for some vertex \(i\in V\).
\end{lem}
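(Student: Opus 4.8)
The plan is to treat \eqref{SDE-intermediate} as a linear S.D.E.\ with time-dependent (but deterministic) coefficients on the interval $[0,h)$ and invoke classical existence and uniqueness theory. First I would observe that on any compact subinterval $[0,h']\subset[0,h)$, the matrix $K_s=\operatorname{Id}-sW$ is invertible with $\det K_s>0$ (by definition of $h$ as the smallest positive zero of $s\mapsto\det K_s$, together with $\det K_0=1>0$ and continuity), and $s\mapsto K_s^{-1}$ is continuous, hence bounded, on $[0,h']$. Therefore the drift coefficient $\widetilde{Y}\mapsto -WK_s^{-1}\widetilde{Y}$ is linear in $\widetilde{Y}$ with a locally bounded, continuous time-dependent coefficient matrix $-WK_s^{-1}$, and the diffusion coefficient is constant (the identity). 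Such linear S.D.E.s with locally bounded measurable coefficients satisfy the global Lipschitz and linear growth conditions on each $[0,h']$; standard results (e.g.\ \cite{Revuz-Yor}, Chapter IX, or It\^o's theory of linear equations) give a unique strong (pathwise) solution on $[0,h']$. Since $h'<h$ is arbitrary, patching these solutions yields a unique pathwise solution on $[0,h)$; no explosion occurs before $h$ because on each $[0,h']$ the solution is a genuine (non-exploding) Gaussian process, being the solution of a linear equation.

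Next I would establish the last assertion, namely that $\widetilde{Y}_i(\tau)=\tau\eta_i$ for some $i\in V$ and some $\tau<h$. The idea is that as $t\uparrow h$, the solution cannot stay in the region $\{\widetilde{Y}_i(t)>t\eta_i\ \forall i\}$. Introduce $\psi(t):=K_t^{-1}\widetilde{Y}(t)$ on $[0,h)$; a short It\^o computation (differentiating $K_t\psi(t)=\widetilde{Y}(t)$, using $\frac{d}{dt}K_t=-W$ and $\frac{d}{dt}K_t^{-1}=K_t^{-1}WK_t^{-1}$) shows that $\psi(t)=\theta+\int_0^t K_s^{-1}\,dB(s)$, so $\psi$ is a continuous local martingale with quadratic variation $\langle\psi,\psi\rangle_t=\int_0^t K_s^{-1}(K_s^{-1})^{\top}\,ds=\int_0^t K_s^{-2}\,ds$ (using $W$, hence $K_s$, symmetric). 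As $t\uparrow h$, the entries of $K_s^{-1}$ blow up (since $\det K_s\to 0$), so the quadratic variation of at least one coordinate of $\psi$ diverges, forcing $\psi$ (hence $\widetilde{Y}=K_t\psi(t)$) to oscillate and in particular to hit the hyperplane $\{\widetilde{Y}_i=t\eta_i\}$ before $h$. To make this rigorous with minimal fuss, I would argue by contradiction: if $\widetilde{Y}_i(t)>t\eta_i$ for all $i$ and all $t<h$, then $\psi(t)=K_t^{-1}\widetilde{Y}(t)$ stays in a region controlled by positivity of $K_t^{-1}$ (Perron--Frobenius / M-matrix structure: $K_t^{-1}$ has nonnegative entries for $t<h$), so each $\psi_i(t)$ remains bounded below; but a continuous martingale with quadratic variation tending to infinity cannot remain bounded below on $[0,h)$, a contradiction.

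The main obstacle I anticipate is the second part: cleanly ruling out the possibility that the solution avoids all the hyperplanes $\{\widetilde{Y}_i=t\eta_i\}$ on $[0,h)$. The linear-S.D.E.\ existence/uniqueness is routine, but the hitting statement requires quantitative control of the blow-up of $K_t^{-1}$ near $t=h$ and its interplay with the martingale $\psi$. The key structural input is that $W$ is a nonnegative symmetric irreducible matrix, so $K_t=\operatorname{Id}-tW$ is, for $t<h=1/\rho(W)$ (with $\rho$ the Perron eigenvalue), an invertible M-matrix with $K_t^{-1}\geq 0$ entrywise and with a uniformly positive Perron-direction component that diverges like $(1-t\rho(W))^{-1}$. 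Projecting $\psi$ onto the Perron eigenvector of $W$ gives a one-dimensional continuous martingale whose quadratic variation diverges as $t\uparrow h$; such a martingale is unbounded above and below on $[0,h)$, which — combined with $K_t\geq 0$ — forces some coordinate $\widetilde{Y}_i(t)-t\eta_i$ to change sign, hence to vanish, at some $\tau<h$. I would present this Perron-projection argument as the heart of the proof and relegate the M-matrix positivity facts to the reference \cite{berman1994nonnegative} already cited.
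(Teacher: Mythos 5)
Your proposal is correct, and the first half (existence and uniqueness via a linear S.D.E.\ with coefficients bounded on each $[0,h-\epsilon]$) is exactly the paper's argument. For the hitting assertion, however, you take a genuinely different route. The paper performs the space--time change $(h-t)Z_i\bigl(\tfrac{t}{h-t}\bigr)=\widetilde Y_i(t)$, $u=\tfrac{t}{h-t}$, sending $[0,h)$ to $[0,\infty)$, and then argues by contradiction that if no coordinate touches the line $y=\eta_i x$ then all $Z_i$ stay positive, while the blow-up $\min_{i,j}(K_s^{-1})_{i,j}\to\infty$ makes the drift of $Z$ eventually negative, so each $Z_i$ is stochastically dominated by a Brownian motion and must hit $0$ --- a contradiction. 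You instead exploit the drift-free process $\psi(t)=K_t^{-1}\widetilde Y(t)=\theta+\int_0^t K_s^{-1}\,dB(s)$ (the same computation as in Lemma~1~(iii), here with no stopping): on the event of no hitting one has $\widetilde Y(t)\ge t\eta\ge 0$, hence $\psi(t)\ge 0$ entrywise since $K_t^{-1}=\sum_k t^kW^k\ge 0$ for $t<h=1/\rho(W)$; projecting on the Perron eigenvector $v>0$ gives a scalar continuous local martingale with \emph{deterministic} quadratic variation $\lVert v\rVert^2\int_0^t(1-s\rho(W))^{-2}ds\to\infty$, which by Dambis--Dubins--Schwarz is a.s.\ unbounded below on $[0,h)$, contradicting nonnegativity. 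This buys you two things: you avoid the time change and the somewhat informal stochastic-domination step, and the needed blow-up is quantified exactly on the Perron direction ($K_s^{-1}v=(1-s\rho(W))^{-1}v$) rather than invoking divergence of all entries of $K_s^{-1}$ as the paper does without proof. The paper's argument, on the other hand, stays closer to the pathwise picture of the process being pushed down near $h$. Two minor points to include when writing it up: treat the trivial case $\theta_i=0$ (then $\tau=0$ works), and note that on the no-hitting event the strict inequality $\widetilde Y_i(t)>t\eta_i$ follows from continuity and $\widetilde Y_i(0)=\theta_i>0$, which is what yields $\psi\ge 0$.
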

\begin{proof}
As \(WK_t^{-1}\) is bounded on time interval \([0,h-\epsilon)\) for
all \(\epsilon>0\), it is a linear S.D.E with bounded coefficients
there is a unique pathwise solution, with continuous simple paths, by standard existence and uniqueness theorems on S.D.E.

To see the existence of \(\tau\), we can define \((Z_{t})_{t\ge 0}\)
by
\[(h-t)Z_{i}(\frac{t}{h-t})=\widetilde{Y}_{i}(t), \ \ \forall i\in V\] and
write \eqref{SDE-intermediate} as
\[(h-t)Z_{i}(\frac{t}{h-t})=\theta_{i}+B_{i}(t)-\int_{0}^{t}\left[
  WK_s^{-1}
  % (\operatorname{Id}-sW)^{-1}
  (h-s)Z(\frac{s}{h-s}) \right]_{i}ds.\] By time change
\(u=\frac{t}{h-t}\), the S.D.E. is written in the following equivalent
form
\[\frac{1}{u+1}Z_{i}(u)=\frac{\theta_{i}}{h}+\frac{1}{h}B_{i}(\frac{hu}{u+1})-\int_{0}^{u}\left[
  WK_{\frac{hv}{v+1}}^{-1}
  % (\operatorname{Id}-\frac{hv}{v+1}W)^{-1}
  Z(v) \right]_{i}\frac{1}{(v+1)^{2}}dv.\] That is
\begin{align}\label{dZiu}
  dZ_{i}(u)=\frac{1}{\sqrt{h}}d\widetilde B_{i}(u)+\frac{1}{u+1}\left( \left[\operatorname{Id}- WK_{\frac{hv}{v+1}}^{-1}
      % (\operatorname{Id}-\frac{hu}{u+1}W)^{-1}
  \right]Z(u) \right)_{i}du.
\end{align}
where $(\widetilde B_i(t))_{i\in V}$ is a $N$-dimensional Brownian motion. As \(t\to h\), we have \(u\to\infty\), and there exists $\tau<h$ such
that \(\widetilde{Y}_{i}(\tau)=\tau \eta_{i}\) if and only if there exists
$\tau'\in \R_+$ such that \(Z_{i}(\tau')=\tau' \eta_{i}\).  Assume by
contradiction that none of these \(Z_{i}\) reach the lines
\(y=\eta_{i}x\), in particular, they are all positive.  We use that
$K_s^{-1}$ has positive coefficients and that
$\lim_{s\to h} \min_{i,j} (K_s^{-1})_{i,j}=+\infty$, which implies
that for $u$ large enough
$ (\operatorname{Id}- WK_{\frac{hv}{v+1}}^{-1}) $ has negative
coefficients, hence the drift term in \eqref{dZiu} is negative. This
implies that $Z_{i}(u)$ given by \eqref{dZiu} is stochastically
bounded from above by a Brownian motion, at least for $u$ large enough. Hence, the processes $(Z_i(u))_{u\ge 0}$ reach 0 in finite time,
which leads to a contradiction.
% \( \left\Vert W(\operatorname{Id}-\frac{hu}{u+1}W)^{-1}
% \right\Vert\to \infty \) as \(u\to \infty\), this contradict
% \(Z_{i}(u)\ge 0,\ \forall i, \forall u\ge 0\).
\end{proof}
\begin{proof}[Proof of Lemma~\ref{def-SDE0}~\eqref{lem_i}]
We prove it by recurrence on the size of \(V\).
% , using Lemma \ref{lem-finite-lc}.
We will gradually define \(Y(t)\), solution to the equation
\eqref{SDE0} and \(X(t)=Y(t)-t\eta\).  Consider
\[
\tau=\inf\{t\ge 0, \; \exists i\in V\hbox{ such that } X_i(t)=0\}
\]
and denote by \(i_0\) the vertex in \(V\) such that
\(X_{i_0}(\tau)=0\).  Up to time \(\tau\), the equation \eqref{SDE0}
is equivalent to the equation \eqref{SDE-intermediate}, hence the
equation \eqref{SDE0} is well-defined and has unique pathwise solution
up to time \(\tau\) and \(\tau<\infty \) a.s.. Moreover,
\(T_{i_0}=\tau\).  Now we set \(U=\{i_0\}^c\) and
\[(\widetilde T_i)_{i\in V}=(T_i-\tau)_{i\in V}\]
\[
\widetilde W=WK^{-1}_{\tau}, \;\;\; \widetilde K_{s} = \operatorname{Id} -s
\widetilde W, \;\;\; \widetilde \eta= \eta+\widetilde W(\tau \eta).
\]
and use that, by \eqref{lemme1_i} applied to $t^0_i=\tau$ for all $i$,
and $t^1=s\wedge \widetilde T$,
\[
K_{(\tau+s)\wedge T}^{-1}= K^{-1}_{\tau}\widetilde K^{-1}_{s\wedge \widetilde
  T}.
\]
We set
\[ \widetilde X(s)=X(\tau+s), \;\;\; \widetilde B(s)=B(\tau+s).\] Hence, we
have that
\[ (\tau+s)\wedge T= \tau + s\wedge \widetilde T, \;\;\; WK^{-1}_{(\tau+s)\wedge T}=\widetilde W \widetilde K^{-1}_{s\wedge T}\] 
and
after time \(\tau\), \((X_{\tau+t})_{t\ge 0}\) is solution of
\hyperref[SDE0]{\(E^{W,\theta,\eta}_{V}(X)\)} if and only if
$\widetilde X(s)$ is solution of
\begin{align}\label{eq-tilde-X}
  d\widetilde X(s) = \mathds{1}_{s<\widetilde T} d\widetilde B(s)+\mathds{1}_{s<\widetilde T}\left(\widetilde W\widetilde K_{s\wedge \widetilde T}^{-1}
  \left(\widetilde X(s)+\tau \eta+(s\wedge \widetilde T)\eta\right)+\eta\right)ds.
\end{align}
Using that,
\begin{align*}
  &\widetilde W \widetilde K_{s\wedge\widetilde T}^{-1}\left(\widetilde X(s)+\tau\eta+ (s\wedge \widetilde T)\eta\right)
  \\
  =&
     \widetilde W \widetilde K_{s\wedge\widetilde T}^{-1}\left(\widetilde X(s)+\widetilde K_{s\wedge \widetilde T}(\tau\eta)+(s\wedge \widetilde T)\widetilde W(\tau \eta)+ (s\wedge \widetilde T)\eta\right)
  \\
  =&
     \widetilde W \widetilde K_{s\wedge\widetilde T}^{-1}\left(\widetilde X(s)+(s\wedge \widetilde T)\widetilde\eta\right)+\widetilde W(\tau\eta)
\end{align*}
we see that \eqref{eq-tilde-X} is equivalent to the fact that
$\widetilde X$ is solution of
\hyperref[SDE0X]{\(E_{V}^{\widetilde W,X(\tau),\widetilde \eta}(X)\)}. Since,
$X_{i_0}(\tau)=0$ is it equivalent to the fact that
$\widetilde X_{U}$ is solution of
\hyperref[SDE0X]{\(E_{U}^{\widetilde W,X(\tau),\widetilde \eta}(X)\)}.
Hence, we conclude by the recurrence hypothesis applied to $U$,
which implies that
\hyperref[SDE0X]{\(E_{U}^{\widetilde W,X(\tau),\widetilde \eta}(X)\)} has
a unique pathwise solution.
% From lemma \ref{def-SDE2}, it is equivalent to an equation
% \hyperref[Xt]{\(E_{V'_1}^{\widetilde W,X(\tau)}\)} for some new
% parameters \(\widetilde W\). By hypothesis of recurrence, since
% \(\vert V'_1\vert <\vert V_1\vert\), the equation
% \(E_{V_{1}'}^{\widetilde{W},X(\tau),\widetilde{\eta}}\) is well-defined, has
% a unique pathwise solution and the stopping times
% \((T_i)_{i\in V_1}\) are finite. Finally, it is clear that the lemma
% is true if \(V_1\) is empty, which initializes the recurrence. (Peut
% etre expliquer pour quoi solution forte.)
\end{proof}
\begin{proof}[Proof of Lemma~\ref{def-SDE0}~\eqref{lem_iii}]
Remark first that
\[ {\partial\over\partial t} K_{t\wedge T}^{-1}=K_{t\wedge T}^{-1}
\mathds{1}_{t<T} W K_{t\wedge T}^{-1}.
\]
Differentiating \(\psi(t)=K_{t\wedge T}^{-1}(Y(t))\), we get, 
%since the drift term cancel when we plug in \(dY(t)\),
\begin{align*}
  d\psi_i(t)&=
              (K_{t\wedge T}^{-1}(dY(t)))_i+\left(K_{t\wedge T}^{-1} \mathds{1}_{t<T} WK_{t\wedge T}^{-1}(Y(t))\right)_i dt
  \\
            &=(K_{t\wedge T}^{-1}( \mathds{1}_{t<T}dB(t)))_i
\end{align*}
Moreover, the quadratic variation of \(\psi_i(t)\) and \(\psi_j(t)\)
is given by
\begin{align*}
  \left<\psi_i, \psi_j\right>_{t}&= \sum_{l\in V} \int_0^t (K_{s\wedge T}^{-1})_{i,l} \mathds{1}_{s<T_l} (K_{s\wedge T}^{-1})_{j,l} ds
  \\
                                 &= \sum_{l\in V} \int_0^t (H_{{1 \over 2  (s\wedge T)}}^{-1})_{i,l} \left({1\over s\wedge T_l}\right)^2\mathds{1}_{s<T_l} (H_{{1\over 2(s\wedge T)}}^{-1})_{l,j} ds
  \\
                                 &=\int_0^t {\partial\over\partial s}(H_{{1\over 2(s\wedge T)}}^{-1})_{i,j} ds
  \\
                                 &=
                                   (H_{{1\over 2(t\wedge T)}}^{-1})_{i,j}
\end{align*}
where in the second equality, we used \(H_\beta\) is a symmetric
matrix and
\(H^{-1}_{1\over 2(t\wedge T)}=K_{t\wedge T}^{-1}(t\wedge T)\), and so
that
\(H^{-1}_{1\over 2(t\wedge T)}=(t\wedge T) (K_{t\wedge
  T}^{-1})^t\). In the last equality we used that
\(H^{-1}_{1\over 2(t\wedge T)}\) is well defined and null for \(t=0\).
\end{proof}
\section{Stationarity property}
%Similar arguments as above give the following result about stationarity of the equation.
%The result below give a counterpart of Theorem~\ref{thm_abelian}~\eqref{thm_abelian-iii} in the case where the time on each coordinate is the same; in this case the proof is much simpler and the equation is directed by the shifted Brownian motion. The result can then be interpreted as a dynamic evolution of the parameters along the trajectory (see remark below) and gives a better understanding of  Theorem~\ref{thm_abelian}~\eqref{thm_abelian-iii}.
\begin{prop}[Stationarity]\label{statio}
%\begin{enumerate}[(i)]
%\item\label{statio_i}
%Consider \((t^0_i)_{i\in V}\in(\R_+)^V\) a non-negative vector such
%that \(K_{t^0}\) is positive definite.  
%We denote by \(E^{W,\theta,\eta}_{V,t^0}\) the equation \hyperref[SDE0]{\(E^{W,\theta,\eta}_{V}\)} (respectively for \(Y\) or \(X\)) where \(K_{t\wedge T}\) is replaced by \(K_{t^0+t\wedge T}\) in
%(respectively) \eqref{psi0} or \eqref{psi0X}. Then, the equation \(E^{W,\theta,\eta}_{V,t^0}\)
%is equivalent to the equation \(E^{\widetilde W,\theta,\eta}_{V}\) where
%\[
%\widetilde W=WK_{t^0}^{-1}.
%\]
%\item
%\label{statio_ii} 
If \((X(t))_{t\ge 0}\) is the solution of
\eqref{SDE0X} and \(s\ge 0\), then \((X(t+s))_{t\ge 0}\) is solution
of the S.D.E.
\hyperref[SDE0X]{\(E^{\widetilde W^{(s)}, X(s), \widetilde \eta^{(s)}}_{V}(X)\)} directed by the
shifted brownian motion \((B(t+s))_{t\ge 0}\), and with
\[
\widetilde W^{(s)}= WK_{s\wedge T}^{-1},\;\; \widetilde \eta^{(s)}=\eta+\widetilde W^{(s)} ((s\wedge
T)\eta),
% \;\;\widetilde \eta(t)=\eta_{|V(t)}.
\]
 % Then, \((X(t+s))_{t\ge 0}\) is solution of the
% S.D.E. \hyperref[SDE0X]{\(E^{\widetilde W(t), X(t), \eta(t)}_{V}(X)\)}.
 %Set \(X(t)=Y(t)-(t\wedge T)\eta\) and \(\widetilde X(s)=X_{t+s}\). Then
 %\(\widetilde Y(s)=\widetilde X(s)+s\widetilde \eta\) is solution of the S.D.E.
 %\hyperref[Xt]{\(E^{\widetilde W(t), X(t), \eta(t)}_{V}\)}.
 %\hyperref[Xt]{\(E^{\widetilde W(t), T(t), \eta(t)}_{V}\)} where
 %\[
 %\widetilde W(t)= \left(P(K_{t\wedge T})^{-1}\right),\;\;
% X(t)=(Y(t)-(t\wedge T)\eta), \;\;\widetilde \eta(t)=\eta_{|V(t)}.
 %\]
% Hence, if \(V(s)=\{i\in V, \; T_i>s\}\), then, conditionally on
% \({\mathcal{F}}(s):=\sigma\{X(u), \; u\le s\}\),
% \(\left({1\over T_i-s}\right)_{i\in V(s)}\) has the law
% \(\nu_{V(s)}^{\widetilde W, X(s), \widetilde \eta}\)
%\end{enumerate}
\end{prop}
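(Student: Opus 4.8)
The plan is simply to verify, by direct substitution, that the shifted process $\widetilde X(u):=X(s+u)$ solves the announced equation when driven by the shifted Brownian motion $\widetilde B(u):=B(s+u)$; since the proposition only claims that $\widetilde X$ \emph{is} a solution, no uniqueness argument is needed. The computation is entirely parallel to the one carried out in the proof of Lemma~\ref{def-SDE0}~\eqref{lem_i}, except that here the shift is a deterministic time $s$ rather than the random hitting time $\tau$, so that in general no coordinate is forced to $0$ and the ambient dimension stays equal to $|V|$ (hence no reduction to a subset $U$ is performed).

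First I would introduce the shifted stopping times $\widetilde T_i:=T_i-s\wedge T_i=(T_i-s)^+$ and record the two elementary pointwise identities $(s+u)\wedge T=s\wedge T+u\wedge\widetilde T$ and $\mathds{1}_{s+u<T_i}=\mathds{1}_{u<\widetilde T_i}$ (both immediate by distinguishing $T_i>s$ from $T_i\le s$); in particular $\widetilde T_i$ is the first hitting time of $0$ by $\widetilde X_i$, and $\widetilde T_i<\infty$ a.s. Then, applying Lemma~\ref{Kt+s}~\eqref{lemme1_i} with $t^0=s\wedge T$ and $t^1=u\wedge\widetilde T$ yields $K_{(s+u)\wedge T}=\widetilde K_{u\wedge\widetilde T}\,K_{s\wedge T}$, where $\widetilde K_r=\operatorname{Id}-r\,\widetilde W^{(s)}$ and $\widetilde W^{(s)}=WK_{s\wedge T}^{-1}$, hence $K_{(s+u)\wedge T}^{-1}=K_{s\wedge T}^{-1}\widetilde K_{u\wedge\widetilde T}^{-1}$. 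Since both $K_{(s+u)\wedge T}$ and $K_{s\wedge T}$ are positive definite by Lemma~\ref{def-SDE0}, so is $\widetilde K_{u\wedge\widetilde T}=K_{(s+u)\wedge T}K_{s\wedge T}^{-1}$, so the process $\widetilde\psi$ below is well defined; moreover $X(s)\in(\R_+)^V$ and, since $K_{s\wedge T}^{-1}$ has nonnegative entries, $\widetilde\eta^{(s)}\in(\R_+)^V$, so these are admissible parameters for the family.

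The heart of the argument is then the algebraic rearrangement
\[
\psi(s+u)=K_{(s+u)\wedge T}^{-1}\bigl(X(s+u)+((s+u)\wedge T)\eta\bigr)
=K_{s\wedge T}^{-1}\widetilde K_{u\wedge\widetilde T}^{-1}\bigl(\widetilde X(u)+(s\wedge T)\eta+(u\wedge\widetilde T)\eta\bigr),
\]
into which one substitutes $(s\wedge T)\eta=\widetilde K_{u\wedge\widetilde T}\,(s\wedge T)\eta+(u\wedge\widetilde T)\,\widetilde W^{(s)}(s\wedge T)\eta$ and uses $\widetilde W^{(s)}(s\wedge T)\eta+\eta=\widetilde\eta^{(s)}$ to obtain $\widetilde K_{u\wedge\widetilde T}^{-1}\bigl(\widetilde X(u)+(s\wedge T)\eta+(u\wedge\widetilde T)\eta\bigr)=\widetilde\psi(u)+(s\wedge T)\eta$, where $\widetilde\psi(u):=\widetilde K_{u\wedge\widetilde T}^{-1}\bigl(\widetilde X(u)+(u\wedge\widetilde T)\widetilde\eta^{(s)}\bigr)$; consequently $W\psi(s+u)+\eta=\widetilde W^{(s)}\widetilde\psi(u)+\widetilde W^{(s)}(s\wedge T)\eta+\eta=\widetilde W^{(s)}\widetilde\psi(u)+\widetilde\eta^{(s)}$. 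Plugging this together with the indicator identity into $E^{W,\theta,\eta}_V(X)$ written from time $s$ onward, namely $d\widetilde X_i(u)=\mathds{1}_{u<\widetilde T_i}\,dB_i(s+u)-\mathds{1}_{u<\widetilde T_i}\,\bigl(W\psi(s+u)+\eta\bigr)_i\,du$ with $\widetilde X(0)=X(s)$, gives precisely the equation $E^{\widetilde W^{(s)},X(s),\widetilde\eta^{(s)}}_V(X)$ driven by $(B(s+u))_{u\ge 0}$. The only point that requires any care — the "main obstacle", modest as it is — is keeping track of the clipped time vectors $s\wedge T$ and $u\wedge\widetilde T$ and checking that the factorisation of Lemma~\ref{Kt+s} applies simultaneously at the stopped levels on every coordinate; once this bookkeeping is in place everything collapses to the one-line substitution above, exactly as in the proof of Lemma~\ref{def-SDE0}~\eqref{lem_i}.
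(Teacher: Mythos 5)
Your proposal is correct and follows essentially the same route as the paper's own proof: shift the process and Brownian motion, set $\widetilde T=T-s\wedge T$, factor $K_{(s+u)\wedge T}=\widetilde K_{u\wedge\widetilde T}K_{s\wedge T}$ via Lemma~\ref{Kt+s}, and absorb $(s\wedge T)\eta$ into the drift to identify $W\psi(s+u)+\eta=\widetilde W^{(s)}\widetilde\psi(u)+\widetilde\eta^{(s)}$. Your organisation of the algebra through $\psi(s+u)=K_{s\wedge T}^{-1}\bigl(\widetilde\psi(u)+(s\wedge T)\eta\bigr)$ is just a repackaging of the same substitution the paper performs directly on the drift term, so there is nothing genuinely different to compare.
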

\begin{rmk}
%Comparing \eqref{statio_i} with Theorem~\ref{thm_abelian}~\eqref{thm_abelian-iii}, we see when $\eta=0$, conditionally on $\fff^X(t^0)$, the process
%$(X(t^0+t))_{t\ge 0}$ has the same law as the solution of  \(E^{W,\theta,\eta}_{V,t^0\wedge T}\) defined in the proposition above, which means that the process
%$(X(t^0+t))_{t\ge 0}$ has the same law as if the time on each coordinate was started at $(t^0\wedge T)$ and position $X(t^0)$.
Proposition~\ref{statio} corresponds to Theorem~\ref{thm_abelian}~\eqref{thm_abelian-iii} in the case where all the coordinates of $(t^0_i)$ are equal to $s$, except that in this case the equation is directed by the shifted Brownian motion, which is not the case when coordinates are not all equal. The proof in this case is based on elementary computations and do not rely on the representation given in Theorem~\ref{thm_main1}. The result can then be interpreted as a dynamic evolution of the parameters along the trajectory: conditioned on the past, the futur of the trajectory is in the same family of S.D.E with deformed parameters.
%  and gives a better understanding of Theorem~\ref{thm_abelian}~\eqref{thm_abelian-iii}.
\end{rmk}
%\begin{rmk}
%The statement \eqref{statio_ii} must be compared with Theorem~\ref{thm_abelian}~\eqref{thm_abelian-iii}, see Remark~\ref{rmk-prop} above.
%\end{rmk}
\begin{proof}[Proof of Proposition~\ref{statio}]
%\eqref{statio_i} The equation \(E^{W,\theta,\eta}_{V,t^0}(Y)\)
%corresponds to the equation \ref{SDE0X} where \(K_{t\wedge T}\) is
%replaced by \(K_{t^0+t\wedge T}\). By Lemma~\ref{Kt+s}, we have that
%\[
%W K_{t^0+t\wedge T}^{-1}= \widetilde W \widetilde K_{t\wedge T}^{-1},
%\]
%where \(\widetilde W=WK_{t^0}^{-1}\) as defined in
%Proposition~\ref{statio}~\eqref{statio_i}, and
%\(\widetilde K_{t\wedge T}= \operatorname{Id}-(t\wedge T)\widetilde W\). The
%result follows.

%\eqref{statio_ii} 
Set \((\widetilde X(t))_{t\ge 0}:= (X(t+s))_{t\ge 0}\),
\((\widetilde B(t))_{t\ge 0}:= (B(t+s))_{t\ge 0}\), and
\(\widetilde T^{(s)}=T-s\wedge T\).  Remark that by Lemma~\ref{Kt+s}
$$
(s+t)\wedge T= s\wedge T+ t\wedge \widetilde T^{(s)}, \;\;\; WK_{(s+t)\wedge T}= \widetilde W^{(s)} \widetilde K_{t\wedge \widetilde T}^{(s)}
$$
with $\widetilde W^{(s)}$ defined in Proposition~\ref{statio} and \(\widetilde K_{t\wedge \widetilde T}^{(s)}= \operatorname{Id}-(t\wedge \widetilde T^{(s)})\widetilde W^{(s)}\).
The
S.D.E. \hyperref[SDE0X]{\(E_{V}^{W,\theta,\eta}(X)\)} after time \(s\)
is thus equivalent to
\[
d \widetilde X_i(t)= \mathds{1}_{t<\widetilde T^{(s)}_i}d\widetilde B_i(t)-
\mathds{1}_{t<\widetilde T^{(s)}_i} \left(\widetilde W^{(s)} (\widetilde K_{t\wedge\widetilde T}^{(s)})^{-1}\left(\widetilde X(t)+(s\wedge T)\eta+ (t\wedge \widetilde T^{(s)})\eta\right) +\eta\right)_idt,\;\;\; \forall i\in V,
\]
By Lemma~\ref{Kt+s}, we have that
\begin{align*}
  &\widetilde W^{(s)} (\widetilde K_{t\wedge\widetilde T}^{(s)})^{-1}\left(\widetilde X(t)+(s\wedge T)\eta+ (t\wedge \widetilde T^{(s)})\eta\right)
  \\
  =&
     \widetilde W^{(s)} (\widetilde K_{t\wedge\widetilde T}^{(s)})^{-1}\left(\widetilde X(t)+\widetilde K^{(s)}_{t\wedge \widetilde T}((s\wedge T)\eta)+(t\wedge \widetilde T^{(s)})\widetilde W^{(s)}((s\wedge T)\eta)+ (t\wedge \widetilde T^{(s)})\eta\right)
  \\
  =&
     \widetilde W^{(s)} (\widetilde K_{t\wedge\widetilde T}^{(s)})^{-1}\left(\widetilde X(t)+(t\wedge \widetilde T^{(s)})\widetilde\eta^{(s)}\right)+\widetilde W^{(s)}(s\wedge T)\eta
\end{align*}
Hence, \(\widetilde X(t)\) is solution of
\[
d\widetilde X(t)= \mathds{1}_{t<\widetilde T^{(s)}_i}d\widetilde B_i(t)-
\mathds{1}_{t<\widetilde T^{(s)}_i} \left(\widetilde W^{(s)} (\widetilde K_{t\wedge\widetilde
    T}^{(s)})^{-1}\left(\widetilde X(t)+ (t\wedge \widetilde T^{(s)})\widetilde\eta^{(s)}\right)
  +\widetilde \eta^{(s)}\right)_i dt,\;\;\; \forall i\in V,
\]
Since, \(\widetilde X(0)=X(s)\), we have the result.
\end{proof}

\section{Proof of Theorem \ref{thm_main1}}\label{sec_proof_main_thm}
We provide below a convincing but incomplete argument for the proof of Theorem~\ref{thm_main1}. We do not know yet how to turn this argument into a rigourous alternative proof, even though we think that it should be possible.
%  which might have the potential to be developed into an alternative proof. 
The rigorous proof is given in Section~\ref{rigorous_proof}.
\subsection{A convincing but incomplete argument for
  Theorem~\ref{thm_main1}~\eqref{thm_main_i}}
Let \(\lambda\in \R_+^V\) be a non negative vector on \(V\). As
\[
\exp\left(-\left<\eta, H_\beta^{-1} \lambda\right>-\demi\left<\lambda,
    H_\beta^{-1} \lambda\right>\right)\nu_V^{W,\theta,\eta}
=\exp\left(-\left<\lambda,\theta\right>\right)\nu_V^{W,\theta,\eta+\lambda},
\]
we have,
\begin{align}
  \label{equation_lambda}
  \int
  \exp\left(-\left<\eta, H_\beta^{-1} \lambda\right>-\demi\left<\lambda, H_\beta^{-1} \lambda\right>\right)\nu_V^{W,\theta,\eta}(d\beta)
  =
  \exp\left(-\left<\lambda,\theta\right>\right).
\end{align}
On the other hand, consider \(Y(t)\), solution of \ref{SDE0}, and the
associated processes \((X(t))\), \((\psi(t))\).  By
Lemma~\ref{def-SDE0} and \cite{Revuz-Yor} proposition 3.4 p 148, we
know that
\[
\exp\left(-\left<\lambda, \psi(t)\right>-\demi\left<\lambda,
    H_{{1\over 2( t\wedge T)}}^{-1} \lambda\right>\right),
\]
is a continuous martingale, dominated by 1. Moreover, we have that
\(X(t)\to 0\), a.s., when \(t\to \infty\), hence, a.s.,
\[
\lim_{t\to\infty} \psi(t)= K^{-1}_T(T\eta)=H_{{1\over 2 T}}^{-1} \eta.
\]
By dominated convergence theorem, it implies that
\[
\E\left(\exp\left(-\left<\lambda, H_{{1\over 2 T}}^{-1}\eta
    \right>-\demi\left<\lambda, H_{{1\over 2 T}}^{-1}
      \lambda\right>\right)\right)= \exp\left(-\left<\lambda, \psi(0)
  \right>\right)= \exp\left(-\left<\lambda, \theta
  \right>\right).
\]
Hence, it implies that both \(\beta\) under \(\nu^{W,\theta,\eta}_V\)
and \({1\over 2T}\) obtained from
\hyperref[SDE0]{\(E^{W,\theta,\eta}_V\)} satisfy the same functional
identity \eqref{equation_lambda}. Note that the dimension of the space of
variables $(\lambda_i)_{i\in V}$ and of the random variables
$(\beta_i)_{i\in V}$ are the same.
% this identity take \(|V|\) random variable \((\beta_{i})_{i\in V}\)
% and produce a function with \(|V|\) variable
% \((\lambda_{i})_{i\in V}\) (as in the case of Laplace transform);
Nevertheless, it is not clear wether the functional identity
\eqref{equation_lambda} characterizes the distribution
\(\nu_V^{W,\theta,\eta}\), at least we have no proof of this fact.  If
such an argument were available, it would imply
Theorem~\ref{thm_main1}~\eqref{thm_main_i} also : indeed, using the
stationarity of the equation, Proposition~\ref{statio}, it would be
possible to deduce Theorem~\ref{thm_main1}~\eqref{thm_main_ii} by
enlargement of filtration (see \cite{jeulin2006grossissements}). We do
not give the detail of the argument here since the first part of the
proof is missing.

\subsection{Proof}\label{rigorous_proof}
Even if it is not obvious at first sight since the context is very different, the strategy of the proof of Theorem~\ref{thm_main1} is quite in the spirit
of the proof of Theorem 2, ii) of \cite{ST15} : we start from the
mixture of Bessel processes and we prove that this mixture has the
same law as the solutions of the S.D.E. \ref{SDE0X}. We use in a
crucial way the fact that the law $\nu^{W,\theta, \eta}_V$ is a
probability density with explicit normalizing constant. 
%To warm up, we first illustrate the strategy in the case \(N=1,\) the general proof follows the same idea but more sophisticated computations are involved.

\subsubsection{The classical statement for \(N=1\)}
We denote by \({\bf W}=C(\R_+,\R)\) the Wiener space.  For
\(\theta>0\), we denote by \(\P_\theta\) the law of \(X_{t\wedge T}\)
where \(X_t=\theta+B_t\) and \(B_t\) is a standard brownian motion and
\(T=\inf\{t\ge 0, \; X_t=0\}\) is the first hitting time of \(0\).  We
denote by \(\B_{\theta,0}^{3,T}\) the law of the 3-dimensional Bessel
Bridge from \(\theta>0\) to \(0\) on time interval \([0,T]\), as
defined in \cite{Revuz-Yor}, section XI-3. We always consider that the
Bessel bridge is extended to time interval \(\R_+\), with constant
value equal to 0 after time \(T\).  As mentioned in the introduction
it is known (see \cite{MR0350881}~\cite{Revuz-Yor}, p317), that,
under \(\P_{\theta}\), \({1\over 2T}\) has the law
Gamma\((\demi,{\theta^{2}\over 4})\) and that, conditionally on \(T\),
\((X_t)_{t\ge 0}\) has law \(\B_{\theta,0}^{3,T}\). Otherwise stated
it means that the following equality of probabilities holds on the Wiener space ${\bf W}$:
\begin{align}\label{one-dim}
  \P_{\theta}(\cdot )= \int_0^\infty \B_{\theta,0}^{3,T}(\cdot) {1\over \sqrt{2\pi}}{\theta\over T^{3/2}}e^{-{\theta^2\over 2T}} dT
\end{align}
% We give a short proof of this last assertion, which is in the spirit
% of the general proof.

% ****** A faire (ou pas...) *************

\subsubsection{Proof of Theorem \ref{thm_main1} (i) and (ii)}
We use the formulation of Lemma~\ref{def-SDE0}~(\ref{lem_ii}), and we
will prove that if \((X_i(t))_{i\in V}\) satisfies \eqref{SDE0X}, then
\(\beta:={1\over 2T}\) is distributed as \(\nu_V^{W,\theta,\eta}\) and
conditionally on \(T\), the coordinates \((X_i(t))_{t\ge 0}\) are
independent 3-dimensional Bessel bridges from \(\theta_i\) to \(0\) on time
interval \([0,T_i]\).

Recall that \(V=\{1,\ldots, N\}\), and denote by
\({\bf W}_V=C(\R_+,\R^V)\) the \(N\)-dimensional Wiener space and
\((X(t))_{t\ge 0}\) the canonical process.  For
\(\theta=(\theta_i)_{i\in V}\in \R_+^V\), we set
\[
\P_{V, \theta}=\otimes_{i\in V} \P_{\theta_i},
\]
the probability on \({\bf W}_V\) such that \((X_i(t))_{i\in V}\) are
\(N\) independent Brownian motions starting at positions
\((\theta_i)\) and stopped at their first hitting times of \(0\).  The
assertions of Theorem~\ref{thm_main1}~\eqref{thm_main_i}
and~\eqref{thm_main_ii} are equivalent to the fact that the law of the
solution of the S.D.E. \eqref{SDE0X} is a mixture of independent
Bessel bridges \(\B^{3,{1\over 2\beta_i}}_{\theta_i,0}\) where
\(\beta\) is a random vector with distribution
\(\nu_V^{W,\theta,\eta}\). Otherwise stated, it means that the
probability distribution $\overline\P^{W,\theta,\eta}_{V}$ defined by
\[
\overline\P^{W,\theta,\eta}_{V}(\cdot):= \int \left( \otimes_{i\in V}
  \B_{\theta_i,0}^{3,{1\over 2 \beta_i}}\right)(\cdot)
\nu_{V}^{W,\theta,\eta}(d\beta),
\]
% then we need to show that \(\overline{P}_{V}^{W,\theta,\eta}\)
is the law of the solution of the S.D.E. \eqref{SDE0X}.  The strategy
is now to write the Radon-Nikodym derivative of
\(\overline\P^{W,\theta,\eta}_{V}\) with respect to \(\P_{V,\theta}\)
as an exponential martingale, and then to apply Girsanov's theorem.
% (to show that a process \(X\) with law
% \(\overline\P^{W,\theta,\eta}_{V}(\cdot)\) is effectively solution
% to the S.D.E.).

In the sequel, we adopt the following notations:
\[
T:={1\over 2\beta},\;\;{\hbox{ so that }}\;\; H_\beta={1\over T}K_T.
\]
From \eqref{one-dim}, it is clear that
\(\overline\P^{W,\theta,\eta}_{V}\) is absolutely continuous with
respect to \( \P_{V, \theta}\), and changing from variables \(\beta\)
to \(T\) in \(\nu_{V}^{W,\theta,\eta}(d\beta)\), we get that
\begin{equation}
\label{chgt-density}
\begin{aligned}
d{\overline\P^{W,\theta,\eta}_{V}\over \P_{V,
    \theta}}&=\mathds{1}_{H_{\frac{1}{2T}}>0}\cdot
e^{-\demi\left<\theta, H_{1\over 2T} \theta\right>+\demi\left<\theta,
    {1\over T}\theta\right>-\demi\left<\eta,(H_{1\over
      2T})^{-1}\eta\right>+\left<\eta,\theta\right>}{\prod_{i\in V}
  T^{-1/2}_i\over \sqrt{\vert H_{1\over 2T}\vert}}.
\\
&=\mathds{1}_{H_{\frac{1}{2T}}>0}\cdot \exp\left({\demi\left<\theta, W
    \theta\right>-\demi\left<\eta, K_T^{-1}
    T\eta\right>+\left<\eta,\theta\right>}\right){1\over \sqrt{\vert
    K_{T}\vert}}.
\end{aligned}
\end{equation}
Let \(t>0\),
% and consider a bounded measurable test function
% \(\psi((X_s)_{s\le t})\) of the trajectory up to time \(t\).
define
\[
\begin{cases}
V(t):=\{i\in V, \; T_i>t\},\\
\beta^{(t)}:={1\over 2 (t\wedge T)}\\
\widetilde W^{(t)}:=WK_{t\wedge T}^{-1}=W+WK_{t\wedge T}^{-1}(t\wedge T) W
\\
\widetilde \eta^{(t)}:=\eta+\widetilde W^{(t)}(t\wedge T)\eta
\end{cases}
\]
where the third equality comes from the fact that
\(K_{t\wedge T}^{-1}=\operatorname{Id} +(t\wedge T)WK_{t\wedge
  T}^{-1}\).  Note that
\(\widetilde W^{(t)}\) is symmetric since
\(K_{t\wedge T}^{-1}(t\wedge T)=H_{{1\over 2(t\wedge T)}}^{-1}\). We
also set,
\[
\begin{cases}
\widetilde T^{(t)}:= T -t\wedge T,\\  \widetilde \beta^{(t)}:={1\over 2\widetilde T^{(t)}},\\  
\widetilde K^{(t)}_{\widetilde T}:= \operatorname{Id} -\widetilde T^{(t)}\widetilde W^{(t)},\\
\widetilde H_{\widetilde \beta}^{(t)}:=2\widetilde \beta^{(t)}-\widetilde W^{(t)}={1\over \widetilde T^{(t)}}\widetilde K^{(t)}_{\widetilde T},\\
%\widetilde \eta^{(t)}:=\eta+\widetilde W^{(t)}(t\wedge T)\eta
% = (t\wedge T)^{-1}K_{t\wedge T}^{-1}(t\wedge T) \eta=(t\wedge
% T)^{-1}H_{\beta(t)}^{-1}\eta .
\end{cases}
\]
Note that \((\widetilde H_{\widetilde \beta}^{(t)})^{-1}\) is well defined for
all \(t\) using
\((\widetilde H_{\widetilde \beta}^{(t)})^{-1}=(\widetilde K^{(t)}_{\widetilde T})^{-1}
\widetilde T^{(t)}\), see beginning of section \ref{sec_key_formulas}.  By
Equation (\ref{eta-tilde}) applied with $t^0=t\wedge T$ and $t^1=\widetilde T^{(t)}$,
we get that
\begin{equation}
\label{tilde_eta}
\begin{aligned}
\widetilde
 \eta^{(t)}%=(t\wedge T)^{-1}(\operatorname{Id}+(t\wedge T)WK_{t\wedge T}^{-1})(t\wedge T)\eta
% &=(t\wedge T)^{-1}K_{t\wedge T}^{-1}(t\wedge T) \eta \\&
=(t\wedge T)^{-1}H_{\beta^{(t)}}^{-1}\eta .
\end{aligned}
\end{equation}
We first prove the following lemma.
\begin{lem}\label{calcul-Ito}
Let
\begin{eqnarray*}
  M_t=
  \exp\left(-\demi\left<X(t), \widetilde W^{(t)} X(t)\right>+\demi\left<\widetilde \eta^{(t)}, (\widetilde H_{\widetilde \beta}^{(t)})^{-1}\widetilde \eta^{(t)}\right>-\left<\widetilde \eta^{(t)},X(t)\right>\right)
  {\sqrt{\vert \widetilde K^{(t)}_{\widetilde T}\vert}}.
\end{eqnarray*}
Under \(\P_{V,\theta}\), we have
\begin{align}
  {M_t\over M_0}=\exp\left(-\int_0^t \left<W\psi(s)+\eta, dX_s\right>-\demi\int_0^t\left<W\psi(s)+\eta, \mathds{1}_{s<T}(W\psi(s)+\eta)\right>ds\right)
\end{align}
with
\[
\psi(t)=K_{t\wedge T}^{-1}(X(t)+(t\wedge T)\eta).
\]
\end{lem}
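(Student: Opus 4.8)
The plan is to compute $\log M_t$ as a semimartingale and identify its It\^o decomposition. First I would observe that $M_t$ is built from the three "natural" quadratic quantities attached to the deformed parameters $(\widetilde W^{(t)}, X(t), \widetilde\eta^{(t)})$, namely $-\demi\langle X(t),\widetilde W^{(t)}X(t)\rangle$, $\demi\langle\widetilde\eta^{(t)},(\widetilde H^{(t)}_{\widetilde\beta})^{-1}\widetilde\eta^{(t)}\rangle$, and $-\langle\widetilde\eta^{(t)},X(t)\rangle$, together with the determinant factor $\sqrt{|\widetilde K^{(t)}_{\widetilde T}|}$. So I would write $\log(M_t/M_0)$ as a sum of four increments and differentiate each one in $t$ using It\^o's formula, keeping careful track of the fact that $t\mapsto t\wedge T_i$ has derivative $\mathds{1}_{t<T_i}$, so that $\partial_t K_{t\wedge T}^{-1}=K_{t\wedge T}^{-1}\mathds{1}_{t<T}WK_{t\wedge T}^{-1}$ as already recorded in the proof of Lemma \ref{def-SDE0}(iii). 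Under $\P_{V,\theta}$ each $X_i$ is a Brownian motion stopped at $T_i$, so $dX_i(t)=\mathds{1}_{t<T_i}dB_i(t)$ and $d\langle X_i,X_j\rangle_t=\delta_{ij}\mathds{1}_{t<T_i}\,dt$; this is what produces the It\^o correction terms.

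The key algebraic input is Lemma \ref{Kt+s} applied with $t^0=t\wedge T$ and $t^1=\widetilde T^{(t)}$, which gives $\widetilde K^{(t)}_{\widetilde T}K_{t\wedge T}=K_T$ (a $t$-independent matrix!), hence $\log|\widetilde K^{(t)}_{\widetilde T}|=\log|K_T|-\log|K_{t\wedge T}|$, and therefore $\demi\partial_t\log|\widetilde K^{(t)}_{\widetilde T}|=-\demi\,\partial_t\log|K_{t\wedge T}|=-\demi\,\mathrm{Trace}(K_{t\wedge T}^{-1}\mathds{1}_{t<T}W)$. Similarly, Formula \eqref{eta-H} of Lemma \ref{Kt+s}, again with $t^0=t\wedge T$, $t^1=\widetilde T^{(t)}$, shows that $\langle\widetilde\eta^{(t)},(\widetilde H^{(t)}_{\widetilde\beta})^{-1}\widetilde\eta^{(t)}\rangle=\langle\eta,H_{1/2T}^{-1}\eta\rangle-\langle\eta,H_{1/2(t\wedge T)}^{-1}\eta\rangle$; since the first term is $t$-independent, its $t$-derivative is $-\partial_t\langle\eta,H_{1/2(t\wedge T)}^{-1}\eta\rangle=-\langle\eta,K_{t\wedge T}^{-1}\mathds{1}_{t<T}\eta\rangle$ (using $H_{1/2(t\wedge T)}^{-1}=K_{t\wedge T}^{-1}(t\wedge T)$ and the same derivative rule). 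These two observations collapse what look like messy expressions into clean drift terms. For the remaining piece $-\demi\langle X(t),\widetilde W^{(t)}X(t)\rangle-\langle\widetilde\eta^{(t)},X(t)\rangle$ I would expand $d$ of each term: the $dX$-terms will assemble, after using $\psi(t)=K_{t\wedge T}^{-1}(X(t)+(t\wedge T)\eta)$ and $\widetilde W^{(t)}=WK_{t\wedge T}^{-1}$ together with $\widetilde\eta^{(t)}=(t\wedge T)^{-1}H_{\beta^{(t)}}^{-1}\eta = \eta + WK_{t\wedge T}^{-1}(t\wedge T)\eta$ from \eqref{tilde_eta}, into $-\langle W\psi(s)+\eta,dX_s\rangle$; the It\^o second-order terms will assemble into $-\demi\langle W\psi(s)+\eta,\mathds{1}_{s<T}(W\psi(s)+\eta)\rangle\,ds$ after cancelling against the $\demi\,\mathrm{Trace}(K_{t\wedge T}^{-1}\mathds{1}_{t<T}W)$ and $-\langle\eta,K_{t\wedge T}^{-1}\mathds{1}_{t<T}\eta\rangle$ drifts coming from the determinant and the $\widetilde\eta$-quadratic term.

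The main obstacle I anticipate is purely bookkeeping: correctly differentiating $\widetilde W^{(t)}$ and $\widetilde\eta^{(t)}$ in $t$ (they depend on $t$ through $K_{t\wedge T}^{-1}$) and keeping the It\^o cross-terms between "$dX$" and "$d\widetilde W^{(t)}$" organized, since $X(t)$ appears quadratically with a time-dependent matrix. The cleanest route is probably to first use Lemma \ref{Kt+s} and \eqref{tilde_eta}--\eqref{eta-H} to rewrite the three scalar terms in $M_t$ in a form where the only explicitly $t$-dependent objects are $X(t)$, $t\wedge T$, and $K_{t\wedge T}^{-1}$, then differentiate once and watch the pieces collapse. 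I do not expect any genuine conceptual difficulty here — the content of the lemma is that the exponential weight \eqref{chgt-density} defining $\overline\P^{W,\theta,\eta}_V/\P_{V,\theta}$, when localized at time $t$ via the deformed parameters, is exactly the Girsanov exponential for the drift $-(W\psi+\eta)$, which is forced by the structure of the problem; the computation is a verification.
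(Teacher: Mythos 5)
Your strategy is exactly the paper's: compute $d\ln M_t$ by It\^o's formula using $\partial_t K_{t\wedge T}^{-1}=K_{t\wedge T}^{-1}\mathds{1}_{t<T}WK_{t\wedge T}^{-1}$, handle the determinant factor and the $\widetilde\eta$-quadratic term via Lemma~\ref{Kt+s} applied with $t^0=t\wedge T$, $t^1=\widetilde T^{(t)}$ (so that $\widetilde K^{(t)}_{\widetilde T}K_{t\wedge T}=K_T$ is constant in $t$), and assemble everything through $W\psi(t)+\eta=\widetilde W^{(t)}X(t)+\widetilde\eta^{(t)}$. That is precisely how the proof in Section~\ref{sec_proof_main_thm} proceeds.

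However, one intermediate formula you assert is wrong and, taken literally, would block the announced completion of the square. You claim $\partial_t\left<\eta,H_{1/2(t\wedge T)}^{-1}\eta\right>=\left<\eta,K_{t\wedge T}^{-1}\mathds{1}_{t<T}\eta\right>$ "using $H_{1/2(t\wedge T)}^{-1}=K_{t\wedge T}^{-1}(t\wedge T)$". Differentiating that product correctly gives $\partial_t\bigl(K_{t\wedge T}^{-1}(t\wedge T)\bigr)=K_{t\wedge T}^{-1}\mathds{1}_{t<T}WK_{t\wedge T}^{-1}(t\wedge T)+K_{t\wedge T}^{-1}\mathds{1}_{t<T}$: you dropped the $W$-term. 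Equivalently, using \eqref{partial_H-1} together with \eqref{tilde_eta}, the correct derivative is $\left<\widetilde\eta^{(t)},\mathds{1}_{t<T}\widetilde\eta^{(t)}\right>$, and this exact form is needed: the finite-variation quadratic pieces are $-\demi\left<\widetilde W^{(t)}X,\mathds{1}_{t<T}\widetilde W^{(t)}X\right>-\left<\widetilde\eta^{(t)},\mathds{1}_{t<T}\widetilde W^{(t)}X\right>-\demi\left<\widetilde\eta^{(t)},\mathds{1}_{t<T}\widetilde\eta^{(t)}\right>$, which complete to $-\demi\left<W\psi+\eta,\mathds{1}_{s<T}(W\psi+\eta)\right>$, whereas with $-\demi\left<\eta,K_{t\wedge T}^{-1}\mathds{1}_{t<T}\eta\right>$ in its place the square does not close. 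A smaller point of the same kind: the determinant factor must contribute $+\demi\trace(\mathds{1}_{t<T}\widetilde W^{(t)})\,dt$ so as to cancel the It\^o correction $-\demi\trace(\widetilde W^{(t)}\mathds{1}_{t<T})\,dt$ coming from $-\demi\left<X,\widetilde W^{(t)}X\right>$; your sketch writes this drift with one sign when differentiating and with the opposite sign when invoking the cancellation. Both issues are bookkeeping slips inside the right argument, not a conceptual gap, but they must be repaired for the verification to go through.
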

\begin{proof}[Proof of Lemma~\ref{calcul-Ito}]
We will compute the It\^o derivative of \(\ln M_{t}\), the following
formulae will be used several times
\begin{align}\label{partial_K_t}
  {\partial\over\partial t} K_{t\wedge T}=- \mathds{1}_{t<T} W, \;\;\;  {\partial\over\partial t} K_{t\wedge T}^{-1}= K_{t\wedge T}^{-1}\mathds{1}_{t<T} WK_{t\wedge T}^{-1}, \;\;
  {\partial\over\partial t}\widetilde W^{(t)}=\widetilde W^{(t)}\mathds{1}_{t<T}\widetilde W^{(t)}.
\end{align}
% Setting \(\psi(t)= (K_{t\wedge T})^{-1} X(t)\), we have also
% \[
% \widetilde W^{(t\wedge T)}X(t)=W\psi(t),
% \]
\begin{align}\label{partial_H-1}
% \label{partial_H_t}
  {\partial \over \partial t} H_{\beta^{(t)}}^{-1}&=H_{\beta^{(t)}}^{-1}\mathds{1}_{t<T}\left({1\over t\wedge T}\right)^2 H_{\beta^{(t)}}^{-1}
\end{align}
By \eqref{partial_K_t} and It\^o formula, we have
\begin{eqnarray}
  \nonumber
  &&d\left<X(t), \widetilde W^{(t)} X(t)\right>\\
  &=&
      \label{form_1} 2\left<dX(t), \widetilde W^{(t)}X(t)\right>+\left<\widetilde W^{(t)} X(t), \mathds{1}_{t<T}  \widetilde W^{(t)}X(t)\right>dt+\trace(\widetilde W^{(t)}\mathds{1}_{t<T}) dt
\end{eqnarray}
where in the second term we used that the operator \(\widetilde W^{(t)}\) is
symmetric.
\begin{comment}
% Next, we show
% \begin{lem}
% For all \(t\ge 0\)
% \begin{eqnarray}
%\label{tildeH}
%(\widetilde H_{\widetilde \beta}^{(t\wedge T)})^{-1}= (t\wedge T)(\beta(t)-\beta)(H_{\beta})^{-1}H_{\beta(t)}(t\wedge T)
%\end{eqnarray}
%\begin{eqnarray}\label{crochetH}
%\left<\widetilde \eta^{(t\wedge T)}, (\widetilde H_{\widetilde \beta}^{(t\wedge T)})^{-1}\widetilde \eta^{(t\wedge T)}\right>=\left<\eta , (H_\beta)^{-1} \eta\right>- \left<\eta, (H_{\beta(t)})^{-1}\eta\right>
%\end{eqnarray}
%\end{lem}
\begin{proof}
Indeed, using Lemma~\ref{Kt+s}, we have
\begin{eqnarray}\label{product-K2}
  K_T^{-1}=K_{t\wedge T}^{-1}(\widetilde K^{(t)}_{\widetilde T})^{-1}
\end{eqnarray}
and
\begin{equation}
\label{eq-tilhtilbeta-1}
\begin{aligned}
(\widetilde H_{\widetilde \beta}^{(t)})^{-1} &= K_{t\wedge T}K_T^{-1}\widetilde T^{(t)}
\\
&= K_{t\wedge T}K_T^{-1}T(\beta^{(t)}-\beta)(t\wedge T)
\\
&= (t\wedge T)H_{\beta^{(t)}}H_\beta^{-1}(\beta^{(t)}-\beta)(t\wedge T)
\end{aligned}
\end{equation}
Taking transpose of the previous identity and using that all \(H\) and
diagonal operators are symmetric its gives formula \eqref{tildeH}.

Next, using \eqref{tildeH},
\begin{eqnarray*}
  \left<\widetilde \eta^{(t)}, (\widetilde H_{\widetilde \beta}^{(t)})^{-1}\widetilde \eta^{(t)}\right>
  &=&
      \left<\widetilde \eta^{(t)}, (t\wedge T)(\beta^{(t)}-\beta)H_{\beta}^{-1} \eta\right>
  \\
  &=&
      \left<\widetilde \eta^{(t)}, (t\wedge T)(H_{\beta^{(t)}}-H_\beta)H_{\beta}^{-1} \eta\right>
  \\
  &=&
      \left<H_{\beta^{(t)}} (t\wedge T)\widetilde \eta^{(t)}, H_{\beta}^{-1} \eta\right> - \left<(t\wedge T) \widetilde \eta^{(t)},  \eta\right>
  \\
  &=&\left<\eta , (H_\beta)^{-1} \eta\right>- \left<\eta, H_{\beta^{(t)}}^{-1}\eta\right>
\end{eqnarray*}
\end{proof}
\end{comment}

% We have
% \begin{align*}
%\label{partial_H_t}
%{\partial \over \partial t} H_{\beta(t)}^{-1}&=H_{\beta(t)}^{-1}\mathds{1}_{t<T}\left({1\over t\wedge T}\right)^2 H_{\beta(t)}^{-1}
%\end{align*}
By \eqref{eta-H} of Lemma~\ref{Kt+s} applied to $t^0=t\wedge T$ and
$t^1=\widetilde T^{(t)}$, we get
\begin{align*}
% \label{crochet-tilde-eta}
  \left<\widetilde \eta^{(t)}, (\widetilde H_{\widetilde \beta}^{(t)})^{-1}\widetilde \eta^{(t)}\right>=\left<\eta , (H_\beta)^{-1} \eta\right>- \left<\eta, (H_{\beta^{(t)}})^{-1}\eta\right>
\end{align*}
Using \eqref{partial_H-1} and \eqref{tilde_eta}, it implies,
\begin{eqnarray}
  \label{form_2}
  % \nonumber
  d\left<\widetilde \eta^{(t)}, (\widetilde H_{\widetilde \beta}^{(t)})^{-1}\widetilde \eta^{(t)}\right>=
  % -\left<K_{t\wedge T}^{-1}(t\wedge T)\mathds{1}_{t<T}K_{t\wedge
  %   T}^{-1}(t\wedge T)\eta, \eta\right>dt
  % \\
  % \label{form_2}
  % &=&
        -\left<\widetilde \eta^{(t)}, \mathds{1}_{t<T}\widetilde \eta^{(t)}\right>dt.
\end{eqnarray}
We have also
\begin{eqnarray*} {\partial\over \partial t} \widetilde \eta^{(t)}= \widetilde W^{(t)}\mathds{1}_{t<T}\eta+\widetilde W^{(t)}\mathds{1}_{t<T}\widetilde W^{(t)}(t\wedge T)\eta= \widetilde W^{(t)}\mathds{1}_{t<T} \widetilde \eta^{(t)}.
\end{eqnarray*}
Hence,
\begin{eqnarray}\label{form_3}
  d\left<\widetilde \eta^{(t)}, X(t)\right>=\left<\widetilde \eta^{(t)}, dX(t)\right>+\left<\widetilde \eta^{(t)}, \mathds{1}_{t<T}\widetilde W^{(t)}X_t\right>dt.
\end{eqnarray}
Finally, using \eqref{eta-H} of Lemma~\ref{lemme1_i} applied to
$t^0=t\wedge T$ and $t^1=\widetilde T^{(t)}$, we get
\begin{eqnarray}\label{product-K2}
  K_T^{-1}=K_{t\wedge T}^{-1}(\widetilde K^{(t)}_{\widetilde T})^{-1}
\end{eqnarray}
which implies by \eqref{partial_K_t},
\begin{equation}\label{form_4} {\partial\over \partial t}\ln\vert
\widetilde K^{(t)}_{\widetilde T}\vert= -{\partial\over \partial t} \ln\vert
K_{t\wedge T}\vert=-\trace(\mathds{1}_{t<T} WK^{-1}_{t\wedge
  T})=-\trace(\mathds{1}_{t<T} \widetilde W^{(t)}).
\end{equation}
Combining~\eqref{form_1},~\eqref{form_2},~\eqref{form_3},
and~\eqref{form_4}, we get using that
\(W\psi(t)+\eta= \widetilde W^{(t)}X(t)+\widetilde \eta^{(t)}\),
\begin{eqnarray*}
  d\ln M_t
  &=& - \left<dX(t), \widetilde W^{(t)}X(t)+\widetilde \eta^{(t)}\right>-\demi\left<\widetilde W^{(t)} X(t), \mathds{1}_{t<T}  \widetilde W^{(t)}X(t)\right>dt
  \\
  &&-\demi\left<\widetilde \eta^{(t)}, \mathds{1}_{t<T}\widetilde \eta^{(t)}\right>dt
     -\left<\widetilde \eta^{(t)}, \mathds{1}_{t<T}\widetilde W^{(t)}X_t\right>dt
  \\
  &=&
      -\left<W\psi(t)+\eta, dX_t\right>-\demi\left<W\psi(t)+\eta, \mathds{1}_{t<T}(W\psi(t)+\eta)\right>dt
\end{eqnarray*}
Consider now a positive measurable test function
\(\phi((X_s)_{s\le t})\). Denote by \(\overline\E^{W,\theta,\eta}_V\),
(resp. \(\E_{V,\theta}\)), the expectation with respect to
\(\overline\P^{W,\theta,\eta}_V\), (resp. \(\P_{V,\theta}\)).  We
have, by \eqref{chgt-density},
\begin{align*}
  &\overline\E^{W,\theta,\eta}_{V}\left(\phi((X_s)_{s\le t})\right))
  \\
  &=
    \E_{V,\theta}\left(\phi((X_s)_{s\le t})\mathds{1}_{H_{\frac{1}{2T}}>0}\cdot
    e^{\demi\left<\theta, W \theta\right>-\demi\left<\eta, ({K_T})^{-1} T\eta\right>+\left<\eta,\theta\right>}{1\over \sqrt{\vert K_{T}\vert}}\right)
  \\
  &=
    \E_{V,\theta}\left({M_t\over M_0}\phi((X_s)_{s\le t})\mathds{1}_{H_{\frac{1}{2T}}>0}\cdot
    e^{\demi\left<X(t), \widetilde W^{(t)} X(t)\right>-\demi\left<\widetilde \eta^{(t)}, (\widetilde H_{\widetilde\beta}^{(t)})^{-1} \widetilde \eta^{(t)}\right>+\left<\widetilde \eta^{(t)},X(t)\right>}{1\over \sqrt{\vert \widetilde K^{(t)}_{\widetilde T}\vert}}\right)
\end{align*}
% where we used the Markov property of \(\P_{V(t)}\).
Let us denote by $\left<\cdot,\cdot\right>_{V(t)}$ the usual
scalar product on $\R^{V(t)}$ (we keep denoting by
$\left<\cdot,\cdot\right>$ the usual scalar product on $\R^V$).  As
\(X(t)\) vanishes on \(V\setminus V(t)\), we have
\[\left<X(t), \widetilde W^{(t)} X(t)\right>=\left<X(t), \widetilde W^{(t)}
  X(t)\right>_{V(t)},\ \
\left<\widetilde \eta^{(t)},X(t)\right>=\left<\widetilde \eta^{(t)},X(t)\right>_{V(t)},\]
By \eqref{eq-tilhtilbeta-1}, since
$(\widetilde H_{\widetilde\beta}^{(t)})^{-1}=(\widetilde K_{\widetilde T}^{(t)})^{-1}\widetilde T^{(t)}$ and
since $\widetilde T^{(t)}$ vanishes on the subset $V\setminus V(t)$
and $\widetilde H_{\widetilde\beta}^{(t)}$ is symmetric, we get
\[\left< \widetilde{\eta}^{(t)}, (\widetilde{H}_{\widetilde{\beta}}^{(t)})^{-1}
  \widetilde{\eta}^{(t)} \right>=\left< \widetilde{\eta}^{(t)},
  (\widetilde{H}_{\widetilde{\beta}}^{(t)})^{-1} \widetilde{\eta}^{(t)}
\right>_{V(t)} ,\]
% where \(\left< \cdot,\cdot \right>_{V(t)}\) is the
% restricted inner product.
Moreover,
\[|\widetilde K^{(t)}_{\widetilde T}|=|\operatorname{Id}-\widetilde T^{(t)}
\widetilde W^{(t)}|=|(\operatorname{Id}-\widetilde T^{(t)}\widetilde W^{(t)})_{V(t),V(t)}|\]
and
\[\mathds{1}_{H_{\frac{1}{2T}}>0}=\mathds{1}_{H_{\beta^{(t)}}>0}
\mathds{1}_{\widetilde{H}_{\widetilde{\beta}}^{(t)}>0}\] thus
\[\mathds{1}_{H_{\frac{1}{2T}}>0} e^{\demi\left<X(t), \widetilde W^{(t)}
    X(t)\right>-\demi\left<\widetilde \eta^{(t)}, (\widetilde
    H_{\widetilde\beta}^{(t)})^{-1}
    \widetilde \eta^{(t)}\right>+\left<\widetilde \eta^{(t)},X(t)\right>}{1\over
  \sqrt{\vert \widetilde K^{(t)}_{\widetilde T}\vert}} =
\mathds{1}_{H_{\beta^{(t)}}>0} \frac{d
  \overline\P_{V(t)}^{\widetilde W^{(t)},X(t),\widetilde{\eta}^{(t)}}}{d
  \P_{V(t),X(t)}}.\] Therefore,
\begin{align*}
  \overline\E^{W,\theta,\eta}_{V}\left(\phi((X_s)_{s\le t})\right))
  &=
    \E_{V,\theta}\left(\mathds{1}_{H_{\beta^{(t)}}>0}{M_t\over M_0}\phi((X_s)_{s\le t})
    \overline E_{V(t)}^{\widetilde W^{(t)},{X(t)},\widetilde \eta^{(t)}}\left(1\right)\right)
  % {1\over \sqrt{2\pi}^n}e^{\demi\left<X(t), \widetilde W^{(t\wedge T)}
  %   X(t)>-\demi\left<\widetilde \eta^{(t\wedge T)}, (\widetilde H_{\widetilde\beta}^{(t\wedge T)})^{-1}
  %     \widetilde \eta^{(t\wedge T)}>+\left<\widetilde \eta^{(t\wedge T)},X(t)>}{1\over \sqrt{\vert
  %       \widetilde K^{(t\wedge T)}\vert}}\right)\right)
  \\
  &=
    \E_{V,\theta}\left(\mathds{1}_{H_{\beta^{(t)}}>0}\phi((X_s)_{s\le t})e^{\int_0^t \left<W\psi(s)+\eta, dX_s\right>-\demi\int_0^t\left<W\psi(s)+\eta, \mathds{1}_{s<T}(W\psi(s)+\eta)\right>ds}
    \right)
\end{align*}
where we used Lemma \ref{calcul-Ito} in the second equality.  It
implies that
$$
\overline\P^{W,\theta,\eta}_{V}=\mathds{1}_{H_{\beta^{(t)}}>0}\exp\left({\int_0^t
  \left<W\psi(s)+\eta, dX_s\right>-\demi\int_0^t\left<W\psi(s)+\eta,
    \mathds{1}_{s<T}(W\psi(s)+\eta)\right>ds}\right) \P_{V,\theta}.
$$
Finally, by Girsanov's theorem, we know that under the law
\begin{align}\label{Girsanov-law}
  \exp\left({\int_0^t \left<W\psi(s)+\eta, dX_s\right>-\demi\int_0^t\left<W\psi(s)+\eta, \mathds{1}_{s<T}(W\psi(s)+\eta)\right>ds}\right) \P_{V,\theta}
\end{align}
the process
\[\left(\widetilde B(t)\right)_{t\ge 0}:= \left(X_{t}+\int_{0}^{t}
\mathds{1}_{s<T}(W\psi(s)+\eta) ds\right)_{t\ge 0}\] is a Brownian motion
stopped at time \(T\), the first hitting time of $0$ by $(X(t))$.
% (as the martingale lemma only holds up to time \(T\)).
(Indeed, recall that $\P_{V,\theta}$ is the law of independent
Brownian motions starting at $\theta$ and stopped at their first
hitting time of $0$).  Hence,
$$
d X(t)=\mathds{1}_{t<T} d \widetilde B(t)+ \mathds{1}_{t<T}(W\psi(t)+\eta)
dt,
$$
and under the law \eqref{Girsanov-law}, $X$ is solution of the S.D.E
\ref{SDE0X} with driving Brownian motion $\widetilde B$.  By
Lemma~\ref{def-SDE0}, we know that a.s. under the law
\eqref{Girsanov-law}, we have $H_{\beta^{(t)}}>0$, thus
$\overline\P^{W,\theta,\eta}_{V}$ and \eqref{Girsanov-law} are
equal. Hence, under $\overline\P^{W,\theta,\eta}_{V}$, $(X(t))$ has
the law of the solutions of the S.D.E \ref{SDE0X}.

\end{proof}

\section{Proof of the Abelian properties : Theorem~\ref{thm_abelian}}
\label{sec_abelian}
\begin{proof}[Proof of
Theorem~\ref{thm_abelian}~(\ref{thm_abelian-i}),~(\ref{thm_abelian-ii})]
Consider first the restriction property~(\ref{thm_abelian-i}).  By
Theorem~\ref{thm_main1}, conditionally on \(T\),
\((X_{i}(t))_{i\in V}\) are independent Bessel bridges from $\theta_i$
to 0 in time $T_i$.  By Theorem~\ref{thm_main1} and
Lemma~\ref{lem_marg_cond}, \(1\over 2 T_{U}\) is
\(\nu_{U}^{W_{U,U},\theta_{U},\widecheck{\eta}}\) distributed.
% hence, it implies that $(X_U(t))_{t\ge 0}$ has the law of a mixture
% of 3-dimensional Bessel Bridges from $\theta_U$ to 0 in time $T_U$
% where ${1\over 2 T_U}$ has law
% \(\nu_{U}^{W_{U,U},\theta_{U},\widecheck{\eta}}\).
By Theorem~\ref{thm_main1} applied to the set $U$ and parameters
$W_{U,U}$, $\theta_{U}$, $\widecheck{\eta}$, it implies that $X_U$ has the
law of the solutions of
\hyperref[SDE0X]{$E^{W,\theta,\widecheck\eta}_U(X)$}.

For (\ref{thm_abelian-ii}), the same argument applies, using that
\(\beta_{U^{c}}\), conditionally on \(\beta_{U}\), is
\(\nu_{U^{c}}^{\widecheck W,\theta_{U^{c}}, \widecheck\eta}\) distributed.
\end{proof}

\begin{comment}
Unlike the previous proofs, which are direct consequence of
independent quenched processes (that is, Bessel bridges when
conditioned on \(\beta\)) and explicit (conditional) marginal
distributions, the proof of
Theorem~\ref{thm_abelian}~\eqref{abelian_iii} is less trivial, it is a
consequence of the specific density expressions of the exponential
family of random potential \(\beta\).
\end{comment}

\begin{proof}[Proof of Theorem~\ref{thm_abelian}~(\ref{thm_abelian-iii})]
Recall that we denote by $X(t)$ (resp. $(X_i(t))_{i\in V}$) the
canonical process on Wiener space $\bold W=C(\R_+,\R)$
(resp. $\bold{W}_V=C(\R_+, \R^V)$).  Recall that
\(\mathbb{B}_{\theta,0}^{3,T}\) and \(\mathbb{E}_{\theta,0}^{3,T}\)
denotes the law (resp. the expectation) on $\bold W$ of a Bessel
bridge from $\theta$ to 0 on time interval $[0,T]$ (and extended by 0
for $t\ge T$). Recall also that $\E_{V}^{W,\theta,\eta}(\cdot)$
denotes the expectation with respect to the law on $\bold{W}_V$ of the
solution of the S.D.E. \ref{SDE0X}.

Following \cite{Revuz-Yor}~p.463, under
\(\mathbb{B}_{\theta,0}^{3,T}\), the law of \(X(t)\) for some
\(0< t< T\) is given by $p_{\theta,0}^{3,t,T}(y)dy$ on $\R_+$, with
\begin{align}
\label{Bessel_kernel}
% \P_{\theta,0}^{3,T}(X(t)\in dy)&
                                   p_{\theta,0}^{3,t,T}(y)=\frac{1}{\sqrt{2\pi t}}\frac{y}{\theta}\left( \frac{T}{T-t} \right)^{3/2}e^{-\frac{y^{2}}{2(T-t)}+\frac{\theta^{2}}{2T}}\left( e^{-\frac{(y-\theta)^{2}}{2t}}-e^{-\frac{(y+\theta)^{2}}{2t}} \right), \;\;\; \forall y\ge 0.
                                   % &:=p_{\theta,0}^{3,T}(y)dy.
\end{align}
Moreover, the Markov property of the Bessel bridge implies that under
\(\mathbb{B}_{\theta,0}^{3,T}\) and conditionally on $X(t)=x$,
$0<t<T$, the law of $((X(u))_{0\le u\le t}, (X(t+u))_{0\le u\le T-t})$
is given by
\begin{align}
  \label{Markov}
  \mathbb{B}_{\theta,x}^{3,t}\otimes  \mathbb{B}_{x,0}^{3,T-t}.
\end{align}

Let us denote by $\overline \nu_V^{W,\theta,\eta}(dT)$ the law of
$T={1\over 2\beta}$ when $\beta$ follows the law
$\nu_V^{W,\theta,\eta}(d\beta)$, so that
\[\overline\nu_{V}^{W,\theta,\eta}(dT)=\mathds{1}_{H_{\frac{1}{2T}}>0}\left(
  \frac{2}{\pi} \right)^{|V|/2}e^{-\frac{1}{2}\left<
    \theta,\frac{1}{T}\theta \right>+\frac{1}{2}\left< \theta,W\theta
  \right>-\frac{1}{2}\left< \eta,(H_{\frac{1}{2T}})^{-1}\eta
  \right>+\left< \eta,\theta \right>}\frac{\prod_{i\in
    V}\theta_{i}}{\sqrt{|H_{\frac{1}{2T}}|}}\prod_{i\in V}\frac{1}{2
  T_{i}^{2}}dT_i\]

Let $(t_i^0)_{i\in V} \in \R_+^V$ be as in the statement of the
theorem. Set
\begin{eqnarray}\label{Vt}
V(t^0)=\{i\in V, \; T_i>t_i^0\}.
\end{eqnarray}
Fix $U\subset V$, and 
denote by $\aaa(t^0,T)$ the
event
\begin{eqnarray}\label{At}
\aaa(t^0,T)=\{V(t^0)=U\}=\{T> t_{i}^{0},i\in U\}\cap
\{ T\le t_{i}^{0},i\in U^c\}
\end{eqnarray}
Let \(h,g\) be bounded measurable test functions. 
%that is, continuous, bounded and only depends on finitely many marginals,
% (we have, by Markov property of Bessel bridge and conditioned on
% \(\{X_{i}(t^{0}_{i})=x_{i},\ \forall i\in U\}\) where
% \(U=\{i: T_{i} > t_{i}^{0}\}\),
By Theorem~\ref{thm_main1}, we have
\begin{align*}
  &  \E_{V}^{W,\theta,\eta}\left[\mathds{1}_{V(t^0)=U} h((X_{i}[0,t_i^0])_{i\in V}) g((X_{i}([t_{i}^{0},T_{i}]))_{i\in U}) \right]\\
  % &=\int \mathds{1}_{T\ge t_{i}^{0},i\in U} \bigotimes_{i\in U} \E_{\theta_{i},0}^{3,T_{i}}\left[ h((X_{i}(t_{0}^{i}))_{i\in U}) g((X_{i}([t_{i}^{0},T_{i}]))_{i\in U}) \right] d\nu_{V}^{W,\theta,\eta}(T)\\
  &=\int \mathds{1}_{\aaa(t^0,T)}  \bigotimes_{i\in {V}}\E_{\theta_{i},0}^{3,T_{i}}\left[h((X_{i}[0,t_i^0])_{i\in V}) g((X_{i}([t_{i}^{0},T_{i}]))_{i\in U})  \right]
  % \bigotimes_{i\in U}
  % p_{\theta_{i},0}^{3,T_{i}}(x_{i})dx_{i}
    d\overline\nu_{V}^{W,\theta,\eta}(T)
\end{align*}
By the Markov property \eqref{Markov}, we have on the event
$\aaa(t^0,T)$, that
\begin{eqnarray*}
  && \bigotimes_{i\in {V}}\E_{\theta_{i},0}^{3,T_{i}}\left[h((X_{i}[0,t_i^0])_{i\in V}) g((X_{i}([t_{i}^{0},T_{i}]))_{i\in U})  \right]
  \\
  &=& \int_{\R_+^{U}} K(x_{U}, t^0_{U}, T_{U^c})
  % (t_i^0\wedge T_i)_{i\in V}, (x_i)_{i\in U})
      \bigotimes_{i\in {U}}\E_{x_i,0}^{3,T_{i}-t^0_i}\left[g((X_{i}([0,T_{i}-t_{i}^{0}]))_{i\in U})\right]
      \left(\prod_{i\in U} p_{\theta_i,0}^{3,t_i^0,T_i}(x_i) dx_i\right)
\end{eqnarray*}
where
\begin{align}
\label{eq-H}
 K(x_{U}, t^0_{U}, T_{U^c})=
 % (t_i^0\wedge T_i)_{i\in V}, (x_i)_{i\in U})
 \left(\bigotimes_{i\in  {U}}\E_{\theta_{i},x_i}^{3,t^0_{i}}
   \bigotimes_{i\in U^c}\E_{\theta_{i},0}^{3,T_{i}}\right)\left[
   h((X_{i}[0,t_i^0])_{i\in V})\right]
\end{align}
is a function that only depends on
\((x_i, t^0_i)_{i\in U}, (T_i)_{i\in U^c}\).
% $t^0\wedge T$ and $(x_i)_{i\in U}$.
We thus get,
\begin{align*}
  &  \E_{V}^{W,\theta,\eta}\left[\mathds{1}_{V(t^0)=U} h((X_{i}[0,t_i^0])_{i\in V}) g((X_{i}([t_{i}^{0},T_{i}]))_{i\in U}) \right]\\
  % &=\int \mathds{1}_{T\ge t_{i}^{0},i\in U} \bigotimes_{i\in U} \E_{\theta_{i},0}^{3,T_{i}}\left[ h((X_{i}(t_{0}^{i}))_{i\in U}) g((X_{i}([t_{i}^{0},T_{i}]))_{i\in U}) \right] d\nu_{V}^{W,\theta,\eta}(T)\\
  &=
 \int \mathds{1}_{\aaa(t^0,T)} K(x_{U}, t^0_{U}, T_{U^c})
  % (t_i^0\wedge T_i)_{i\in V}, (x_i)_{i\in U})
      \bigotimes_{i\in {U}}\E_{x_i,0}^{3,T_{i}-t^0_i}\left[g((X_{i}([0,T_{i}-t_{i}^{0}]))_{i\in U})\right]
      \left(\prod_{i\in U} p_{\theta_i,0}^{3,t_i^0,T_i}(x_i) dx_i\right)   d\overline\nu_{V}^{W,\theta,\eta}(T)
\end{align*}
In the sequel, on the event $\aaa(t^0,T)$, we set
$$
(\widetilde T_i)_{i\in U}=(T_i-t_i^0)_{i\in U}.
$$
The strategy is now to show that we can combine the terms $\prod_{i\in U} p_{\theta_i,0}^{3,t_i^0,T_i}(x_i)$ and the measure $d\overline\nu_{V}^{W,\theta,\eta}(T)$
in such a way that on the event $\aaa(t^0,T)$, changing from variables $(T_i)_{i\in U}$ to variables $(\widetilde T_i)_{i\in U}$, we end up with a function of 
$(x_{U}, t^0_{U}, T_{U^c})$ and the measure $\nu_{U}^{\widetilde{W}^{(t^0)},x,\widetilde{\eta}^{(t^0)}}(d \widetilde{T})$, see forthcoming formula~\eqref{T->tilde_T}.

Let us denote by $\left< \cdot,\cdot \right>_{U}$ the usual scalar product
on $\R^{U}$ (recall that we keep denoting by $\left< \cdot,\cdot \right>$ the usual
scalar product on $\R^V$).  Note that $\widetilde\eta^{(t^0)}$ and $\widetilde W^{(t^0)}$
defined in Theorem~\ref{thm_abelian}~(\ref{thm_abelian-iii}) correspond to
$\widetilde \eta$ and $\widetilde W$ of Lemma~\ref{Kt+s} for $ t^0\wedge T$
and $\widetilde T$.  Hence, by \eqref{eta-H} of Lemma~\ref{Kt+s}, we get, with $\widetilde H_{\frac{1}{2 \widetilde{T}}}^{(t^0)}=\frac{1}{2 \widetilde{T}}-\widetilde W^{(t^0)}$,
that
$$
\left< \widetilde{\eta}^{(t^0)},(\widetilde{H}_{\frac{1}{2 \widetilde{T}}}^{(t^0)})^{-1}
  \widetilde{\eta}^{(t^0)} \right>_{U}-\left<
  \eta,(H_{\frac{1}{2T}})^{-1}\eta \right>=-\left<
  \eta,(H_{\frac{1}{2t^{0}\wedge T}})^{-1}\eta \right>
$$
and by \eqref{eq-det} of Lemma~\ref{Kt+s}
$$
\frac{\left| \left(\widetilde{H}_{\frac{1}{2 \widetilde{T}}}^{(t^0)}\right)_{U,U}\right|}{| H_{\frac{1}{2T}}|}=\left| K_{t^0\wedge
    T}\right| \prod_{i\in U}\left( \frac{T_{i}}{\widetilde{T}_{i}}
\right).
$$
% \[
% \begin{cases}
% \left< \widetilde{\eta},(\widetilde{H}_{\frac{1}{2 \widetilde{T}}})^{-1} \widetilde{\eta} \right>-\left< \eta,(H_{\frac{1}{2T}})^{-1}\eta \right>=-\left< \eta,(H_{\frac{1}{2t^{0}}})^{-1}\eta \right>\\
% \left< \widetilde{\eta},x \right>-\left< \eta,\theta \right>=\left< \eta+\widetilde{W}(t^{0}\wedge T)\eta,x \right>-\left< \eta,\theta \right>\\
% \left< x,\frac{1}{\widetilde{T}}x \right>-\left< \theta,\frac{1}{T}\theta \right>=\sum_{i\in U}\frac{x^{2}}{\widetilde{T}}-\sum_{i\in V}\frac{\theta_{i}^{2}}{T_{i}}\\
% \frac{| \widetilde{H}_{\frac{1}{2 \widetilde{T}}}|}{|
% H_{\frac{1}{2T}}|}=\prod_{i\in U}\left(
%   \frac{T_{i}}{\widetilde{T}_{i}} \right)^{\frac{1}{2}}
% \end{cases}
% \]
Note that we have
\begin{align*}
% \P_{\theta,0}^{3,T}(X(t)\in dy)&
                                   \prod_{i\in U} p_{\theta_{i},0}^{3,t_i^0,T_{i}}(x_{i})=e^{-\demi\left<x,{1\over \widetilde T}x\right>_{U}+\demi \left<\theta,\frac{1}{T}\theta\right>_{U}}  \prod_{i\in U}\left( e^{-\frac{(x_i-\theta_i)^{2}}{2t_i^0}}-e^{-\frac{(x_i+\theta_i)^{2}}{2t^0_i}} \right)\frac{1}{\sqrt{2\pi t_i^0}}\frac{x_i}{\theta_i}
                                   \left( \frac{T_{i}}{\widetilde{T}_{i}} \right)^{3/ 2}\\
                                   % &:=p_{\theta,0}^{3,T}(y)dy.
\end{align*}
Changing from variables $(T_i)_{i\in U}$ to
$(\widetilde T_i)_{i\in U}$, we get
\begin{align}
\label{T->tilde_T}
\mathds{1}_{\aaa(t^0,T)}\left(\prod_{i\in U}
  p_{\theta_{i},0}^{3,T_{i}}(x_{i})\right)
\overline\nu_{V}^{W,\theta,\eta}(dT) =\mathds{1}_{T_i<t_i^0, \; i\in
  U^c} \Xi(x_{U}, t^0_{U}, T_{U^c})
% x_{i},t_{i}^{0}\wedge T)
\nu_{U}^{\widetilde{W}^{(t^0)},x,\widetilde{\eta}^{(t^0)}}(d \widetilde{T}) \prod_{i\in
  U^{c}} dT_{i} 
  \end{align} 
for some explicit function
\(\Xi(x_{U}, t^0_{U}, T_{U^c})\) that only 
%{\color{red} we should insist that this is the goal of all previous computation somewhere}
depends on \((x_i, t^0_i)_{i\in U}, (T_i)_{i\in U^c}\).

Continuing our computation, we have 
\begin{align}
\nonumber    & \E_{V}^{W,\theta,\eta}\left[\mathds{1}_{V(t^0)=U} h((X_{i}[0,t_i^0])_{i\in V}) g((X_{i}([t_{i}^{0},T_{i}]))_{i\in U}) \right]\\
% \mathbb{E}_{V}^{W,\theta,\eta}( h((X_{i}(t_{0}^{i}))_{i\in
% U}) g((X_{i}([t_{i}^{0},T_{i}]))_{i\in U}))\\
   \label{equation-affreuse}
                  &= \int \mathds{1}_{T_i<t_i^0, \; i\in U^c} K(x_{U}, t^0_{U}, T_{U^c}) \Xi(x_{U}, t^0_{U}, T_{U^c})
                    \E_{U}^{\widetilde{W}^{(t^0)},x,\widetilde{\eta}^{(t^0)}}\left[ g((X_i([0,T_i]))_{i\in U})  \right] \prod_{i\in U} dx_i \prod_{i\in U^c}dT_{i}
                    % &=\mathbb{E}_{V}^{W,\theta,\eta} (
                    % h((X_{i}(t_{0}^{i}))_{i\in U}) )
                    % \mathbb{E}_{U}^{\widetilde{W},X(t^{0})_{|
                    % U},\widetilde{\eta}}(g((X_{i}([t_{i}^{0},T_{i}]))_{i\in
                    % U})) ).
\end{align}

Let us apply the last equality to the case where $h$ and $g$ are replaced by
\begin{align*}
&\widetilde h((X_{i}[0,t_i^0])_{i\in V}):= h((X_{i}[0,t_i^0])_{i\in V})
\mathbb{E}_{U}^{\widetilde{W}^{(t^0)},X_{U}(t^{0}),\widetilde{\eta}^{(t^0)}}
\left(g((X_{i}([{0},T_{i}]))_{i\in
  U})
  \right),
  \\
  &\widetilde g:=1
\end{align*}
%Remark that $\widetilde h((X_{i}[0,t_i^0])_{i\in V})$ is $\fff_{t^0}^X$-measurable function. 
The identity \eqref{equation-affreuse} gives in this case
\begin{align}
\nonumber
& \E_{V}^{W,\theta,\eta}\left[\mathds{1}_{V(t^0)=U} h((X_{i}[0,t_i^0])_{i\in V}) \mathbb{E}_{U}^{\widetilde{W}^{(t^0)},X_{U}(t^{0}),\widetilde{\eta}^{(t^0)}}
\left( g((X_{i}([{0},T_{i}]))_{i\in U})\right) 
   \right]
 \\
\label{affreuse-2}  & =\int \mathds{1}_{T_i<t_i^0, \; i\in U^c} \widetilde K(x_{U}, t^0_{U}, T_{U^c}) \Xi(x_{U}, t^0_{U}, T_{U^c})
     \prod_{i\in U} dx_i \prod_{i\in U^c}dT_{i}
\end{align}
where, using \eqref{eq-H} applied to $\widetilde h$ instead,
$$
\widetilde K(x_{U}, t^0_{U}, T_{U^c})= K(x_{U}, t^0_{U}, T_{U^c})\E_{U}^{\widetilde{W}^{(t^0)},x,\widetilde{\eta}^{(t^0)}}\left[ g((X([0,T])_{i\in U}))  \right].
$$
Remark that the right-hand sides of \eqref{equation-affreuse} et \eqref{affreuse-2} are thus the same.
Hence, we conclude that
\begin{align*}    & \E_{V}^{W,\theta,\eta}\left[\mathds{1}_{V(t^0)=U} h((X_{i}[0,t_i^0])_{i\in V}) g((X_{i}([t_{i}^{0},T_{i}]))_{i\in U}) \right]\\
                  &=
                    \E_{V}^{W,\theta,\eta}\left[\mathds{1}_{V(t^0)=U} h((X_{i}[0,t_i^0])_{i\in V})
                    \mathbb{E}_{U}^{\widetilde{W}^{(t^0)},X_{U}(t^{0}),
                    \widetilde{\eta}^{(t^0)}}\left(g((X_{i}([0,T_{i}]))_{i\in
                    U})\right) \right].
\end{align*}
Summing on all possible choices of $U$, we exactly get that the
law of $(X_{i}([t_{i}^{0},T_{i}]))$, conditionally on $\fff^X({t^0})$,
is the law of the solutions of the
S.D.E. \hyperref[SDE0X]{\(E_{V}^{\widetilde W^{(t^0)},X(t^0),\widetilde \eta^{(t^0)}}(X)\)}.

% $g((X_{i}([t_{i}^{0},T_{i}]))_{i\in U})$ depends only on the
% value $(X(t^0_i))$ at time $t^0_{U}$, then we get that for
% any function $f((X(t^0_i)_{i\in U})$ we have
% \begin{align*}    & \E_{V}^{W,\theta,\eta}\left[\mathds{1}_{V(t^0)=U} h((X_{i}[0,t_i^0])_{i\in V}) f((X(t^0_i)_{i\in U}) \right]\\
% \mathbb{E}_{V}^{W,\theta,\eta}( h((X_{i}(t_{0}^{i}))_{i\in
    %     U}) g((X_{i}([t_{i}^{0},T_{i}]))_{i\in
    %     U}))\\
    %     &= \int \mathds{1}_{T_i<t_i^0, \; i\in U^c}
    %     H(x_{U}, t^0_{U}, T_{U^c})
    %     \Xi(x_{U}, t^0_{U}, T_{U^c})
    %     f(x) \prod_{i\in U} dx_i \prod_{i\in
    %     U^c}dT_{i}\\
    %     \end{align*}
    %     If now we apply this identity to $f(x)= $
    %     :=g((X_{i}(t_{i}^{0})_{i\in U})$ depends
    %     =((X_{i}(t_{i}^{0})_{i\in U})$$
\end{proof}

\begin{proof}[Proof of Theorem~\ref{thm_abelian}~(\ref{thm_abelian-iiii})]
Fix as before $U\subset V$. With the notations \eqref{Vt} and \eqref{At}, we have
\[
V(T^0)=\{i\in V, \ T_i>T^0_i\}, \;\;\; \aaa(T^0,T)=\{V(T^0)=U\}.
\]
%the random set of vertices where the process \(X\) have not yet reach 0, 
We simply write $\{T_0<\infty\}$ for the event $\{T^0_i<\infty, \forall i\in V\}$.
In order to prove the strong Markov property~\eqref{thm_abelian-iiii}, it is enough to prove that, for any bounded test function \(h,g\), depending continuously on finitely many marginals of $X$, we have
\begin{equation}
\label{eq-markov-peroperty-intermoftestfunc}
\begin{aligned}
&\mathbb{E}_V^{W,\theta,\eta}\left[ \mathds{1}_{T^0<\infty}\mathds{1}_{\aaa(T^0,T)} h((X_i[0,T^0_i])_{i\in V})g((X_i[T^0_i,T_i])_{i\in U}) \right]\\
&=\mathbb{E}_V^{W,\theta,\eta}\left[ \mathds{1}_{T^0<\infty}\mathds{1}_{\aaa(T^0,T)}  h((X_i[0,T^0_i])_{i\in V})\mathbb{E}_{U}^{\widetilde{W}^{(T_0)},X_{U}(T^0),\widetilde{\eta}^{(T^0)}}(g((X_i[0,T_i])_{i\in U})) \right]
\end{aligned}
\end{equation}
%Without lost of generality we can assume \(T^0<\infty\) a.s., let \(0\le t^1<\cdots <t^p\) be fixed, for all \(i\in V\), for any \(n\ge 1\),
%On the event \(T_i^0<\infty, \; \forall i\in V\), w
We define the sequence of stopping times, for all $i\in V$, by
\[[T^0_i]_n= \frac{k}{2^n} \text{ when }\frac{k-1}{2^n}\le T^0_i<\frac{k}{2^n},\ k\in \mathbb{N},\]
and $[T^0_i]_n=\infty$ when $T^0_i=\infty$.
We can check that \([T^0]:=([T^0_i]_n)_{i\in V} \) is a multi-stopping time in the sense of Theorem~\ref{thm_abelian}~(\ref{thm_abelian-iiii}), since for $(k_i)_{i\in V}\in \N^V$,
 \[
 \bigcap_{i\in V} \{\frac{k_i-1}{2^n}\le T^0_i<\frac{k_i}{2^n}\}\in \sigma\left(X_i(s),\ s\le \frac{k_i}{2^n},\; i\in V\right).
 \] 
Moreover, \([T^0_i]_n\) decreases a.s. to \(T^0_i\) and for $n$ large enough $V([T^0]_n)=V(T^0)$ a.s.. This implies that a.s.
\[
%\mathds{1}_{T^0_i<\infty, \forall i\in V} 
\mathds{1}_{T^0<\infty}
\mathds{1}_{\aaa(T^0,T)} g((X_i[T^0_i, T_i])_{i\in U})=\lim_{n\to \infty} 
%\mathds{1}_{[T^0_i]_n<\infty, \forall i\in V} 
\mathds{1}_{[T^0]_n<\infty}
\mathds{1}_{\aaa([T^0]_n,T)}g((X_i[[T^0_i]_n,T_i])_{i\in U}).\]
Therefore, by dominated convergence theorem,
\begin{align*}
  &\mathbb{E}_V^{W,\theta,\eta}\left[ \mathds{1}_{T^0<\infty}\mathds{1}_{\aaa(T^0,T)} h((X_i[0,T^0_i])_{i\in V})g((X_i[T^0_i,T_i])_{i\in U}) \right]\\
  &=\lim_{n\to \infty}\mathbb{E}_V^{W,\theta,\eta}\left[ \mathds{1}_{[T^0]_n<\infty}
\mathds{1}_{\aaa([T^0]_n,T)} h((X_i[0,T^0_i])_{i\in V}) g((X_i[[T^0_i]_n,T_i])_{i\in U}) \right]\\
  &=\lim_{n\to \infty}\sum_{k=(k_i)_{i\in V}\in \mathbb{N}^V}\mathbb{E}_V^{W,\theta,\eta}\left[ \left(\prod_{i\in V}\mathds{1}_{\frac{k_i-1}{2^n}\le T^0_i<\frac{k_i}{2^n}}\right) \mathds{1}_{\aaa({k\over 2^n},T)} h((X_i[0,T^0_i])_{i\in V})g((X_i[\frac{k_i}{2^n},T_i])_{i\in U}) \right]
\end{align*}
where in the last equality we sum on the possible values of each \([T^0_i]_n,i\in V\). Note that
\[\left(\prod_{i\in V}\mathds{1}_{\frac{k_i-1}{2^n}\le T^0_i<\frac{k_i}{2^n}}\right) \mathds{1}_{\aaa({k\over 2^n},T)}  h((X_i[0,T^0_i])_{i\in V})\]
is \(\mathcal{F}^X(\frac{k}{2^n})\) measurable, so we can apply the Markov property~\eqref{thm_abelian-iii}, and we get
\begin{align*}
&  \mathbb{E}_V^{W,\theta,\eta}\left[ \mathds{1}_{\aaa({k\over 2^n} ,T)} \left(\prod_{i\in V}\mathds{1}_{\frac{k_i-1}{2^n}\le T^0_i<\frac{k_i}{2^n}}\right) h((X_i[0,T^0_i])_{i\in V})g((X_i[\frac{k_i}{2^n},T_i])_{i\in V(\frac{k}{2^n})}) \right]\\
  &=\mathbb{E}_V^{W,\theta,\eta}\left[ \mathds{1}_{\aaa({k\over 2^n},T)} \left(\prod_{i\in V}\mathds{1}_{\frac{k_i-1}{2^n}\le T^0_i<\frac{k_i}{2^n}}\right) h((X_i[0,T^0_i])_{i\in V}) \mathbb{E}_{U}^{\widetilde{W}^{(\frac{k}{2^n})},X_{U} (\frac{k}{2^n}),\widetilde{\eta}^{(\frac{k}{2^n})}}(g((X_i[0,T_i])_{i\in U})) \right].
\end{align*}
Summing on possible values of $(k_i)$, we get:
\begin{align}
\nonumber
  &\mathbb{E}_V^{W,\theta,\eta}\left[ \mathds{1}_{T^0<\infty}\mathds{1}_{\aaa(T^0,T)} h((X_i[0,T^0_i])_{i\in V})g((X_i[T^0_i,T_i])_{i\in U}) \right]\\
 \label{ref_g}
  &=\lim_{n\to \infty} \mathbb{E}_V^{W,\theta,\eta}
  \left[ \mathds{1}_{T^0<\infty} \mathds{1}_{\aaa([T^0]_n,T)}  h((X_i[0,T^0_i])_{i\in V}) 
  \mathbb{E}_{U}^{\widetilde{W}^{([T^0]_n)},X_{U} ([T^0]_n),\widetilde{\eta}^{([T^0]_n)}}(g((X_i[0,T_i])_{i\in U})) \right].
\end{align}
We conclude the proof thanks to the Feller property
(see e.g. Section~18.6 of \cite{schilling2014brownian}) proved in the Lemma below.
\end{proof}
\begin{lem}\label{Feller}
The function $(W,\theta,\eta)\to \mathbb{E}_{V}^{W,\theta,\eta}(g((X_i[0,T_i])_{i\in V}))$ is continuous on $(\R_+^*)^E\times (\R_+^*)^V\times \R_+^V$ for any bounded measurable function $g$ depending only on a finite number of marginals.
\end{lem}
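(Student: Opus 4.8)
The plan is to reduce everything to the mixture representation of Theorem~\ref{thm_main1}. By that theorem, the law (on the $N$-dimensional Wiener space) of the solution of \eqref{SDE0X} with parameters $W,\theta,\eta$ is the mixture $\int\big(\bigotimes_{i\in V}\mathbb{B}_{\theta_i,0}^{3,1/(2\beta_i)}\big)\,\nu_V^{W,\theta,\eta}(d\beta)$, so that for $g$ bounded measurable depending on finitely many marginals,
\[
\mathbb{E}_V^{W,\theta,\eta}\big(g((X_i[0,T_i])_{i\in V})\big)=\int G_\theta(\beta)\,f_{W,\theta,\eta}(\beta)\,d\beta,
\]
where $f_{W,\theta,\eta}$ is the density of $\nu_V^{W,\theta,\eta}$ with respect to $d\beta$ given by \eqref{eq:1}, and $G_\theta(\beta):=\big(\bigotimes_{i\in V}\mathbb{E}_{\theta_i,0}^{3,1/(2\beta_i)}\big)\big[g\big]$; note that $|G_\theta|\le\|g\|_\infty$ and that $G_\theta$ depends neither on $W$ nor on $\eta$. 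Fix times $0<s_1<\dots<s_k$ so that $g((X_i[0,T_i])_{i\in V})=\widetilde g\big((X_i(s_j))_{i\in V,\,1\le j\le k}\big)$ for some bounded measurable $\widetilde g$, with the convention $X_i(s_j)=0$ when $s_j\ge T_i$. The first step is to check that $\theta\mapsto G_\theta(\beta)$ is continuous for Lebesgue-a.e.\ $\beta$. Indeed, by the Markov property \eqref{Markov} and the explicit kernel \eqref{Bessel_kernel}, for each $i$ and each $T_i\notin\{s_1,\dots,s_k\}$ the joint law of $(X_i(s_j))_{1\le j\le k}$ under $\mathbb{B}_{\theta_i,0}^{3,T_i}$ is a product of continuous transition densities (together with Dirac masses at $0$ for the indices $j$ with $s_j\ge T_i$), jointly continuous in $(\theta_i,T_i)$ in that region; by Scheff\'e's lemma this law depends continuously, in total variation, on $\theta_i$, hence $G_\theta(\beta)$ is continuous in $\theta$ as soon as $1/(2\beta_i)\notin\{s_1,\dots,s_k\}$ for all $i$, that is, off a Lebesgue-null set of $\beta$.

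The second step is to show that $f_n:=f_{W_n,\theta_n,\eta_n}\to f:=f_{W,\theta,\eta}$ in $L^1(d\beta)$ whenever $(W_n,\theta_n,\eta_n)\to(W,\theta,\eta)$ in $(\R_+^*)^E\times(\R_+^*)^V\times\R_+^V$. The set $\{\beta:\det(2\beta-W)=0\}$, with $W$ the limiting matrix, is the zero set of a polynomial in $\beta$ which is not identically zero (it is positive for $\beta$ large), hence Lebesgue-null; off it, for $\beta$ with $2\beta-W>0$ the eigenvalues of $2\beta-W_n$ converge to those of $2\beta-W$, so $\mathds{1}_{2\beta-W_n>0}=1$ for $n$ large and $f_n(\beta)\to f(\beta)$ by continuity of the expression \eqref{eq:1} in $(W,\theta,\eta,\beta)$, while for $\beta$ such that $2\beta-W$ has a negative eigenvalue one has $f_n(\beta)=0=f(\beta)$ for $n$ large. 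Thus $f_n\to f$ Lebesgue-a.e., and since $f_n$ and $f$ are probability densities (Lemma~\ref{lem_beta}), Scheff\'e's lemma yields $\|f_n-f\|_{L^1(d\beta)}\to0$.

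It then remains to combine the two steps through the splitting
\[
\mathbb{E}_V^{W_n,\theta_n,\eta_n}(g)-\mathbb{E}_V^{W,\theta,\eta}(g)=\int G_{\theta_n}\,(f_n-f)\,d\beta+\int(G_{\theta_n}-G_\theta)\,f\,d\beta.
\]
The first integral is bounded in absolute value by $\|g\|_\infty\,\|f_n-f\|_{L^1}\to0$, and the second tends to $0$ by dominated convergence, its integrand being dominated by the fixed integrable function $2\|g\|_\infty f$ and tending to $0$ for $f$-a.e.\ $\beta$ by the first step. This establishes sequential continuity, hence continuity, of $(W,\theta,\eta)\mapsto\mathbb{E}_V^{W,\theta,\eta}(g((X_i[0,T_i])_{i\in V}))$. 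The one delicate point — and the reason for the two-term split rather than a single dominated-convergence argument — is that both the mixing measure $\nu_V^{W_n,\theta_n,\eta_n}$ and the integrand $G_{\theta_n}$ move with the parameters; Scheff\'e's lemma is precisely what converts the a.e.\ convergence of the (probability) densities into the $L^1$ convergence needed to absorb the simultaneously moving integrand, and one must only take care that the two exceptional sets of $\beta$ (where $G_\theta$ fails to be continuous in $\theta$, and where $f_n\not\to f$) are Lebesgue-null and hence negligible for the absolutely continuous laws involved.
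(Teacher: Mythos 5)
Your proof is correct, and while it starts from the same point as the paper --- the mixture representation of Theorem~\ref{thm_main1}, which turns the expectation into $\int G_\theta(\beta)\,\nu_V^{W,\theta,\eta}(d\beta)$ with a Bessel-bridge factor $G_\theta$ --- the way you handle the two moving pieces is genuinely different. The paper first reduces to $\eta=0$ via Lemma~\ref{lem_marg_cond}, proves continuity of the bridge expectation jointly in $(\theta,T)$ by representing the $3$-dimensional Bessel bridge as the norm of a Brownian bridge (an a.s.\ coupling, hence really a weak-convergence argument tailored to continuous test functions), and then controls the mixing measure by the change of variables $\beta\to x$ of \cite{STZ15}, dominating the transformed density locally uniformly in $(W,\theta)$ so that dominated convergence applies; because that change of variables depends on $W$, the bridge lengths move with the parameters, which is why joint $(\theta,T)$-continuity is needed there. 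You instead keep $\beta$ fixed, get continuity of $G_\theta(\beta)$ in $\theta$ alone for Lebesgue-a.e.\ $\beta$ from the explicit kernel \eqref{Bessel_kernel}, the Markov property \eqref{Markov} and Scheff\'e (total-variation continuity), and replace the domination step by a second application of Scheff\'e to the densities \eqref{eq:1} themselves, which converge a.e.\ since $\{\det(2\beta-W)=0\}$ is Lebesgue-null; the two-term splitting then absorbs the simultaneously moving integrand and density. What your route buys: no change of variables or domination bound, no reduction to $\eta=0$, and it genuinely covers bounded \emph{measurable} $g$ as in the lemma's statement (the paper's coupling argument really gives the continuous-$g$ case, which is all that is used in the proof of Theorem~\ref{thm_abelian}~(\ref{thm_abelian-iiii})); what it gives up is the quantitative, locally uniform control of the mixing densities that the paper's domination provides. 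One caveat you share with the statement itself: your fixing of times $0<s_1<\dots<s_k$ tacitly excludes the time-$0$ marginal, and indeed for merely measurable $g$ involving $X_i(0)=\theta_i$ (e.g.\ $\mathds{1}_{X_1(0)\in\mathbb{Q}}$) continuity in $\theta$ fails, so the restriction to positive times (or to continuous dependence at time $0$) is needed and worth stating explicitly.
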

\begin{proof}[Proof of Lemma~\ref{Feller}]
It is enough to consider the case \(\eta=0\), since the case \(\eta\ne 0\) is a marginal of the case \(\eta=0\) by Lemma~\ref{lem_marg_cond}. Without loss of generality we assume \(W_{i,i}=0,\ \forall i\). The proof follows from the representation Theorem~\ref{thm_main1} and the two ingredients below.

Under the 3-dimensional Bessel bridge law, the expectation $\mathbb{E}_{\theta,0}^{3,T}(g((X_i[0,T_i])_{i\in V}))$ is continuous in $(\theta, T)$. Indeed, the 3-dimensional Bessel bridge is the norm of a 3-dimensional Brownian bridge from $x$ to $0$ if $\| x\|=\theta$, and the 3-dimensional Brownian bridge from $x$ to 0 can be represented as $x+B_t^{(3)}-{t\over T}B^{(3)}_T-{t\over T}x$ where $(B^{(3)}_t)$ is a 3-dimensional standard Brownian motion.
% (it can be seen as a consequence of the Markov property \eqref{Markov} and the regularity of its transition kernel, \eqref{Bessel_kernel} since $g$ depends continuously only on a finite number of marginals), as soon as we have some uniform integrability of the integrand.
% and since the mixing measure can be uniformly dominated locally in the parameters $W,\theta,\eta$ by Lemma~\ref{lem-dom-conv}.

On the other hand, the measure $\nu_V^W(d\beta)$ can be dominated locally on the parameters $W,\theta$ after some change of coordinates, following \cite{STZ15}. (Note that the density \(\nu_V^{W,\theta}\) in the present paper correspond to \(\nu^{W,\theta^2}\) in \cite{STZ15}.) 
For convenience, write  \(V=\{1 ,\ldots,N \}\). By the change of variables $(\beta_i)_{i\in V} \to (x_i)_{i\in V}$ from $\{\beta, \; H_\beta>0\}$ to $(\R_+^*)^V$ described in the proof of Theorem 1 of \cite{STZ15} (see page 3977), we have
\begin{equation}
\label{eq-change-of-va-to-x}
\begin{aligned}
&\mathds{1}_{H_\beta>0} \exp\left( -\frac{1}{2}\left< \theta,H_{\beta}\theta \right> -\frac{1}{2}\sum_{i,j}W_{i,j}\theta_i \theta_j\right)\frac{1}{\sqrt{\det H_{\beta}}}d \beta\\
&=\frac{1}{2^N}\mathds{1}_{x\in \mathbb{R}_+^{N}}\exp\left( -\sum_{l=1}^N \left( \frac{\theta_l^2 x_l}{2}+\frac{1}{2x_l}\left( \sum_{k=l+1}^N \theta_k^2 H_{l,k}^2 \right) \right) \right)\frac{1}{\sqrt{x_1 \cdots x_N}}dx.
\end{aligned}
\end{equation}
following the notation there, in particular the definition of \(\{x_i,H_{i,j}:\ 1\le i,j\le N\}\). By definition, for any \(l\ge 1\), \(H_{l,k}\ge W_{l,k}\).

Now fix \(W^0,\theta^0\), let \(\Omega\) be a neighborhood of \((W,\theta)\), denote
\[\underline{W}_{l,k}=\inf_{\Omega}W_{l,k},\ \ \ \underline{\theta}_l=\inf_{\Omega}\theta_l.\]
For any \(W,\theta\in \Omega\), we have \(H_{l,k}\ge W_{l,k}\ge \underline{W}_{l,k}\) and \(\theta_l\ge \underline{\theta}_l \) for all \(1\le l,k\le N\), so the density in (\ref{eq-change-of-va-to-x}) is locally uniformly bounded (in the variables \(x\)s) by
\[\mathds{1}_{x\ge 0}\exp\left( -\sum_{l=1}^N \left( \frac{\underline{\theta}_l^2 x_l}{2}+\frac{1}{2x_l}\left( \sum_{k=l+1}^N \underline{\theta}_k^2 \underline{W}_{l,k}^2 \right) \right) \right)\frac{1}{\sqrt{x_1 \cdots x_N}},\]
which is an integrable function, as \(x_1 ,\ldots,x_{N-1}\) are distributed as inverse of IG distribution, and \(x_N\) is a Gamma distributed random variable.

%This is a consequence of the following fact: for all conductances $W$, one can find $\epsilon >0$ such that $\epsilon \operatorname{Id} -W$ is positive definite, and it can be done locally uniformly in $W$. It implies that on time intervalle $[0,\epsilon]$ the S.D.E \eqref{SDE0X} has $C^\infty$ bounded drift. Moreover, locally it is elliptic. It implies that the transition kernel of the S.D.E is regularizing, i.e. for any bounded measurable function $\tilde g$
%$$
 %\mathbb{E}_{U}^{W,\theta,\eta}\left(\tilde g(\tilde W(\epsilon), X(\epsilon), \tilde \eta(\epsilon))\right)
% $$
%is continuous in the parameters $W,\theta,\eta$.  Since in \eqref{SDE0X} depends on finitely many marginals, we can write for $\epsilon$ small enough
%$$
%\mathbb{E}_{U}^{W,\theta,\eta}(g((X_i[0,T_i])_{i\in U}))=
%\mathbb{E}_{U}^{W,\theta,\eta}\left(\mathbb{E}_{U}^{W,\theta,\eta}\left(g((X_i[0,T_i])_{i\in U})| X(\epsilon), \tilde W(\epsilon),\tilde \eta(\epsilon)\right)\right),
%$$
%hence it is continuous in the parameters  $W,\theta,\eta$.

%Note that we used the fact that the solution \(X(t)\) to the S.D.E. \hyperref[SDE0X]{\(E_{V}^{\widetilde W^{\frac{k}{2^n}},X_{V(\frac{k}{2^n})}(\frac{k}{2^n}),\widetilde \eta^{\frac{k}{2^n}}}(X)\)} are Feller, since the coefficients of the S.D.E., for \(t<T-\frac{k}{2^n}\), e.g. \(\widetilde{W}^{k/2^n} \widetilde{K}_t^{-1}\) is bounded, that is, the drift term of S.D.E. is linear with bounded coefficients, so the Lipschitz condition (Equation (18.10) of \cite{schilling2014brownian}) is satisfied. For details see e.g. Section 18.6 of \cite{schilling2014brownian}).
\end{proof}

\section{Relation with the martingales associated with the
  VRJP}\label{sec_martingale}
%\subsection{The discrete vectorial martingale associated with the VRJP}\label{link_martingale}
Consider in this section that \(V\) is infinite and that \(W\) is such
that the associated graph \(\mathcal{G}\) has finite degree at each vertex
and is connected.  Following \cite{ST15}, we extend the definition of
the distribution $\nu_V^{W,\theta}$ to the case of this infinite
graph.  We assume to be coherent with \cite{ST15} that \(W\) is zero
on the diagonal.  Note that we slightly generalize the definition of
\cite{ST15} since we consider a general vector
\((\theta_i)_{i\in V}\in (\R_+)^V\), which is equal to \(1\) in
\cite{ST15}. (But as noted at the beginning of section
\ref{sec_results_distrib} it is in fact not more general since we can
always take $\theta$ to $1$ by a change of variables on $\beta$ and
$W$.)
% note by remark ? that the parameters \(\theta_i\) can be taken to 1
% by changing the parameters \(W\)).  In \cite{ST15}, the law \(\nu\)
% was extended to the case of the infinite graph.

Let us recall the construction of the distribution $\nu_V^{W,\theta}$
obtained by Kolmogorov's extension Theorem. The approach is slightly
different from that of \cite{ST15} and make use of
Lemma~\ref{lem_marg_cond}, \eqref{item-2}.  Let \(V_n\) be an
increasing sequence of subsets such that \(\cup_{n\ge 1} V_n= V\).
Consider the vector \(\eta^{(n)}\in (\R_+)^{V_n}\) defined by
\begin{align}\label{etan}
\eta^{(n)}=W_{V_n,V_n^c}(\theta_{V^c_n}).
\end{align}
By Lemma~\ref{lem_marg_cond}, \eqref{item-2}, the sequence of
distribution \(\nu_{V_n}^{W,\eta^{(n)}}\) is compatible, hence by
Kolmogorov theorem it can be extended to a measure
\(\nu^{W,\theta}_{V}\) on \((\R_+)^V\).  We define the Schr\"odinger
operator 
%such that \(\beta_{V_n}\) has law \(\nu_{V_n}^{W,\theta,\eta^{(n)}}\).
\[
H_\beta:=2\beta-W,
\]
on \(\R^V\) associated with the potential
\(\beta\sim\nu^{W,\theta}_V\). Note that $H_\beta\ge 0$ as the limit
of $(H_{\beta})_{V_n,V_n}$ which is positive definite since \(\beta_{V_n}\) has law \(\nu_{V_n}^{W,\theta,\eta^{(n)}}\).

In \cite{SZ15} we considered the sequence of functions
\((\psi^{(n)}_j)_{j\in V}\in (\R_+)^V\) defined by
\begin{align}\label{psi}
  \begin{cases}
  (H_\beta \psi^{(n)})_{V_n}=0
  \\
  \psi^{(n)}_{V_n^c}=\theta_{V_n^c}
  \end{cases}
\end{align}
and the operators \((\widehat G^{(n)}(i,j))_{i,j\in V_n}\) by
\[
\begin{cases}
\widehat G^{(n)}_{V_n,V_n}=((H_\beta)_{V_n,V_n})^{-1},
\\
\widehat G^{(n)}(i,j)=0, \hbox{ if \(i\) or \(j\) in not in \(V_n\)}
\end{cases}
\]
Let \(\fff_n=\sigma(\beta_i, \;i\in V_n)\), the sigma field generated
by \(\beta_{V_n}\).  In \cite{SZ15}, Proposition~9, it was proved that
\(\psi^{(n)}\) is a vectorial \(\fff_n\)-martingale, with quadratic variation
given by \(\widehat G^{(n)}(i,j)\), i.e. that for all \(i,j\) in \(V\) and
all \(n\)
\[
\E\left( \psi^{(n+1)}(i)\psi^{(n+1)}(j)-\widehat G^{(n+1)}(i,j) |
  \fff_n\right)= \psi^{(n)}(i)\psi^{(n)}(j)-\widehat G^{(n)}(i,j).
\]
It was extended in \cite{DMR15} to an exponential martingale property,
namely it was proved that for any compactly supported function
\(\lambda\in (\R_+)^V\),
\begin{align}\label{exp-mart}
e^{-\left<\lambda,\psi^{(n)}\right>-\demi\left<\lambda, \widehat G^{(n)}
    \lambda\right>},
\end{align}
is a \(\fff_n\)-martingale. 

%\subsection{Relation with the abelian property of Theorem~\ref{thm_abelian}}

We can interpret the functions $\psi^{(n)}$ that appear above in terms of the S.D.E.s.
Consider \( X^{(n)}\)
the solution of the S.D.E. \hyperref[SDE0X]{$E^{W,\theta,\eta^{(n)}}_{V_n}$}, where $\eta^{(n)}$ is defined in \eqref{etan}. Denote by 
$T^{(n)}$ the associated stopping times and $\beta^{(n)}={1\over 2 T^{(n)}}$ and 
$$
K^{(n)}_{t\wedge T^{(n)}}=\operatorname{Id}_{V_n,V_n}-(t\wedge T^{(n)}) W_{V_n,V_n}, \;\;\; \psi^{(n)}(t)=\left(K^{(n)}_{t\wedge T^{(n)}}\right)^{-1}X^{(n)}(t),
$$
the associated operator and martingale that appear in Lemma~\ref{def-SDE0}. We always consider that $\psi^{(n)}$ is extended to the full set $V$ by $\psi^{(n)}_{V_n^c}(t)=\theta_{V_n^c}$. Considering \eqref{psi}, we have that
$$
\lim_{t\to\infty} \psi^{(n)}(t)=\psi^{(n)}.
$$
Hence the function $\psi^{(n)}$ appears as the limit of the continuous martingale $\psi^{(n)}(t)$.
 
It is possible to interpret the exponential martingale property \eqref{exp-mart} in terms of the Abelian properties, see Theorem~\ref{thm_abelian}.   
More precisely, conditionally on $\sigma(\beta_{V_n})$, it is possible to construct a continuous martingale that interpolates between
$\psi^{(n)}$ and $\psi^{(n+1)}$ and with total quadratic variation given by $\widehat G^{(n+1)}-\widehat G^{(n)}$, which explains the exponential martingale property as 
a consequence the standard exponential martingale property for continuous martingales. We do not give details of this computation which requires heavy notations (but the authors will provide details under request).

\bibliography{bibi-Bessel} \bibliographystyle{plain}

\end{document}